\newtheorem{theorem}{Theorem}[section]
\newtheorem{problem}[theorem]{Problem}
\newtheorem{remark}[theorem]{Remark}
\newtheorem{corollary}[theorem]{Corollary}
\newproof{pf}{Proof}
\newcommand{\RR}{{\if mm {\rm I}\mkern -3mu{\rm R}\else \leavevmode
\hbox{I}\kern -.17em\hbox{R} \fi}}
\newcommand{\bu}{\mbox{\boldmath{$u$}}}
\newcommand{\bt}{\mbox{\boldmath{$t$}}}
\newcommand{\bcero}{\mbox{\boldmath{$0$}}}
\newcommand{\bx}{\mbox{\boldmath{$\bx$}}}
\newcommand{\by}{\mbox{\boldmath{$\by$}}}
\newcommand{\bv}{\mbox{\boldmath{$v$}}}
\newcommand{\var}{\varepsilon}
\newcommand{\ba}{\mbox{\boldmath{$a$}}}
\newcommand{\fb}{{{f}}}
\newcommand{\bg}{\mbox{\boldmath{$g$}}}
\renewcommand{\by}{\mbox{\boldmath{$y$}}}
\renewcommand{\bx}{\mbox{\boldmath{$x$}}}
\newcommand{\be}{\mbox{\boldmath{$e$}}}
\newcommand{\bn}{\mbox{\boldmath{$n$}}}
\newcommand{\bh}{{{h}}}
\newcommand{\bbf}{\mbox{\boldmath{$f$}}}
\newcommand{\bbh}{\mbox{\boldmath{$h$}}}
\newcommand{\btheta}{\mbox{\boldmath{$\theta$}}}
\newcommand{\beeta}{\mbox{\boldmath{$\eta$}}}
\newcommand{\bxi}{\mbox{\boldmath{$\xi$}}}
\newcommand{\bTheta}{\mbox{\boldmath{$\Theta$}}}
\newcommand{\bUcal}{\mbox{\boldmath{$\mathcal{U}$}}}
\newcommand{\Gamae}{\Gamma^{\varepsilon }}
\renewcommand{\d}{\partial}
\newcommand{\eij}{e_{i||j}}
\newcommand{\ekl}{e_{k||l}}
\newcommand{\dekl}{\dot{e}_{k||l}}
\newcommand{\eab}{e_{\alpha||\beta}}
\newcommand{\est}{e_{\sigma||\tau}}
\newcommand{\dest}{\dot{e}_{\sigma||\tau}}
\newcommand{\estres}{e_{\sigma||3}}
\newcommand{\destres}{\dot{e}_{\sigma||3}}
\newcommand{\eatres}{e_{\alpha||3}}
\newcommand{\edtres}{e_{3||3}}
\newcommand{\deab}{\dot{e}_{\alpha||\beta}}
\newcommand{\deatres}{\dot{e}_{\alpha||3}}
\newcommand{\dedtres}{\dot{e}_{3||3}}
\newcommand{\gab}{\gamma_{\alpha\beta}}
\newcommand{\gst}{\gamma_{\sigma\tau}}
\newcommand{\rab}{\rho_{\alpha\beta}}
\newcommand{\rst}{\rho_{\sigma\tau}}
\renewcommand{\a}{a^{\alpha\beta\sigma\tau}}
\renewcommand{\b}{b^{\alpha\beta\sigma\tau}}
\renewcommand{\c}{c^{\alpha\beta\sigma\tau}}
\newcommand{\aeps}{a^{\alpha\beta\sigma\tau,\varepsilon}}
\newcommand{\beps}{b^{\alpha\beta\sigma\tau,\varepsilon}}
\newcommand{\ceps}{c^{\alpha\beta\sigma\tau,\varepsilon}}
\renewcommand{\ae}{\ a.e. \ t\in(0,T)}
\newcommand{\aes}{\ a.e. \ \textrm{in} \ (0,T)}
\newcommand{\forallt}{\ \forall  \ t\in[0,T]}
\newcommand{\Forallt}{\ \forall  \ t\in[0,T]}
\newcommand{\en}{ \ \textrm{in} \ }
\newcommand{\on}{ \ \textrm{on} \ }
\newcommand{\into}{\int_{\omega}}
\newcommand{\intO}{\int_{\Omega}}
\newcommand{\intG}{\int_{\Gamma_+\cup\Gamma_-}}
\newcommand{\ten}{(a^{\alpha \sigma}a^{\beta \tau} + a^{\alpha \tau}a^{\beta\sigma})}
\newcommand{\calQ}{ \mathcal{Q}}
\journal{Journal of gggg}
\begin{document}

\begin{frontmatter}

\title{Derivation of models for linear viscoelastic shells by using asymptotic analysis}

\author[compostela]{G.~Casti\~neira}
\ead{gonzalo.castineira@usc.es}
\author[corunha]{\'A. Rodr\'{\i}guez-Ar\'os}
\ead{angel.aros@udc.es}

\address[compostela]{Departamento de Matem\'atica Aplicada,
Univ. de Santiago de Compostela, Spain}
\address[corunha]{Departamento de M\'etodos Matem\'aticos
e Representaci\'on, Univ. da Coru\~na, Spain}


\begin{abstract}
We consider a family of linear viscoelastic shells with thickness $2\var$ (where $\var$ is a small parameter), clamped along a portion of their lateral face, all having the same middle surface $S$. We formulate the three-dimensional mechanical problem in curvilinear coordinates and provide existence and uniqueness of (weak) solution of the corresponding three-dimensional variational problem. 

We are interested in studying the limit behavior of both the three-dimensional problems and their solutions (displacements $\bu^\var$ of covariant components $u_i^\var$) when $\var$ tends to  zero. To do that, we use asymptotic analysis methods. First, we formulate the variational problem in a fixed domain  independent of $\var$. Then we assume an asymptotic expansion of the scaled displacements field $\bu(\var)=(u_i(\var))$. Identifying the terms of the proposed asymptotic expansion  we characterize the zeroth order term as the solution of a two-dimensional  scaled limit problem. Moreover, on one hand, we find that if the applied body force density is $O(1)$ with respect to $\var$ and surface tractions density is $O(\var)$, the limit of the field $\bu(\var)$ is the solution of a two-dimensional system of variational equations called viscoelastic membrane problem. On the other hand, if the applied body force density is $O(\var^2)$ and surface tractions density is $O(\var^3)$, the limit of the field $\bu(\var)$ is the solution of a different system of two-dimensional variational equations called viscoelastic flexural problem.

In both cases, we find a model which presents a long-term memory that takes into account the deformations at previous times. We finally comment on the existence and uniqueness of solution for the two-dimensional variational problems found and announce convergence results in forthcoming papers.
\end{abstract}

\begin{keyword}
Asymptotic Analysis\sep Viscoelasticity \sep Shells \sep Membrane \sep Flexural \sep Time dependent

34K25, 35O30, 35Q74, 34E05, 34E10, 41A60, 74K25, 74K15, 74D05, 35J15
\end{keyword}

\end{frontmatter}

\section{Introduction}\setcounter{equation}{0}

In solid mechanics, the obtention of  models for rods, beams,
plates and shells is based on {\it a priori} hypotheses on the
displacement and/or stress fields which, upon substitution in the three-dimensional
equilibrium and constitutive equations, lead to useful simplifications. Nevertheless, from both
constitutive and geometrical point of views, there is a need to
justify the validity of most of the models obtained in this way.

For this reason a considerable effort has been made in the past decades by many authors in order
to derive new models and justify the existing ones by
using the asymptotic expansion method, whose foundations can be
found in \cite{Lions}. Indeed, the first applied results were obtained with the justification of the linearized theory of plate bending in \cite{CD,Destuynder}.

The theories of beam bending and rod stretching also benefited from the extensive use of asymptotic methods and so the justification of the Bernoulli-Navier model for the bending-stretching of elastic thin rods was provided in \cite{BV}. In the following years, the nonlinear case was studied
in \cite{CGLRT2} and the analysis and error estimation of higher-order terms in the asymptotic
expansion of the scaled unknowns was given in \cite{iv}. In \cite{TraViano}, the authors use the asymptotic method to justify the Saint-Venant, Timoshenko and Vlassov models of elastic beams.

A description of the  mathematical models for the three-dimensional elasticity, including the nonlinear aspects, together with a mathematical analysis of these models, can be found in \cite{Ciarlet2}. A justification of the two-dimensional equations of a linear plate can be found in  \cite{CD}. An extensive review concerning  plate models can be found in \cite{Ciarlet3}, which also contains the justification of  the models by using asymptotic methods. The existence and uniqueness of solution of elliptic membrane shell equations, can be found in \cite{CiarletLods} and in \cite{CiarletLods2}.  These two-dimensional models are completely justified with convergence theorems. A complete theory regarding  elastic shells can be found in \cite{Ciarlet4b}, where models  for elliptic membranes, generalized membranes and flexural shells are presented. It contains a full description of the asymptotic procedure that leads to the corresponding sets of two-dimensional equations. Also, the dynamic case has been study in \cite{Limin_mem,Limin_flex,Limin_koit}, concerning the justification of dynamic equations for membrane, flexural and Koiter shells. More recently in \cite{ArosObs} the obstacle problem for an elastic elliptic membrane has been identified and justified as the limit problem for a family of unilateral contact problems for elastic elliptic shells.

A large number of real problems had made it necessary the study of new models which could take into account effects such as hardening and memory of the material. An example of these, are the  viscoelasticity models (see \cite{DL,LC1990,Pipkin}). Regarding the obtention and justification of viscoelastic models by using asymptotic expansion methods,  we find  several models for the bending-stretching of
viscoelastic rods in \cite{AV,AV2}. For a family of  shells made of a long-term memory viscoelastic material  we can find in  \cite{Viscoshells,Viscoshellsf,ViscoshellsK} the use of asymptotic analysis to justify with convergence results the limit two-dimensional membrane, flexural and Koiter equations.

In this work, we analyse the asymptotic behaviour of the scaled three-dimensional displacement field of a  shell made of a viscoelastic short-term memory  material (Kelvin-Voigt) as the thickness approaches zero. We consider that the displacements vanish in a portion of the lateral face of the shell, obtaining the equations of a viscoelastic membrane shell or of a viscoelastic flexural shell depending on the order of the forces and the geometry. 
We will follow the notation and style of \cite{Ciarlet4b}, where the linear elastic shells are studied.
For this reason, we shall  reference auxiliary results which apply in the same manner to the viscoelastic case. One of the major differences with respect to previous works in elasticity, consists on time dependence, that will lead to ordinary differential equations that need to be solved in order to find the zeroth-order approach of the solution. 
The structure of the paper is the following: in Section \ref{problema} 
we shall describe the mechanical problem in the original domain, while in Section \ref{seccion_dominio_ind} we will use a projection map into a reference domain, we will introduce the scaled unknowns and forces and the assumptions on coefficients. In Section \ref{preliminares} we recall some technical results which will be needed in what follows and moreover, we include  the theoretical results that support existence and uniqueness of solution for the problems presented in this paper.   In Section \ref{procedure} we show the asymptotic analysis leading to the formulation of the variational equations of the viscoelastic shells. In Section \ref{Existencia} we first recall the classification of the shells attending to its boundary conditions and the geometry of the middle surface $S$ and then, we study the existence and uniqueness of solution of the de-scaled problems derived from the asymptotic procedure.  In Section \ref{conclusiones} we shall present some conclusions, including a comparison between the viscoelastic models and the elastic case studied in \cite{Ciarlet4b} and announce the convergence results in forthcoming papers.

\section{The three-dimensional shell problem}\setcounter{equation}{0} \label{problema}

 We denote by $\mathbb{S}^d$, where $d=2,3$ in practice, the space of second-order symmetric tensors on $\mathbb{R}^d$, while \textquotedblleft$\ \cdot$ \textquotedblright will represent the inner product and $|\cdot|$  the usual norm in $\mathbb{S}^d$ and  $\mathbb{R}^d$. In  what follows, unless the contrary is explicitly written, we will use summation convention on repeated indices. Moreover, Latin indices $i,j,k,l,...$, take their values in the set $\{1,2,3\}$, whereas Greek indices $\alpha,\beta,\sigma,\tau,...$, do it in the set  $\{1,2\}$. Also, we use standard notation for the Lebesgue and Sobolev spaces. Also, for a time dependent function $u$, we denote $\dot{u}$ the first derivative of $u$ with respect to the time variable. 
 
 
 Let ${\Omega}^*$ be a domain of $\mathbb{R}^3$, with a Lipschitz-continuous boundary ${\Gamma^*}=\d{\Omega^*}$. Let ${\bx^*}=({x}_i^*)$ be a generic point of  its closure $\bar{\Omega}^*$ and let ${\d}^*_i$ denote the partial derivative with respect to ${x}_i^*$. Let $d\bx^*$ denote the volume element in $\Omega^*$,  $d\Gamma^*$ denote the area element along $\Gamma^*$ and  $\bn^*$ denote the unit outer normal vector along $\Gamma^*$. Finally, let $\Gamma^*_0$  and $\Gamma_1^*$ be subsets of $\Gamma^*$ such that $meas(\Gamma_0^*)>0$ and $\Gamma^*_0 \cap \Gamma_1^*=\emptyset.$ 
 
  The set $\Omega^*$ is the region occupied by a deformable body in the absence of applied forces. We assume that this body is made of a Kelvin-Voigt viscoelastic material, which is homogeneous and isotropic,  so that the material is characterized by its Lam\'e coefficients   $\lambda\geq0, \mu>0$ and its viscosity coefficients, $\theta\geq 0,\rho\geq 0$ (see for instance \cite{DL,LC1990,Shillor}).

 Let $T>0$ be the time period of observation. Under the effect of applied forces, the  body is deformed and we denote by $u_i^*:[0,T]\times \bar{\Omega}^*\rightarrow \mathbb{R}^3$ the Cartesian components of the displacements field, defined as $\bu^*:=u_i^* \be^{i}:[0,T]\times\bar{\Omega}^* \rightarrow \mathbb{R}^3$, where $\{\be^i\}$ denotes the Euclidean canonical basis in $\mathbb{R}^3$. 
 Moreover, we consider that the displacement field vanishes on the set $\Gamma^*_0$. Hence, the  displacements field $\bu^*=(u_i^*):[0,T]\times\Omega^*\longrightarrow \mathbb{R}^3$ is solution of the following three-dimensional problem in Cartesian coordinates.
 
 \begin{problem}\label{problema_mecanico}
 Find $\bu^*=(u_i^*):[0,T]\times\Omega^*\longrightarrow \mathbb{R}^3$ such that,
 \begin{align}\label{equilibrio}
 -\d_j^*\sigma^{ij,*}(\bu^*)&=f^{i,*} \en \Omega^*, \\\label{Dirichlet}
 u_i^*&=0 \on \Gamma^*_0, \\\label{Neumann}
 \sigma^{ij,*}(\bu^*)n_j^*&=h^{i,*} \on \Gamma_1^*,\\ \label{condicion_inicial} 
 \bu^*(0,\cdot)&=\bu_0^* \en \Omega^*,
 \end{align}
 where the functions
 \begin{align*}
 \sigma^{ij,*}(\bu^*):=A^{ijkl,*}e_{kl}^*(\bu^*)+ B^{ijkl,*}e_{kl}^*(\dot{\bu}^*),
 \end{align*}
 are the  components of the linearized stress tensor field and where the functions
  \begin{align*} 
  & A^{ijkl,*}:= \lambda \delta^{ij}\delta^{kl} + \mu\left(\delta^{ik}\delta^{jl} + \delta^{il}\delta^{jk}\right) , 
  \\ 
  & B^{ijkl,*}:= \theta \delta^{ij}\delta^{kl} + \frac{\rho}{2}\left(\delta^{ik}\delta^{jl} + \delta^{il}\delta^{jk}\right) , 
 \end{align*}
  are the  components of the three-dimensional elasticity and viscosity fourth order tensors, respectively, and 
 \begin{align*}
  e^*_{ij}(\bu^*):= \frac1{2}(\d^*_ju^*_{i}+ \d^*_iu^*_{j}),
 \end{align*}
 designates the  components of the linearized strain tensor associated with the displacement field $\bu^*$of the set $\bar{\Omega}^*$.
 \end{problem}
 We now proceed to describe the equations in Problem \ref{problema_mecanico}. Expression (\ref{equilibrio}) is the equilibrium equation, where $f^{i,*}$ are the  components of the volumic force densities. The equality (\ref{Dirichlet}) is the Dirichlet condition of place, (\ref{Neumann}) is the Neumann condition, where $h^{i,*}$ are the  components of surface force densities and (\ref{condicion_inicial}) is the initial condition, where $\bu_0^*$ denotes the initial  displacements.
 
  Note that, for the sake of briefness, we omit the explicit dependence on the space and time variables when there is no ambiguity. Let us define the space of admissible unknowns,
 \begin{align*} 
 V(\Omega^*)=\{\bv^*=(v_i^*)\in [H^1(\Omega^*)]^3; \bv^*=\mathbf{\bcero} \ on \ \Gamma_0^*  \}.
 \end{align*}
 Therefore, assuming enough regularity,  the unknown  $\bu^*=(u_i^*)$ satisfies the following variational problem in Cartesian coordinates:
\begin{problem}\label{problema_cartesian}
Find $\bu^*=(u_i^*):[0,T]\times {\Omega}^* \rightarrow \mathbb{R}^3$  such that, 
\begin{align*} 
  \displaystyle  \nonumber
  & \bu^*(t,\cdot)\in V(\Omega^*) \forallt,
  \\ \nonumber 
   &\int_{\Omega^*}A^{ijkl,*}e^*_{kl}(\bu^*(t))e^*_{ij}(\bv^*) dx^*+ \int_{\Omega^*} B^{ijkl,*}e^*_{kl}(\dot{\bu}^*(t))e_{ij}^*(\bv^*)   dx^*
  \\ 
 & \quad= \int_{\Omega^*} f^{i,*}(t) v_i^*  dx^* + \int_{\Gamma_1^*} h^{i,*}(t) v_i^*  d\Gamma^* \quad \forall \bv^*\in V(\Omega^*), \aes,
  \\\displaystyle 
  & \bu^*(0,\cdot)= \bu_0^*(\cdot).
\end{align*}
\end{problem} 

Let us consider that $\Omega^*$ is a viscoelastic shell of thickness $2\var$ and middle surface $S$. Now, we shall express the equations of the Problem \ref{problema_cartesian} in terms of  curvilinear coordinates. Let $\omega$ be a domain of $\mathbb{R}^2$, with a Lipschitz-continuous boundary $\gamma=\d\omega$. Let $\by=(y_\alpha)$ be a generic point of  its closure $\bar{\omega}$ and let $\d_\alpha$ denote the partial derivative with respect to $y_\alpha$. 

Let $\btheta\in\mathcal{C}^2(\bar{\omega};\mathbb{R}^3)$ be an injective mapping such that the two vectors $\ba_\alpha(\by):= \d_\alpha \btheta(\by)$ are linearly independent. These vectors form the covariant basis of the tangent plane to the surface $S:=\btheta(\bar{\omega})$ at the point $\btheta(\by)=\by^*.$ We can consider the two vectors $\ba^\alpha(\by)$ of the same tangent plane defined by the relations $\ba^\alpha(\by)\cdot \ba_\beta(\by)=\delta_\beta^\alpha$, that constitute the contravariant basis. We define the unit vector, 
\begin{align}\label{a_3}
\ba_3(\by)=\ba^3(\by):=\frac{\ba_1(\by)\wedge \ba_2(\by)}{| \ba_1(\by)\wedge \ba_2(\by)|},
\end{align} 
 normal vector to $S$ at the point $\btheta(\by)=\by^*$, where $\wedge$ denotes vector product in $\mathbb{R}^3.$ 

We can define the first fundamental form, given as metric tensor, in covariant or contravariant components, respectively, by
\begin{align*}
a_{\alpha\beta}:=\ba_\alpha\cdot \ba_\beta, \qquad a^{\alpha\beta}:=\ba^\alpha\cdot \ba^\beta,
\end{align*}
  the second fundamental form, given as curvature tensor, in covariant or mixed components, respectively, by
\begin{align*}
b_{\alpha\beta}:=\ba^3 \cdot \d_\beta \ba_\alpha, \qquad b_{\alpha}^\beta:=a^{\beta\sigma} b_{\sigma\alpha},
\end{align*}
and the Christoffel symbols of the surface $S$ by
\begin{align*}
\Gamma^\sigma_{\alpha\beta}:=\ba^\sigma\cdot \d_\beta \ba_\alpha.
\end{align*}

The area element along $S$ is $\sqrt{a}dy=dy^*$ where 
\begin{align}\label{definicion_a}
a:=\det (a_{\alpha\beta}).
\end{align}

Let $\gamma_0$ be a subset  of  $\gamma$, such that $meas (\gamma_0)>0$. 
For each $\varepsilon>0$, we define the three-dimensional domain $\Omega^\varepsilon:=\omega \times (-\varepsilon, \varepsilon)$ and  its boundary $\Gamae=\d\Omega^\var$. We also define  the following parts of the boundary, 
\begin{align*}
\Gamma^\varepsilon_+:=\omega\times \{\varepsilon\}, \quad \Gamma^\varepsilon_-:= \omega\times \{-\varepsilon\},\quad \Gamma_0^\varepsilon:=\gamma_0\times[-\varepsilon,\varepsilon].
\end{align*}

Let $\bx^\varepsilon=(x_i^\varepsilon)$ be a generic point of $\bar{\Omega}^\varepsilon$ and let $\d_i^\var$ denote the partial derivative with respect to $x_i^\varepsilon$. Note that $x_\alpha^\varepsilon=y_\alpha$ and $\d_\alpha^\varepsilon =\d_\alpha$. Let $\bTheta:\bar{\Omega}^\varepsilon\rightarrow \mathbb{R}^3$ be the mapping defined by
\begin{align} \label{bTheta}
\bTheta(\bx^\varepsilon):=\btheta(\by) + x_3^\varepsilon \ba_3(\by) \ \forall \bx^\varepsilon=(\by,x_3^\varepsilon)=(y_1,y_2,x_3^\varepsilon)\in\bar{\Omega}^\varepsilon.
\end{align}

The next theorem shows that if the injective mapping $\btheta:\bar{\omega}\rightarrow\mathbb{R}^3$ is smooth enough, the mapping $\bTheta:\bar{\Omega}^\var\rightarrow\mathbb{R}^3$ is also injective for $\var>0$ small enough (see Theorem 3.1-1, \cite{Ciarlet4b}).

\begin{theorem}\label{var_0}
Let $\omega$ be a domain in $\mathbb{R}^2$. Let $\btheta\in\mathcal{C}^2(\bar{\omega};\mathbb{R}^3)$ be an injective mapping such that the two vectors $\ba_\alpha=\d_\alpha\btheta$ are linearly independent at all points of $\bar{\omega}$ and let $\ba_3$,  defined  in (\ref{a_3}). Then there exists $\var_0>0$ such that   the mapping $\bTheta:\bar{\Omega}_0 \rightarrow\mathbb{R}^3$ defined by
\begin{align*}
\bTheta(\by,x_3):=\btheta(\by) + x_3 \ba_3(\by) \ \forall (\by,x_3)\in\bar{\Omega}_0, \ \textrm{where} \ \Omega_0:=\omega\times(-\var_0,\var_0),
\end{align*}
is a $\mathcal{C}^1-$ diffeomorphism from $\bar{\Omega}_0$ onto $\bTheta(\bar{\Omega}_0)$ and $\det (\bg_1,\bg_2,\bg_3)>0$ in $\bar{\Omega}_0$, where $\bg_i:=\d_i\bTheta$. 
\end{theorem}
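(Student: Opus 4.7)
The plan is to compute the vectors $\bg_i = \d_i \bTheta$ explicitly, verify that their determinant is strictly positive on $\bar\omega \times \{0\}$, and then use continuity/compactness to extend this to a slab $\bar{\Omega}_0 = \bar\omega \times [-\var_0, \var_0]$; global injectivity will then follow from a compactness argument.

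First I would compute, for $(\by, x_3)\in\bar{\Omega}_0$,
\begin{align*}
\bg_\alpha(\by, x_3) &= \d_\alpha \btheta(\by) + x_3\, \d_\alpha \ba_3(\by) = \ba_\alpha(\by) + x_3\, \d_\alpha \ba_3(\by),\\
\bg_3(\by, x_3) &= \ba_3(\by).
\end{align*}
Since $\btheta \in \mathcal{C}^2(\bar\omega;\mathbb{R}^3)$ and $|\ba_1\wedge\ba_2|>0$ on $\bar\omega$, the vector $\ba_3$ is of class $\mathcal{C}^1(\bar\omega;\mathbb{R}^3)$, and hence $\bg_i \in \mathcal{C}^0(\bar{\Omega}_0;\mathbb{R}^3)$. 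At $x_3 = 0$ we get $\det(\bg_1,\bg_2,\bg_3)(\by,0) = \det(\ba_1,\ba_2,\ba_3)(\by) = \sqrt{a(\by)} > 0$, by definition of $\ba_3$ and the linear independence assumption. By uniform continuity of $\det(\bg_1,\bg_2,\bg_3)$ on the compact set $\bar\omega \times [-\var,\var]$ and compactness of $\bar\omega$, there exists $\var_0 > 0$ such that $\det(\bg_1,\bg_2,\bg_3) > 0$ throughout $\bar{\Omega}_0 = \bar\omega \times [-\var_0,\var_0]$. This gives the sign assertion and, by the inverse function theorem, shows that $\bTheta$ is a local $\mathcal{C}^1$-diffeomorphism at every point of $\bar{\Omega}_0$ (possibly after shrinking $\var_0$ so that the same conclusion holds on an open neighborhood of $\bar{\Omega}_0$).

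The hard part will be global injectivity on $\bar{\Omega}_0$, since local injectivity alone is insufficient. I would argue by contradiction via a compactness argument: suppose no such $\var_0$ exists; then for each $n \in \mathbb{N}$ there exist distinct points $(\by_n, x_3^n), (\by_n', {x_3'}^n) \in \bar\omega \times [-1/n, 1/n]$ with $\bTheta(\by_n, x_3^n) = \bTheta(\by_n', {x_3'}^n)$. By compactness of $\bar\omega$, we extract subsequences (not relabeled) with $\by_n \to \by$, $\by_n' \to \by'$, and clearly $x_3^n, {x_3'}^n \to 0$. Passing to the limit and using continuity of $\bTheta$, I get $\btheta(\by) = \btheta(\by')$, and then by injectivity of $\btheta$ on $\bar\omega$, it follows that $\by = \by'$. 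Thus both sequences converge to $(\by, 0) \in \bar{\Omega}_0$, around which $\bTheta$ is a local diffeomorphism by the previous paragraph; this contradicts the assumed distinctness of $(\by_n,x_3^n)$ and $(\by_n',{x_3'}^n)$ for $n$ large.

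Combining the two steps yields injectivity of $\bTheta$ on $\bar{\Omega}_0$ together with $\det(\bg_1,\bg_2,\bg_3) > 0$ everywhere, so by the inverse function theorem $\bTheta^{-1}$ is of class $\mathcal{C}^1$ on $\bTheta(\bar{\Omega}_0)$, and $\bTheta$ is a $\mathcal{C}^1$-diffeomorphism onto its image, as required. I expect the delicate point to be the global injectivity step, where one must rule out "far-apart" collisions using the uniform closeness of the slab to the middle surface $S$ together with the injectivity of $\btheta$; the computation of $\bg_i$ and the Jacobian positivity are routine.
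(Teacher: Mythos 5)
Your proposal is correct and follows essentially the same route as the proof the paper relies on (it cites Theorem 3.1-1 of \cite{Ciarlet4b} rather than reproving it): positivity of $\det(\bg_1,\bg_2,\bg_3)$ at $x_3=0$ extended by continuity and compactness, followed by the contradiction argument with sequences in shrinking slabs, injectivity of $\btheta$, and local inversion near the common limit point. No gaps; your remark about extending to an open neighborhood of $\bar{\Omega}_0$ before invoking the inverse function theorem is exactly the minor care the classical proof also takes.
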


For each $\var$, $0<\var\le\var_0$, the set $\bTheta(\bar{\Omega}^\var)=\bar{\Omega}^*$ is the reference configuration of a viscoelastic shell, with middle surface $S=\btheta(\bar{\omega})$ and thickness $2\varepsilon>0$.
Furthermore for $\varepsilon>0,$ $\bg_i^\varepsilon(\bx^\varepsilon):=\d_i^\varepsilon\bTheta(\bx^\varepsilon)$ are linearly independent and the mapping $\bTheta:\bar{\Omega}^\varepsilon\rightarrow \mathbb{R}^3$ is injective for all $\var$, $0<\var\le\var_0$, as a consequence of injectivity of the mapping $\btheta$. Hence, the three vectors $\bg_i^\varepsilon(\bx^\varepsilon)$ form the covariant basis of the tangent space at the point $\bx^*=\bTheta(\bx^\varepsilon)$ and $\bg^{i,\varepsilon}(\bx^\varepsilon) $ defined by the relations $\bg^{i,\varepsilon}\cdot \bg_j^\varepsilon=\delta_j^i$ form the contravariant basis at the point $\bx^*=\bTheta(\bx^\varepsilon)$. We define the metric tensor, in covariant or contravariant components, respectively, by
\begin{align*}
 g_{ij}^\varepsilon:=\bg_i^\varepsilon \cdot \bg_j^\varepsilon,\quad g^{ij,\varepsilon}:=\bg^{i,\varepsilon} \cdot \bg^{j,\varepsilon},
\end{align*}
and Christoffel symbols by
\begin{align} \label{simbolos3D}
\Gamma^{p,\varepsilon}_{ij}:=\bg^{p,\varepsilon}\cdot\d_i^\varepsilon \bg_j^\varepsilon. 
\end{align}

The volume element in the set $\bTheta(\bar{\Omega}^\varepsilon)=\bar{\Omega}^*$ is $\sqrt{g^\varepsilon}dx^\var=dx^*$ and the surface element in $\bTheta(\Gamma^\varepsilon)=\Gamma^*$ is $\sqrt{g^\varepsilon}d\Gamae =d\Gamma^*$  where
\begin{align} \label{g}
g^\varepsilon:=\det (g^\varepsilon_{ij}).
\end{align} 
Therefore, for a field ${\bv}^*$ defined in $\bTheta(\bar{\Omega}^\var)=\bar{\Omega}^*$, we define its covariant curvilinear coordinates $v_i^\var$ by
\begin{equation*}
{\bv}^*({\bx}^*)={v}^*_i({\bx}^*){\be}^i=:v_i^\var(\bx^\var)\bg^i(\bx^\var),\ {\rm with}\ {\bx}^*=\bTheta(\bx^\var).
\end{equation*}


Besides, we denote by $u_i^\varepsilon:[0,T]\times \bar{\Omega}^\varepsilon \rightarrow \mathbb{R}^3$ the covariant components of the displacements field, that is  $\bUcal^\var:=u_i^\varepsilon \bg^{i,\varepsilon}:[0,T]\times\bar{\Omega}^\varepsilon \rightarrow \mathbb{R}^3$ . For simplicity, we define the vector field $\bu^\varepsilon=(u_i^\varepsilon):[0,T]\times {\Omega}^\varepsilon \rightarrow \mathbb{R}^3$ which will be denoted vector of unknowns.

Recall that we assumed that the shell is subjected to a boundary condition of place; in particular that the displacements field vanishes in a portion of the lateral face of the shell, that is, $\bTheta(\Gamma_0^\varepsilon)=\Gamma_0^*$.

Accordingly, let us define the space of admissible unknowns,
\begin{align*} 
V(\Omega^\varepsilon)=\{\bv^\varepsilon=(v_i^\varepsilon)\in [H^1(\Omega^\varepsilon)]^3; \bv^\varepsilon=\mathbf{\bcero} \ on \ \Gamma_0^\varepsilon  \}.
\end{align*}

This is a real Hilbert space with the induced inner product of $[H^1(\Omega^\var)]^3$. The corresponding norm  is denoted by $||\cdot||_{1,\Omega^\var}$. 

Therefore, we can find the expression of the Problem \ref{problema_cartesian} in curvilinear coordinates (see \cite{Ciarlet4b} for details). Hence, the `` displacements " field $\bu^\var=(u_i^\var)$ verifies the following variational problem of a three-dimensional viscoelastic shell in curvilinear coordinates:

\begin{problem}\label{problema_eps}
Find $\bu^\varepsilon=(u_i^\varepsilon):[0,T]\times {\Omega}^\varepsilon \rightarrow \mathbb{R}^3$  such that, 
\begin{align} 
  \displaystyle  \nonumber
  & \bu^\varepsilon(t,\cdot)\in V(\Omega^\varepsilon) \forallt,
  \\ \nonumber 
   &\int_{\Omega^\varepsilon}A^{ijkl,\varepsilon}e^\varepsilon_{k||l}(\bu^\varepsilon(t))e^\varepsilon_{i||j}(\bv^\varepsilon)\sqrt{g^\varepsilon} dx^\varepsilon+ \int_{\Omega^\varepsilon} B^{ijkl,\varepsilon}e^\varepsilon_{k||l}(\dot{\bu}^\varepsilon(t))e_{i||j}^\var(\bv^\varepsilon) \sqrt{g^\varepsilon}  dx^\varepsilon
  \\ \label{Pbvariacionaleps}
 & \quad= \int_{\Omega^\varepsilon} f^{i,\varepsilon}(t) v_i^\varepsilon \sqrt{g^\varepsilon} dx^\varepsilon + \int_{\Gamma_+^\varepsilon\cup\Gamma_-^\varepsilon} h^{i,\varepsilon}(t) v_i^\varepsilon\sqrt{g^\varepsilon}  d\Gamma^\varepsilon  \quad \forall \bv^\varepsilon\in V(\Omega^\varepsilon), \aes,
  \\\displaystyle \nonumber
  & \bu^\varepsilon(0,\cdot)= \bu_0^\varepsilon(\cdot),
\end{align}
\end{problem}
 where the functions
  \begin{align}\label{TensorAeps}
  & A^{ijkl,\varepsilon}:= \lambda g^{ij,\varepsilon}g^{kl,\varepsilon} + \mu(g^{ik,\varepsilon}g^{jl,\varepsilon} + g^{il,\varepsilon}g^{jk,\varepsilon} ), 
  \\ \label{TensorBeps}
  & B^{ijkl,\varepsilon}:= \theta g^{ij,\varepsilon}g^{kl,\varepsilon} + \frac{\rho}{2}(g^{ik,\varepsilon}g^{jl,\varepsilon} + g^{il,\varepsilon}g^{jk,\varepsilon} ), 
 \end{align}
  are the contravariant components of the three-dimensional elasticity and viscosity tensors, respectively. We assume that the Lam\'e coefficients   $\lambda\geq0, \mu>0$ and the viscosity coefficients $\theta\geq 0,\rho\geq 0$  are all independent of $\var$. Moreover, the terms
 \begin{align*}
  e^\varepsilon_{i||j}(\bu^\var):= \frac1{2}(u^\varepsilon_{i||j}+ u^\varepsilon_{j||i})=\frac1{2}(\d^\varepsilon_ju^\varepsilon_i + \d^\varepsilon_iu^\varepsilon_j) - \Gamma^{p,\varepsilon}_{ij}u^\varepsilon_p,
 \end{align*}
 designate the covariant components of the linearized strain tensor associated with the displacement field $\bUcal^\var$of the set $\bTheta(\bar{\Omega}^\varepsilon)$.  Moreover, $f^{i,\var}$ denotes the contravariant components of the volumic force densities, $h^{i,\var}$ denotes contravariant components of surface force densities and $\bu_0^\var$ denotes the initial `` displacements " (actually, the initial displacement is $\bUcal_0^\var:=(u_0^\var)_i\bg^{i,\var}$).

 Note that the following additional relations are satisfied,
 \begin{align}\nonumber
 \Gamma^{3,\varepsilon}_{\alpha 3}=\Gamma^{p,\varepsilon}_{33}&=0  \ \textrm{in} \ \bar{\Omega}^\varepsilon, \\
\label{tensor_terminos_nulos}
 A^{\alpha\beta\sigma 3,\varepsilon}=A^{\alpha 333,\varepsilon}=B^{\alpha\beta\sigma 3 , \varepsilon}&=B^{\alpha 333, \varepsilon}=0 \ \textrm{in} \ \bar{\Omega}^\varepsilon,
 \end{align}
 as a consequence of the definition of $\bTheta$ in (\ref{bTheta}).  The definitions of the fourth order tensors (\ref{TensorAeps}) and (\ref{TensorBeps}), imply that (see Theorem 1.8-1, \cite{Ciarlet4b}) for $\var>0$ small enough, there exist two constants $C_e>0$ and $C_v>0$, independent of $\var$, such that,
  \begin{align} \label{elipticidadA}
  \sum_{i,j}|t_{ij}|^2\leq C_e A^{ijkl,\var}(\bx^\var)t_{kl}t_{ij},\\\label{elipticidadB}
  \sum_{i,j}|t_{ij}|^2\leq C_v B^{ijkl,\var}(\bx^\var)t_{kl}t_{ij},
  \end{align}
 for all $\bx^\var\in\bar{\Omega}^\var$ and all $\bt=(t_{ij})\in\mathbb{S}^2$.
 
 \begin{remark}
 Note that the proof for the scaled viscosity tensor $\left(B^{ijkl,\var}\right)$ would follow the steps of the proof for the elasticity tensor $\left(A^{ijkl,\var} \right)$ in Theorem 1.8-1, \cite{Ciarlet4b}, since from a quality point of view their expressions differ in replacing the Lam\'e constants by the two viscosity coefficients. 
 \end{remark}

The proof that Problem \ref{problema_eps} has a unique  solution for $\var>0$ small enough is left to Section \ref{preliminares} (see Theorem \ref{Thexistunic}).

\section{The scaled three-dimensional shell problem}\setcounter{equation}{0} \label{seccion_dominio_ind}

For convenience, we consider a reference domain independent of the small parameter $\var$. Hence, let us define the three-dimensional domain $\Omega:=\omega \times (-1, 1) $ and  its boundary $\Gamma=\d\Omega$. We also define the following parts of the boundary,
 \begin{align*}
 \Gamma_+:=\omega\times \{1\}, \quad \Gamma_-:= \omega\times \{-1\},\quad \Gamma_0:=\gamma_0\times[-1,1].
 \end{align*}
 Let $\bx=(x_1,x_2,x_3)$ be a generic point in $\bar{\Omega}$ and we consider the notation $\d_i$ for the partial derivative with respect to $x_i$. We define the following projection map, 
 \begin{align*}
 \pi^\varepsilon:\bx=(x_1,x_2,x_3)\in \bar{\Omega} \longrightarrow \pi^\varepsilon(\bx)=\bx^\varepsilon=(x_i^\varepsilon)=(x_1^\var,x_2^\var,x_3^\var)=(x_1,x_2,\varepsilon x_3)\in \bar{\Omega}^\varepsilon,
 \end{align*}
 hence, $\d_\alpha^\varepsilon=\d_\alpha $  and $\d_3^\varepsilon=\frac1{\varepsilon}\d_3$. We consider the scaled unknown $\bu(\varepsilon)=(u_i(\varepsilon)):[0,T]\times \bar{\Omega}\longrightarrow \mathbb{R}^3$ and the scaled vector fields $\bv=(v_i):\bar{\Omega}\longrightarrow \mathbb{R}^3 $ defined as
 \begin{align*}
 u_i^\varepsilon(t,\bx^\varepsilon)=:u_i(\varepsilon)(t,\bx) \ \textrm{and} \ v_i^\varepsilon(\bx^\varepsilon)=:v_i(\bx) \ \forall \bx^\varepsilon=\pi^\varepsilon(\bx)\in \bar{\Omega}^\varepsilon, \ \forall \ t\in[0,T].
 \end{align*}

We remind that, by hypothesis, the Lam\'e and viscosity constants are independent of $\varepsilon$. Also, let the functions, $\Gamma_{ij}^{p,\varepsilon}, g^\varepsilon, A^{ijkl,\varepsilon}, B^{ijkl,\varepsilon}$ defined in (\ref{simbolos3D}), (\ref{g}), (\ref{TensorAeps}) and (\ref{TensorBeps}), be associated with the functions $\Gamma_{ij}^p(\varepsilon), g(\varepsilon), A^{ijkl}(\varepsilon), B^{ijkl}(\varepsilon)$ defined by
  \begin{align} \label{escalado_simbolos}
  &\Gamma_{ij}^p(\varepsilon)(\bx):=\Gamma_{ij}^{p,\varepsilon}(\bx^\varepsilon),\\\label{escalado_g}
  & g(\varepsilon)(\bx):=g^\varepsilon(\bx^\varepsilon),\\\label{tensorA_escalado}
  & A^{ijkl}(\varepsilon)(\bx):=A^{ijkl,\varepsilon}(\bx^\varepsilon),\\\label{tensorB_escalado}
  & B^{ijkl}(\varepsilon)(\bx):=B^{ijkl,\varepsilon}(\bx^\varepsilon),
  \end{align}
 for all $\bx^\varepsilon=\pi^\varepsilon(\bx)\in\bar{\Omega}^\varepsilon$. For all $\bv=(v_i)\in [H^1(\Omega)]^3$, let there be associated the scaled linearized strains $(\eij(\var)(\bv))\in L^2(\Omega)$, defined by
\begin{align*} 
&\eab(\varepsilon;\bv):=\frac{1}{2}(\d_\beta v_\alpha + \d_\alpha v_\beta) - \Gamma_{\alpha\beta}^p(\varepsilon)v_p,\\ 
  & \eatres(\varepsilon;\bv):=\frac{1}{2}(\frac{1}{\var}\d_3 v_\alpha + \d_\alpha v_3) - \Gamma_{\alpha 3}^p(\varepsilon)v_p,\\ 
  & \edtres(\varepsilon;\bv):=\frac1{\varepsilon}\d_3v_3.
\end{align*}
Note that with these definitions it is verified that
\begin{align*}
\eij^\var(\bv^\var)(\pi^\var(\bx))=\eij(\var;\bv)(\bx) \ \forall\bx\in\Omega.
\end{align*}

 \begin{remark} The functions $\Gamma_{ij}^p(\varepsilon), g(\varepsilon), A^{ijkl}(\varepsilon), B^{ijkl}(\varepsilon)$ converge in $\mathcal{C}^0(\bar{\Omega})$ when $\varepsilon$ tends to zero.
 \end{remark}
 
 \begin{remark}When we consider
 $\varepsilon=0$ the functions will be defined with respect to $\by\in\bar{\omega}$. We shall distinguish the three-dimensional Christoffel symbols from the two-dimensional ones by using  $\Gamma_{\alpha \beta}^\sigma(\varepsilon)$ and $ \Gamma_{\alpha\beta}^\sigma$, respectively.
 \end{remark}

The next result is an adaptation of $(b)$ in Theorem 3.3-2, \cite{Ciarlet4b} to the viscoelastic case.  We will study the asymptotic behavior of the scaled contravariant components $A^{ijkl}(\var), B^{ijkl}(\var)$ of the three-dimensional elasticity and viscosity tensors defined in (\ref{tensorA_escalado})--(\ref{tensorB_escalado}), as $\var\rightarrow0$.  We show their uniform positive definiteness  not only with respect to $\bx\in\bar{\Omega}$, but also with respect to $\var$, $0<\var\leq\var_0$. Finally, their limits are functions of $\by\in\bar{\omega}$ only, that is, independent of the transversal variable $x_3$.

\begin{theorem} \label{Th_comportamiento asintotico}
Let $\omega$  be a domain in $\mathbb{R}^2$ and let $\btheta\in\mathcal{C}^2(\bar{\omega};\mathbb{R}^3)$ be an injective mapping such that the two vectors $\ba_\alpha=\d_\alpha\btheta$ are linearly independent at all points of $\bar{\omega}$, let $a^{\alpha\beta}$ denote the contravariant components of the metric tensor of $S=\btheta(\bar{\omega})$. In addition to that, let the other assumptions on the mapping $\btheta$ and the definition of $\var_0$ be as in Theorem \ref{var_0}. The contravariant components $A^{ijkl}(\var), B^{ijkl}(\var)$ of the scaled three-dimensional elasticity and viscosity tensors, respectively, defined in (\ref{tensorA_escalado})--(\ref{tensorB_escalado}) satisfy
\begin{align*}
A^{ijkl}(\var)= A^{ijkl}(0) + O(\var) \ \textrm{and} \ A^{\alpha\beta\sigma 3}(\var)=A^{\alpha 3 3 3}(\var)=0, \\
B^{ijkl}(\var)= B^{ijkl}(0) + O(\var) \ \textrm{and} \ B^{\alpha\beta\sigma 3}(\var)=B^{\alpha 3 3 3}(\var)=0 ,
\end{align*}
for all $\var$, $0<\var \leq \var_0$, 
 and
\begin{align*}
A^{\alpha\beta\sigma\tau}(0)&= \lambda a^{\alpha\beta}a^{\sigma\tau} + \mu(a^{\alpha\sigma}a^{\beta\tau} + a^{\alpha\tau}a^{\beta\sigma}), & A^{\alpha\beta 3 3}(0)&= \lambda a^{\alpha\beta},
\\
 A^{\alpha 3\sigma 3}(0)&=\mu a^{\alpha\sigma} ,& A^{33 3 3}(0)&= \lambda + 2\mu,
 \\
A^{\alpha\beta\sigma 3}(0) &=A^{\alpha 333}(0)=0,
\\
B^{\alpha\beta\sigma\tau}(0)&= \theta a^{\alpha\beta}a^{\sigma\tau} + \frac{\rho}{2}(a^{\alpha\sigma}a^{\beta\tau} + a^{\alpha\tau}a^{\beta\sigma}),& B^{\alpha\beta 3 3}(0)&= \theta a^{\alpha\beta},
\\
 B^{\alpha 3\sigma 3}(0)&=\frac{\rho}{2} a^{\alpha\sigma} ,& B^{33 3 3}(0)&= \theta + \rho, 
\\
B^{\alpha\beta\sigma 3}(0) &=B^{\alpha 333}(0)=0.
\end{align*}

 Moreover, there exist two constants $C_e>0$ and $C_v>0$, independent of the variables and $\var$, such that 
  \begin{align} \label{elipticidadA_eps}
  \sum_{i,j}|t_{ij}|^2\leq C_e A^{ijkl}(\varepsilon)(\bx)t_{kl}t_{ij},\\\label{elipticidadB_eps}
  \sum_{i,j}|t_{ij}|^2 \leq C_v B^{ijkl}(\varepsilon)(\bx)t_{kl}t_{ij},
  \end{align}
 for all $\var$, $0<\var\leq\var_0$, for all $\bx\in\bar{\Omega}$ and all $\bt=(t_{ij})\in\mathbb{S}^2$.
\end{theorem}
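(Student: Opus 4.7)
The plan is to reduce everything to the known asymptotic behavior of the scaled contravariant metric components $g^{ij}(\varepsilon)$ and then exploit the structural analogy between the fourth-order tensors $A^{ijkl}$ and $B^{ijkl}$. Concretely, from Theorem 3.3-2(a) in \cite{Ciarlet4b} (applied to our $\bTheta$) one has, for $0<\varepsilon\le\varepsilon_0$,
\begin{align*}
g^{\alpha\beta}(\varepsilon)=a^{\alpha\beta}+O(\varepsilon),\qquad g^{\alpha 3}(\varepsilon)=0,\qquad g^{33}(\varepsilon)=1,
\end{align*}
uniformly on $\bar{\Omega}$. Substituting these expansions into the definitions \eqref{TensorAeps}--\eqref{TensorBeps} transported to the fixed domain via \eqref{tensorA_escalado}--\eqref{tensorB_escalado} immediately gives $A^{ijkl}(\varepsilon)=A^{ijkl}(0)+O(\varepsilon)$ and $B^{ijkl}(\varepsilon)=B^{ijkl}(0)+O(\varepsilon)$, uniformly on $\bar\Omega$. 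The explicit formulas for $A^{ijkl}(0)$ and $B^{ijkl}(0)$ then follow by direct substitution of $g^{\alpha\beta}(0)=a^{\alpha\beta}$, $g^{\alpha 3}(0)=0$, $g^{33}(0)=1$; for instance, for $B^{\alpha 3\sigma 3}(0)$ only the terms with $g^{ik}g^{jl}$ and $g^{il}g^{jk}$ survive and each contributes $\tfrac12\rho a^{\alpha\sigma}$, giving $\tfrac{\rho}{2}a^{\alpha\sigma}$, and analogously for the remaining combinations.

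The vanishing of the components $A^{\alpha\beta\sigma 3}(\varepsilon)$, $A^{\alpha 333}(\varepsilon)$, $B^{\alpha\beta\sigma 3}(\varepsilon)$, $B^{\alpha 333}(\varepsilon)$ for all $\varepsilon$ is a direct consequence of $g^{\alpha 3}(\varepsilon)=0$: every monomial in the definitions \eqref{TensorAeps}--\eqref{TensorBeps} with an odd number of the index $3$ among any of the pairs $(i,j)$, $(k,l)$, etc., necessarily contains a factor $g^{\alpha 3}(\varepsilon)$ and therefore vanishes. This is just the scaled counterpart of \eqref{tensor_terminos_nulos}.

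For the uniform ellipticity \eqref{elipticidadA_eps}--\eqref{elipticidadB_eps}, I would reproduce, essentially verbatim, the argument of Theorem 1.8-1 in \cite{Ciarlet4b} for the elasticity tensor and observe that the same argument applies to $B^{ijkl}(\varepsilon)$ because its expression differs from that of $A^{ijkl}(\varepsilon)$ only in the replacement $(\lambda,\mu)\leftrightarrow(\theta,\rho/2)$, which preserves the structure $\lambda_0 g^{ij}g^{kl}+\mu_0(g^{ik}g^{jl}+g^{il}g^{jk})$ with $\lambda_0\ge 0$ and $\mu_0>0$ (note $\rho>0$ is needed; if one only assumes $\rho\ge 0$, the constant $C_v$ may fail to exist, so the statement tacitly requires $\rho>0$, as is standard for a Kelvin--Voigt material). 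At $\varepsilon=0$, the limit tensor $B^{ijkl}(0)$ acts on symmetric tensors as $\theta(a^{\alpha\beta}t_{\alpha\beta}+t_{33})(a^{\sigma\tau}t_{\sigma\tau}+t_{33})+\rho(a^{\alpha\sigma}a^{\beta\tau}t_{\alpha\beta}t_{\sigma\tau}+2a^{\alpha\sigma}t_{\alpha 3}t_{\sigma 3}+t_{33}^2)$, which, by the uniform positive definiteness of $(a^{\alpha\beta})$ on $\bar\omega$ (consequence of the hypotheses on $\btheta$), is bounded below by a positive multiple of $\sum|t_{ij}|^2$. A uniform-in-$\varepsilon$ bound then follows from the uniform convergence $B^{ijkl}(\varepsilon)\to B^{ijkl}(0)$ in $\mathcal{C}^0(\bar\Omega)$, possibly after shrinking $\varepsilon_0$; the same reasoning applies to $A^{ijkl}(\varepsilon)$.

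The main technical obstacle is the uniform ellipticity step, because the positivity constant must be made independent of both $\bx\in\bar\Omega$ and $\varepsilon\in(0,\varepsilon_0]$. The clean way to get this is the two-step argument just sketched: prove ellipticity of the limit tensor at $\varepsilon=0$ using the ellipticity of $(a^{\alpha\beta})$, and then invoke the uniform $O(\varepsilon)$ convergence to transfer the bound to small $\varepsilon>0$. All the remaining parts are routine substitutions guided by the parallel treatment already developed for the elastic tensor in \cite{Ciarlet4b}, so the \textbf{Remark} that follows the theorem in the paper effectively captures this reduction.
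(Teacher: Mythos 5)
Your proposal is correct and takes essentially the same route as the paper, whose ``proof'' consists precisely of adapting Theorem 3.3-2(b) of \cite{Ciarlet4b}: substitute the asymptotics $g^{\alpha\beta}(\varepsilon)=a^{\alpha\beta}+O(\varepsilon)$, $g^{\alpha 3}(\varepsilon)=0$, $g^{33}(\varepsilon)=1$ into the definitions (\ref{tensorA_escalado})--(\ref{tensorB_escalado}), with the viscosity tensor handled by the formal replacement $(\lambda,\mu)\to(\theta,\rho/2)$ noted in the Remark following the theorem. The only minor deviations are in the uniform ellipticity step, where you transfer the bound from the $\varepsilon=0$ limit by uniform convergence (possibly shrinking $\varepsilon_0$, whereas the cited proof obtains the constant for all $0<\varepsilon\leq\varepsilon_0$ directly from the uniform positive definiteness of $(g^{ij}(\varepsilon))$ on the compact set $\bar{\omega}\times[-\varepsilon_0,\varepsilon_0]$), and in your correct observation that (\ref{elipticidadB_eps}) tacitly requires $\rho>0$, which is consistent with the assumption $\mu,\theta,\rho>0$ made later in Section \ref{Existencia}.
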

 
 \begin{remark}
 Note that the proof for the scaled viscosity tensor $\left(B^{ijkl}(\varepsilon)\right)$ would follow the steps of the proof for the elasticity tensor $\left(A^{ijkl}(\var)\right)$ in Theorem 3.3-2, \cite{Ciarlet4b}, since from a quality point of view their expressions differ in replacing the Lam\'e constants by the two viscosity coefficients. 
 \end{remark}
 
\begin{remark}
The asymptotic behavior of $g(\var)$ and the contravariant components of elasticity and viscosity tensors, $A^{ijkl}(\var)$, $B^{ijkl}(\var)$ also implies that
\begin{align} \label{tensorA_tildes}
A^{ijkl}(\var)\sqrt{g(\var)}= A^{ijkl}(0)\sqrt{a} + \var \tilde{A}^{ijkl,1} + \var^2 \tilde{A}^{ijkl,2} + o(\var^2), \\ \label{tensorB_tildes}
B^{ijkl}(\var)\sqrt{g(\var)}= B^{ijkl}(0)\sqrt{a} + \var \tilde{B}^{ijkl,1} + \var^2 \tilde{B}^{ijkl,2} + o(\var^2),
\end{align}
for certain regular contravariant components $\tilde{A}^{ijkl,\alpha}, \tilde{B}^{ijkl,\alpha}$ of certain tensors.
\end{remark}

 Let the scaled applied forces $\bbf(\varepsilon):[0,T]\times \Omega\longrightarrow \mathbb{R}^3$ and  $\bbh(\varepsilon):[0,T]\times (\Gamma_+\cup\Gamma_-)\longrightarrow \mathbb{R}^3$ be defined by
   \begin{align*}
  \bbf^\var&=(f^{i,\varepsilon})(t,\bx^\varepsilon)=:\bbf(\var)= (f^i(\varepsilon))(t,\bx) 
  \\ \nonumber
  &\forall \bx\in\Omega, \ \textrm{where} \ \bx^\varepsilon=\pi^\varepsilon(\bx)\in \Omega^\varepsilon \ \textrm{and} \ \forall t\in[0,T], \\ 
   \bbh^\var&=(h^{i,\varepsilon})(t,\bx^\varepsilon)=:\bbh(\var)= (h^i(\varepsilon))(t,\bx) 
   \\ \nonumber 
   &\forall \bx\in\Gamma_+\cup\Gamma_-, \ \textrm{where} \ \bx^\varepsilon=\pi^\varepsilon(\bx)\in \Gamma_+^\varepsilon\cup\Gamma_-^\varepsilon \ \textrm{and} \ \forall t\in[0,T].
   \end{align*}
   Also, we introduce $\bu_0(\var): \Omega \longrightarrow \mathbb{R}^3$ as
   \begin{align*}
   \bu_0(\var)(\bx):=\bu_0^\var(\bx^\var) \ \forall \bx\in\Omega, \ \textrm{where} \ \bx^\varepsilon=\pi^\varepsilon(\bx)\in \Omega^\varepsilon,
   \end{align*}
   and define the space
   \begin{align*} 
   V(\Omega)=\{\bv=(v_i)\in [H^1(\Omega)]^3; \bv=\mathbf{0} \ on \ \Gamma_0\},
   \end{align*}
  which is a Hilbert space, with associated norm denoted by $||\cdot||_{1,\Omega}$.

   The scaled variational problem  can then  be written as follows:
  \begin{problem}\label{problema_escalado}
  Find $\bu(\varepsilon):[0,T]\times \Omega\longrightarrow \mathbb{R}^3$ such that,  
  \begin{align}\nonumber 
  &\bu(\varepsilon)(t,\cdot)\in V(\Omega) \forallt, \\ \nonumber
      &\int_{\Omega}A^{ijkl}(\varepsilon)e_{k||l}(\varepsilon;\bu(\varepsilon))e_{i||j}(\varepsilon;\bv)\sqrt{g(\varepsilon)} dx
   + \int_{\Omega} B^{ijkl}(\varepsilon)e_{k||l}(\varepsilon;\dot{\bu}(\varepsilon))e_{i||j}(\varepsilon;\bv) \sqrt{g(\varepsilon)}  dx
     \\ \label{ec_problema_escalado}
     & \quad= \int_{\Omega} \fb^{i}(\varepsilon) v_i \sqrt{g(\varepsilon)} dx + \frac{1}{\varepsilon}\int_{\Gamma_+\cup\Gamma_-} \bh^{i}(\varepsilon) v_i\sqrt{g(\varepsilon)}  d\Gamma  \quad \forall \bv\in V(\Omega), \aes,
     \\\displaystyle \nonumber
     & \bu(\var)(0,\cdot)= \bu_0(\var)(\cdot).
    \end{align} 
  \end{problem}

\begin{remark}
Note that the order of the applied forces has not been determined yet.
\end{remark} 

The proof that Problem \ref{problema_escalado} has a unique  solution  is left to Section \ref{preliminares} (see Theorem \ref{Theorema_exist_escalado_sin_orden}).

\section{Technical preliminaries}\setcounter{equation}{0} \label{preliminares}

Concerning geometrical and mechanical preliminaries, we shall present some theorems, which will be used in the following sections. Then, we show some new results related with the existence and uniqueness of solution of the problems presented in this paper. First, we recall the Theorem 3.3-1, \cite{Ciarlet4b}.

\begin{theorem} \label{Th_simbolos2D_3D}
Let $\omega$ be a domain in $\mathbb{R}^2$, let $\btheta\in\mathcal{C}^3(\bar{\omega};\mathcal{R}^3)$ be an injective mapping such that the two vectors $\ba_\alpha=\d_\alpha\btheta$ are linearly independent at all points of $\bar{\omega}$ and let $\var_0>0$ be as in Theorem \ref{var_0}. The functions $\Gamma^p_{ij}(\var)=\Gamma^p_{ji}(\var)$ and $g(\var)$ are defined in (\ref{escalado_simbolos})--(\ref{escalado_g}), the functions $b_{\alpha\beta}, b_\alpha^\sigma, \Gamma_{\alpha\beta}^\sigma,a$, are defined in Section \ref{problema} and the covariant derivatives $b_\beta^\sigma|_\alpha$ are defined by
\begin{align} \label{b_barra}
b_\beta^\sigma|_\alpha:=\d_\alpha b_\beta^\sigma +\Gamma^\sigma_{\alpha\tau}b_\beta^\tau - \Gamma^\tau_{\alpha\beta}b^\sigma_\tau.
\end{align}
The functions $b_{\alpha\beta}, b_\alpha^\sigma, \Gamma_{\alpha\beta}^\sigma, b_\beta^\sigma|_\alpha$ and $a$ are identified with functions in $\mathcal{C}^0(\bar{\Omega})$. Then
\begin{align*}
\begin{aligned}[c]
 \Gamma_{\alpha\beta}^\sigma(\var)&=  \Gamma_{\alpha\beta}^\sigma -\var x_3b_\beta^\sigma|_\alpha + O(\var^2), \\
  \d_3 \Gamma_{\alpha\beta}^p(\var)&= O(\var), 
   \\
   \Gamma_{\alpha3}^3(\var)&=\Gamma_{33}^p(\var)=0,
\end{aligned}
\qquad
\begin{aligned}[c]
 \Gamma_{\alpha\beta}^3(\var)&=b_{\alpha\beta} - \var x_3 b_\alpha^\sigma b_{\sigma\beta}, 
 \\
 \Gamma_{\alpha3}^\sigma(\var)& = -b_\alpha^\sigma - \var x_3 b_\alpha^\tau b_\tau^\sigma + O(\var^2), 
\\
 g(\varepsilon)&=a + O(\varepsilon),
\end{aligned}
\end{align*}
for all $\var$, $0<\var\leq\var_0$, where the order symbols $O(\var)$ and $O(\var^2)$  are meant with respect to the norm $||\cdot||_{0,\infty,\bar{\Omega}}$ defined by
\begin{align*} 
||w||_{0,\infty,\bar{\Omega}}=\sup \{|w(\bx)|; \bx\in\bar{\Omega}\}.
\end{align*}
 Finally, there exist constants $a_0, g_0$ and $g_1$ such that
 \begin{align*}
 & 0<a_0\leq a(\by) \ \forall \by\in \bar{\omega},
 \\ 
 & 0<g_0\leq g(\varepsilon)(\bx) \leq g_1 \ \forall \bx\in\bar{\Omega} \ \textrm{and} \ \forall \ \var, 0<\varepsilon\leq \varepsilon_0.
 \end{align*}
\end{theorem}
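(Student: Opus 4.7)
The approach is straightforward differential-geometric bookkeeping: substitute $x_3^\var = \var x_3$ into $\bTheta$, expand every geometric quantity as a Taylor polynomial in $\var$, and collect coefficients. Everything reduces to expansions of the covariant basis, the metric, and the defining formula $\Gamma^{p,\var}_{ij} = \bg^{p,\var}\cdot\d_i^\var\bg_j^\var$.

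First I would compute the scaled covariant basis. From $\bTheta(\pi^\var\bx) = \btheta(\by) + \var x_3\, \ba_3(\by)$ and the Weingarten identity $\d_\alpha \ba_3 = -b_\alpha^\sigma \ba_\sigma$, one finds $\bg_\alpha(\var) = \ba_\alpha - \var x_3\, b_\alpha^\sigma \ba_\sigma$ and $\bg_3(\var) = \ba_3$. Taking pairwise inner products and using $\ba_3\cdot\ba_\sigma = 0$, $|\ba_3| = 1$ gives the scaled covariant metric $g_{\alpha\beta}(\var) = a_{\alpha\beta} - 2\var x_3\, b_{\alpha\beta} + \var^2 x_3^2\, b_\alpha^\tau b_{\tau\beta}$, $g_{\alpha 3}(\var) = 0$, $g_{33}(\var) = 1$. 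The block structure yields $g(\var) = \det(g_{\alpha\beta}(\var)) = a + O(\var)$, $g^{33}(\var) = 1$, $g^{\alpha 3}(\var) = 0$, and $g^{\alpha\beta}(\var) = a^{\alpha\beta} + O(\var)$ by Taylor-expanding the inverse of the $2\times 2$ block. In particular $\bg^{3,\var} = \ba_3$ and $\bg^{\sigma,\var} = \ba^\sigma + O(\var)$.

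Next I would apply $\Gamma^{p,\var}_{ij} = \bg^{p,\var}\cdot\d_i^\var\bg_j^\var$. Since $\bg_3^\var = \ba_3(\by)$ is independent of $x_3^\var$, $\d_3^\var \bg_3^\var = 0$ and by symmetry $\d_\alpha^\var \bg_3^\var = \d_3^\var\bg_\alpha^\var = \d_\alpha\ba_3 = -b_\alpha^\sigma\ba_\sigma$, which is tangential; combined with $\bg^{3,\var} = \ba_3$ this kills $\Gamma^p_{33}(\var)$ and $\Gamma^3_{\alpha 3}(\var)$. The remaining symbols come from $\d_\alpha^\var\bg_\beta^\var = \d_\alpha\ba_\beta + \var x_3\, \d_\alpha\d_\beta\ba_3$, into which I substitute the Gauss formula $\d_\alpha\ba_\beta = \Gamma^\sigma_{\alpha\beta}\ba_\sigma + b_{\alpha\beta}\ba_3$. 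Then $\Gamma^\sigma_{\alpha 3}(\var) = -b_\alpha^\sigma - \var x_3\, b_\alpha^\tau b_\tau^\sigma + O(\var^2)$ and $\Gamma^3_{\alpha\beta}(\var) = b_{\alpha\beta} - \var x_3\, b_\alpha^\sigma b_{\sigma\beta}$ follow by direct substitution; the statement $\d_3\Gamma^p_{\alpha\beta}(\var) = O(\var)$ is immediate since only the $\var^0$-term is independent of $x_3$ and each subsequent term carries at least one factor $\var x_3$.

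The main obstacle is identifying the $\var$-coefficient in $\Gamma^\sigma_{\alpha\beta}(\var)$ as $-x_3\, b_\beta^\sigma|_\alpha$: the raw derivative $\d_\alpha\d_\beta\ba_3 = -\d_\alpha(b_\beta^\sigma\ba_\sigma)$ produces $-\d_\alpha b_\beta^\sigma$ plus a $-b_\beta^\tau\d_\alpha\ba_\tau$ term, while pairing $\d_\alpha\ba_\beta$ with the $O(\var)$-correction of $\bg^{\sigma,\var}$ produces further Christoffel contributions. Collecting all pieces and using Gauss once more, the combination $\d_\alpha b_\beta^\sigma + \Gamma^\sigma_{\alpha\tau}b_\beta^\tau - \Gamma^\tau_{\alpha\beta}b_\tau^\sigma$ of (\ref{b_barra}) emerges, giving exactly $-x_3\, b_\beta^\sigma|_\alpha$ as claimed. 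Finally, the uniform bounds follow by compactness: $a$ is continuous and strictly positive on the compact set $\bar\omega$, so $a_0 := \min_{\bar\omega} a > 0$; since $g(\var) \to a$ uniformly on $\bar\Omega$ and $\var_0$ can be shrunk to keep $g(\var)$ within any prescribed neighbourhood of $a$, constants $0 < g_0 \le g(\var) \le g_1$ exist independently of $\bx\in\bar\Omega$ and $\var\in(0,\var_0]$.
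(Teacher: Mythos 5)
Your computation is correct, but note that the paper offers no internal proof of this statement to compare against: it is recalled verbatim as Theorem 3.3-1 of \cite{Ciarlet4b}, and your argument is essentially the standard proof from that reference — expand $\bg_\alpha(\var)=\ba_\alpha-\var x_3\,b_\alpha^\sigma\ba_\sigma$, $\bg_3(\var)=\ba_3$ via the Weingarten formula, read off the block structure of the metric, and insert the Gauss formula into $\Gamma^{p}_{ij}(\var)=\bg^{p,\var}\cdot\d^\var_i\bg^\var_j$; the identification of the $\var$-coefficient of $\Gamma^\sigma_{\alpha\beta}(\var)$ with $-x_3\,b_\beta^\sigma|_\alpha$ is handled exactly as you describe. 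Two points you should tighten when writing it out in full. First, the expansion $\Gamma^\sigma_{\alpha3}(\var)=-b_\alpha^\sigma-\var x_3\,b_\alpha^\tau b_\tau^\sigma+O(\var^2)$ does not come from $\d^\var_\alpha\bg^\var_\beta$ as your text suggests, but from $\bg^{\sigma,\var}\cdot\d^\var_3\bg^\var_\alpha=\bg^{\sigma,\var}\cdot(-b_\alpha^\tau\ba_\tau)$, and it needs the explicit first-order term of the contravariant basis, $\bg^{\sigma,\var}=\ba^\sigma+\var x_3\,b^\sigma_\tau\ba^\tau+O(\var^2)$, not merely $\ba^\sigma+O(\var)$; you invoke this correction for $\Gamma^\sigma_{\alpha\beta}(\var)$, and the same correction is what produces the term $\var x_3\,b_\alpha^\tau b_\tau^\sigma$ here. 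Second, the bounds $0<g_0\le g(\var)\le g_1$ are asserted for the $\var_0$ of Theorem \ref{var_0}, so you should not shrink $\var_0$: they follow directly because $g(\var)(\bx)=g^\var(\pi^\var(\bx))$, where $g^\var=\bigl(\det(\bg^\var_1,\bg^\var_2,\bg^\var_3)\bigr)^2$ is continuous and strictly positive on the compact set $\bar\omega\times[-\var_0,\var_0]$ by Theorem \ref{var_0}; uniform convergence $g(\var)\to a$ alone yields the two-sided bound only after possibly reducing $\var_0$.
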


We now include the following result that will be used repeatedly in what follows (see Theorem 3.4-1, \cite{Ciarlet4b}, for details).

\begin{theorem} \label{th_int_nula}
 Let $\omega$ be a domain in $\mathbb{R}^2$ with boundary $\gamma$, let $\Omega=\omega\times (-1,1)$, and let $g\in L^p(\Omega)$, $p>1$, be a function such that 
 \begin{align*}
 \intO g \d_3v dx=0, \ \textrm{for all} \ v\in \mathcal{C}^{\infty}(\bar{\Omega}) \ \textrm{with} \ v=0 \on \gamma\times[-1,1]. 
 \end{align*}
 Then $g=0.$
\end{theorem}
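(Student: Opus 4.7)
The proof splits cleanly into two steps, exploiting two different classes of admissible test functions $v$.

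\textbf{Step 1: $g$ does not depend on $x_3$.} The plan is to first restrict to $v\in\mathcal{C}^\infty_c(\Omega)$. Such $v$ vanish on the entire boundary $\partial\Omega$, and in particular on $\gamma\times[-1,1]$, so the hypothesis applies. Approximating in $\mathcal{C}^\infty(\bar\Omega)$ (or observing directly that $\mathcal{C}^\infty_c(\Omega)$ is a subset of the admissible test space) one obtains
\begin{align*}
\langle \d_3 g, v\rangle_{\mathcal{D}'(\Omega),\mathcal{D}(\Omega)} = -\intO g\,\d_3 v\, dx = 0 \quad \forall v\in\mathcal{C}^\infty_c(\Omega),
\end{align*}
so $\d_3 g = 0$ in $\mathcal{D}'(\Omega)$. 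A standard result (via mollification in the $x_3$ variable, using $L^p$-continuity of translations, $p>1$) gives the existence of $\tilde g\in L^p(\omega)$ with $g(\by,x_3)=\tilde g(\by)$ for almost every $(\by,x_3)\in\Omega$.

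\textbf{Step 2: $\tilde g=0$.} Now the plan is to exploit test functions that do \emph{not} vanish on $\Gamma_+\cup\Gamma_-$. For any $\varphi\in\mathcal{C}^\infty_c(\omega)$, set
\begin{align*}
v(\by,x_3):= x_3\,\varphi(\by).
\end{align*}
Then $v\in\mathcal{C}^\infty(\bar\Omega)$, and since $\varphi$ is supported away from $\gamma$, $v$ vanishes on $\gamma\times[-1,1]$; thus $v$ is admissible. Using Step 1 and Fubini,
\begin{align*}
0=\intO g\,\d_3 v\, dx = \intO \tilde g(\by)\varphi(\by)\, d\bx = 2\into \tilde g(\by)\varphi(\by)\, d\by.
\end{align*}
Since $\varphi\in\mathcal{C}^\infty_c(\omega)$ was arbitrary and $\mathcal{C}^\infty_c(\omega)$ is dense in $L^{p'}(\omega)$, we conclude $\tilde g=0$ in $L^p(\omega)$, hence $g=0$ in $L^p(\Omega)$.

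The only mildly delicate point is the justification in Step 1 that a distribution with vanishing $\d_3$ on the cylinder $\Omega=\omega\times(-1,1)$ must be represented by an $L^p$ function depending only on $\by$. This is classical (apply a mollifier in the $x_3$ direction on a slightly shrunken cylinder, conclude the mollified function is constant in $x_3$, then pass to the limit in $L^p$), but it is the one step where the hypothesis $p>1$ would come into play if one worried about measurability/integrability details via Fubini. The remainder of the argument is essentially a one-line variational computation.
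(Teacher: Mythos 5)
Your proof is correct. Note that the paper does not prove this statement at all: it is quoted as Theorem 3.4-1 of the cited monograph of Ciarlet (Vol.\ III), so there is no in-paper argument to compare against; your two-step proof (first $\partial_3 g=0$ in $\mathcal{D}'(\Omega)$ via interior test functions, hence $g(\by,x_3)=\tilde g(\by)$ with $\tilde g\in L^p(\omega)$ by Fubini; then the admissible choice $v=x_3\varphi(\by)$, $\varphi\in\mathcal{C}^\infty_c(\omega)$, to kill $\tilde g$) is the standard route and is essentially the argument of the cited reference, which exploits exactly the fact that test functions need not vanish on $\Gamma_+\cup\Gamma_-$. Two small remarks: the admissibility of $v\in\mathcal{C}^\infty_c(\Omega)$ in Step 1 is immediate since such $v$ extend by zero to $\mathcal{C}^\infty(\bar\Omega)$ and vanish on $\gamma\times[-1,1]$, as you say; and in Step 2 the conclusion $\tilde g=0$ already follows from the du Bois-Reymond lemma for $L^1_{\mathrm{loc}}$ functions, so the hypothesis $p>1$ (which guarantees $p'<\infty$ and the duality/density argument you invoke, and which matters in the mollification details of Step 1) is used only in the mild way you indicate.
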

\begin{remark}
This result holds if $\intO g \d_3v dx=0$ for all $v\in H^1(\Omega)$ such that $v=0$ in $\Gamma_0$. It is in this way that we will use this result in the following.
\end{remark}

In what follows we shall present several results related with the existence and uniqueness of the solutions of the problems presented in this paper.   Moreover, we show the regularity of these solutions depending on the regularity of the data provided.

Let $V$ be a Hilbert space. 
 We denote by $(\cdot,\cdot)_V$ and $||\cdot||_V$ the corresponding inner product and associated norm. Consider the bounded operators $B:V\longrightarrow V$, $A:V\longrightarrow V$ and a function $f:(0,T)\longrightarrow V$. Let also $u_0\in V$. We are interested in studying the problem

\begin{problem} \label{problema_apendice}
 Find $u: [0,T]\to V$ such that,
\begin{align*}
& B \dot{u}(t)+ A u(t) =f(t) \ae, \\
& u(0)=u_0.   
\end{align*}
\end{problem}

\begin{theorem}\label{teorema_existenciayunicidad}
 Assume that  $B:V\longrightarrow V$ is strongly monotone, Lipschitz-continuous operator and $A:V\longrightarrow V$ is a Lipschitz-continuous operator. Also, let $u_0\in V$ and $f\in L^2(0,T;V)$. Then, the Problem \ref{problema_apendice} has a unique solution
 $u\in W^{1,2}(0,T;V)$.
\end{theorem}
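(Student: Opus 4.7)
The plan is to recast Problem \ref{problema_apendice} as an explicit first-order ODE in the Hilbert space $V$ and then apply the Banach fixed point theorem in an integral-equation reformulation, with exponential weighting in time.

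First I would show that the hypotheses on $B$ force it to be a bijection with a Lipschitz-continuous inverse. If $m>0$ denotes the strong monotonicity constant, then $(Bu-Bv,u-v)_V\ge m\|u-v\|_V^2$, and combining with Cauchy--Schwarz gives $\|Bu-Bv\|_V\ge m\|u-v\|_V$; thus $B$ is injective with closed range, and its inverse on that range is Lipschitz with constant $1/m$. Surjectivity is the standard consequence of the Minty--Browder theorem for strongly monotone, hemicontinuous (here even Lipschitz) operators on a Hilbert space. Consequently, the equation $B\dot u(t)+Au(t)=f(t)$ can be rewritten equivalently as
\begin{equation*}
\dot u(t)=B^{-1}\bigl(f(t)-Au(t)\bigr)=:F(t,u(t)),\qquad u(0)=u_0,
\end{equation*}
where $F(t,\cdot)$ is Lipschitz on $V$ with constant $L_A/m$ (uniformly in $t$), and $t\mapsto F(t,v)$ is measurable and belongs to $L^2(0,T;V)$ for each fixed $v\in V$.

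Next I would apply a Banach contraction argument to the integral equation $u(t)=u_0+\int_0^t F(s,u(s))\,ds$. On the space $X:=C([0,T];V)$, equipped for a parameter $K>0$ with the Bielecki norm $\|v\|_K:=\sup_{t\in[0,T]}e^{-Kt}\|v(t)\|_V$ (which is equivalent to the usual sup-norm, so $X$ remains complete), define
\begin{equation*}
(\mathcal{T}u)(t):=u_0+\int_0^t B^{-1}\bigl(f(s)-Au(s)\bigr)\,ds.
\end{equation*}
Since $f\in L^2(0,T;V)\subset L^1(0,T;V)$ and $s\mapsto Au(s)$ is continuous, $\mathcal{T}$ maps $X$ into itself. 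The Lipschitz bounds on $B^{-1}$ and $A$ yield
\begin{equation*}
\|(\mathcal{T}u)(t)-(\mathcal{T}v)(t)\|_V\le\frac{L_A}{m}\int_0^t \|u(s)-v(s)\|_V\,ds,
\end{equation*}
and a standard computation gives $\|\mathcal{T}u-\mathcal{T}v\|_K\le \frac{L_A}{mK}\|u-v\|_K$, so that any choice $K>L_A/m$ turns $\mathcal{T}$ into a strict contraction. The Banach fixed point theorem then produces a unique $u\in X$ with $\mathcal{T}u=u$.

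Finally I would promote this continuous fixed point to a $W^{1,2}(0,T;V)$ solution. Once $u\in C([0,T];V)$ is known, the map $t\mapsto f(t)-Au(t)$ lies in $L^2(0,T;V)$, hence so does $\dot u(t)=B^{-1}(f(t)-Au(t))$ by the Lipschitz continuity of $B^{-1}$; together with the integral identity this yields $u\in W^{1,2}(0,T;V)$ and the validity of $u(0)=u_0$. Uniqueness at the $W^{1,2}$ level follows from uniqueness in $X$ since any $W^{1,2}(0,T;V)$ solution satisfies the same integral equation. The main obstacle is the first step: without the invertibility of $B$ with a Lipschitz inverse one cannot reduce the implicit evolution equation to a form amenable to the Picard--Bielecki scheme; once Minty--Browder is invoked, the remainder is a routine contraction argument.
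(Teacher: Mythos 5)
Your proposal is correct and takes essentially the same route as the paper, which defers to Theorem 3.3 of the cited Sofonea--Matei monograph: use strong monotonicity plus Lipschitz continuity to invert the operator acting on $\dot u$, rewrite the problem as an explicit evolution equation, and conclude by the Banach fixed point theorem. Your Picard--Bielecki contraction on $C([0,T];V)$ is only a routine variant of the fixed-point argument indicated there (the paper's sketched alternative iterates a contraction on the velocity in $L^2(0,T;V)$ using monotonicity directly), so no genuinely different idea is involved.
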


 The proof of this theorem can be found in Theorem 3.3, \cite{pto_fijo}, where the author uses the inverse of the operator $A$ and the Banach fixed point theorem. Alternatively, we can prove  the result without explicitly using the inverse of the operator by using its Lipschitz-continuity instead.
 
 The existence and uniqueness of the inhomogeneous evolutionary equations, when the operator $B$ is the identity, can be found in Chapter 6, \cite{Yosida}. In addition, in \cite{Mascarenhas} the author proves the scalar version  for the quasi-static case and with no body loadings. In  Chapter 6, \cite{SanchezPhy}, it is shown that  these restrictions can be dropped obtaining the existence of a unique solution in the framework of semigroup theory.

 \begin{corollary} \label{Cor_ex_un_reg}
Under the assumptions of the previous theorem if, in addition, $\dot{f}\in L^2(0,T;V)$ and  the operators $A$ and $B$ are linear, the Problem  \ref{problema_apendice} has a unique solution  $u\in{W}^{2,2}(0,T;V)$.
 \end{corollary}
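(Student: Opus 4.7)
The plan is to obtain the extra time-regularity by formally differentiating the evolution equation of Problem~\ref{problema_apendice} in time and reducing the resulting problem to an instance of Theorem~\ref{teorema_existenciayunicidad}. Writing $w$ for the candidate $\dot u$, the auxiliary problem is to find $w:[0,T]\to V$ with
\begin{align*}
B\dot w(t)+Aw(t)=\dot f(t)\ \aes,\qquad w(0)=w_0,
\end{align*}
where a suitable initial datum $w_0\in V$ must first be identified.

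To construct $w_0$, I would use that $B$ is linear, strongly monotone and Lipschitz-continuous on the Hilbert space $V$, so that the bilinear form $(Bv,z)_V$ is continuous and coercive; the Lax--Milgram lemma then ensures that $B\colon V\to V$ is a linear homeomorphism, and in particular $B^{-1}$ is bounded on $V$. Evaluating the equation of Problem~\ref{problema_apendice} at $t=0$ and using $u(0)=u_0$ leads naturally to
\begin{align*}
w_0 := B^{-1}\bigl(f(0)-Au_0\bigr)\in V.
\end{align*}
The pointwise value $f(0)$ is well defined because the hypotheses $f,\dot f\in L^2(0,T;V)$ yield $f\in H^1(0,T;V)\hookrightarrow C([0,T];V)$.

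With $w_0$ at hand, the auxiliary problem meets every assumption of Theorem~\ref{teorema_existenciayunicidad} ($A$ and $B$ linear and Lipschitz-continuous, $B$ strongly monotone, $\dot f\in L^2(0,T;V)$ and $w_0\in V$) and therefore admits a unique solution $w\in W^{1,2}(0,T;V)$. To identify $w$ with $\dot u$ I would set $\tilde u(t):=u_0+\int_0^t w(s)\,ds$; then $\dot{\tilde u}=w$, and integrating the auxiliary equation from $0$ to $t$, using the linearity of $A$ and $B$ together with $Bw_0=f(0)-Au_0$, one obtains $B\dot{\tilde u}(t)+A\tilde u(t)=f(t)$ a.e.\ in $(0,T)$ and $\tilde u(0)=u_0$. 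Hence $\tilde u$ solves Problem~\ref{problema_apendice} and, by the uniqueness part of Theorem~\ref{teorema_existenciayunicidad}, $\tilde u=u$. Consequently $\dot u=w\in W^{1,2}(0,T;V)$, which is exactly $u\in W^{2,2}(0,T;V)$.

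The main obstacle is the clean construction of $w_0$: one must justify both that $f$ admits a pointwise value at $t=0$ (via the embedding $H^1(0,T;V)\hookrightarrow C([0,T];V)$) and that $B$ is boundedly invertible on $V$. Both points rest exactly on the additional hypotheses of the corollary, namely the linearity of $A$ and $B$ and the regularity $\dot f\in L^2(0,T;V)$; once these are in place, the argument reduces to Theorem~\ref{teorema_existenciayunicidad} applied to the differentiated problem, combined with the uniqueness statement already established there.
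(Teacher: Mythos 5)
Your proposal is correct and follows essentially the same route as the paper: differentiate the equation in time, solve the auxiliary problem $B\dot z + Az=\dot f$ via Theorem~\ref{teorema_existenciayunicidad}, then integrate back and invoke uniqueness to identify the primitive with $u$, so that $\dot u\in W^{1,2}(0,T;V)$. Your treatment of the initial datum is in fact slightly more careful than the paper's (which only writes $Bz(0)=f(0)-Au_0$), since you justify both the bounded invertibility of $B$ and the meaning of $f(0)$ through the embedding $H^1(0,T;V)\hookrightarrow C([0,T];V)$.
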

\begin{proof} The existence and uniqueness of $\bu\in W^{1,2}(0,T;V)$ is consequence of the Theorem \ref{teorema_existenciayunicidad}. Let us find the additional regularity of the solution. To do that consider the equation
\begin{equation}\label{Bz}
B\dot{z}(t) + A{z}(t)=\dot{f}(t), \ae,
\end{equation} 
with the initial condition $B{z}(0)=f(0) - A u_0\in V$. By Theorem \ref{teorema_existenciayunicidad} there exists a unique $z\in W^{1,2}(0,T;V)$ solution of (\ref{Bz}).
Now, if we integrate the equation and substitute the initial condition, by the linearity of the operator $B$ we find that
\begin{align*}
B({{z}}(t)) - B({z}(0))+ \int_{0}^{t}A {z}(s)ds= f(t) - f(0).
\end{align*}
Let ${w}(t)=u_0 + \int_{0}^{t}{z}(s)ds$, so that $\dot{w}(t)={z}(t)$ and $w(0)=u_0$. Due to the linearity of the operator $A$ we find that
\begin{align*}
B\dot{w}(t)+ A ({w}(t)-{u}_0)=f(t)-Au_0,
\end{align*}
hence,
\begin{align*}
B\dot{w}(t)+ A {w}(t)=f(t).
\end{align*}
Since by Theorem \ref{teorema_existenciayunicidad}  there is a unique solution for this equation, we deduce that $u={w}\in {W}^{1,2}(0,T;V)$. Moreover, as $z$ is solution of (\ref{Bz}) then $\dot{u}=\dot{w}=z\in{W}^{1,2}(0,T;V)$. Therefore, we conclude $u\in{W}^{2,2}(0,T;V)$.
\end{proof}

\begin{theorem}\label{Thexistunic}
 Let $\Omega^\var$ be a domain in $\mathbb{R}^3$ defined as in Section \ref{problema} and let $\bTheta$ be a  $\mathcal{C}^2$-diffeomorphism of $\bar{\Omega}^\var$ in its image $\bTheta(\bar{\Omega}^\var)$, such that the three vectors $\bg_i^\var(\bx)=\d_i^\var\bTheta(\bx^\var)$ are linearly independent for all $\bx^\var\in\bar{\Omega}^\var$. Let $\Gamma_0^\var$ be a $d\Gamma^\var$-measurable subset of $\Gamma^\var=\d\Omega^\var$ such that  $meas(\Gamma_0^\var)>0.$
 Let $\fb^{i,\var}\in L^{2}(0,T; L^2(\Omega^\var)) $, $\bh^{i,\var}\in L^{2}(0,T; L^2(\Gamma_1^\var))$, where $\Gamma_1^\var:= \Gamma_+^\var\cup\Gamma_-^\var$. Let  $\bu_0^\var\in V(\Omega^\var). $ Then, there exists a unique solution $\bu^\var=(u_i^\var):[0,T]\times\Omega^\var \rightarrow \mathbb{R}^3$ satisfying the Problem \ref{problema_eps}. Moreover $\bu^\var\in W^{1,2}(0,T;V(\Omega^\var))$. In addition to that, if $\dot{\fb}^{i,\var}\in L^{2}(0,T; L^2(\Omega^\var)) $, $\dot{\bh}^{i,\var}\in L^{2}(0,T; L^2(\Gamma_1^\var))$, then $\bu^\var\in W^{2,2}(0,T;V(\Omega^\var))$.
\end{theorem}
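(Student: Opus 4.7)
\begin{pf}[Proof proposal]
The plan is to cast Problem \ref{problema_eps} in the abstract form of Problem \ref{problema_apendice} and then invoke Theorem \ref{teorema_existenciayunicidad} (for existence/uniqueness) and Corollary \ref{Cor_ex_un_reg} (for the improved regularity under stronger assumptions on the data). Take $V=V(\Omega^\var)$ equipped with the $[H^1(\Omega^\var)]^3$ inner product; it is a Hilbert space because $meas(\Gamma_0^\var)>0$. Define bilinear forms
\begin{align*}
a^\var(\bu,\bv)&:=\int_{\Omega^\var}A^{ijkl,\var}e^\var_{k\|l}(\bu)e^\var_{i\|j}(\bv)\sqrt{g^\var}\,dx^\var,\\
b^\var(\bu,\bv)&:=\int_{\Omega^\var}B^{ijkl,\var}e^\var_{k\|l}(\bu)e^\var_{i\|j}(\bv)\sqrt{g^\var}\,dx^\var,\\
\langle L^\var(t),\bv\rangle&:=\int_{\Omega^\var}f^{i,\var}(t)v_i\sqrt{g^\var}\,dx^\var + \int_{\Gamma_1^\var}h^{i,\var}(t)v_i\sqrt{g^\var}\,d\Gamma^\var,
\end{align*}
and let $A^\var,B^\var:V\to V$ and $f^\var(t)\in V$ be the Riesz representatives of $a^\var$, $b^\var$ and $L^\var(t)$, respectively. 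These are well-defined linear operators (and $L^\var(t)$ belongs to $V^*\cong V$) because the Christoffel symbols and the tensors $A^{ijkl,\var}$, $B^{ijkl,\var}$ are bounded on $\bar\Omega^\var$ (Theorem \ref{Th_simbolos2D_3D}) and because $\sqrt{g^\var}$ is bounded above and below by positive constants.

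The next step is to check the hypotheses of Theorem \ref{teorema_existenciayunicidad}. Continuity (hence Lipschitz continuity, since the operators are linear) of $A^\var$ and $B^\var$ follows from the pointwise bounds of their coefficients, combined with the continuity of the strain maps $\bv\mapsto e^\var_{i\|j}(\bv)$ from $[H^1(\Omega^\var)]^3$ into $L^2(\Omega^\var)$. Strong monotonicity of $B^\var$ is the main analytical step: starting from the uniform positive-definiteness of the viscosity tensor (\ref{elipticidadB}) and the lower bound $\sqrt{g^\var}\ge \sqrt{g_0}$, we get
\[
b^\var(\bv,\bv)\ge c\sum_{i,j}\|e^\var_{i\|j}(\bv)\|_{0,\Omega^\var}^2\qquad\forall\,\bv\in V.
\]
To turn this into an estimate of the $[H^1(\Omega^\var)]^3$-norm of $\bv$, one invokes the three-dimensional Korn inequality in curvilinear coordinates for a shell clamped on $\Gamma_0^\var$ (Theorem 1.7-4 of \cite{Ciarlet4b}), which yields a constant $c^\var>0$ such that $\sum_{i,j}\|e^\var_{i\|j}(\bv)\|_{0,\Omega^\var}^2\ge c^\var\|\bv\|_{1,\Omega^\var}^2$. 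This Korn-type inequality is exactly where the hypothesis $meas(\Gamma_0^\var)>0$, the $\mathcal{C}^2$-diffeomorphism property of $\bTheta$, and the linear independence of the $\bg_i^\var$ are used.

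Finally, the time regularity of $t\mapsto f^\var(t)$: since $f^{i,\var}\in L^2(0,T;L^2(\Omega^\var))$ and $h^{i,\var}\in L^2(0,T;L^2(\Gamma_1^\var))$, a direct estimate of $\|L^\var(t)\|_{V^*}$ by Cauchy--Schwarz and trace continuity gives $f^\var\in L^2(0,T;V)$. Using the initial datum $\bu_0^\var\in V(\Omega^\var)$, Theorem \ref{teorema_existenciayunicidad} applies and yields a unique $\bu^\var\in W^{1,2}(0,T;V(\Omega^\var))$ solving $B^\var\dot\bu^\var(t)+A^\var\bu^\var(t)=f^\var(t)$ a.e., which is exactly Problem \ref{problema_eps}. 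If additionally $\dot f^{i,\var}, \dot h^{i,\var}\in L^2(0,T;L^2)$, then $\dot f^\var\in L^2(0,T;V)$; since $A^\var,B^\var$ are linear, Corollary \ref{Cor_ex_un_reg} upgrades the solution to $W^{2,2}(0,T;V(\Omega^\var))$. The only non-routine point is securing the Korn inequality in curvilinear coordinates, but this is available in the cited literature; everything else reduces to bookkeeping with the uniform bounds provided by Theorem \ref{Th_simbolos2D_3D} and the coercivity estimate (\ref{elipticidadB}).
\end{pf}
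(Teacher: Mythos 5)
Your proposal is correct and follows essentially the same route as the paper: cast Problem \ref{problema_eps} in the abstract form of Problem \ref{problema_apendice} via Riesz representatives of the two bilinear forms and the load functional, then apply Theorem \ref{teorema_existenciayunicidad} for existence and uniqueness in $W^{1,2}(0,T;V(\Omega^\var))$ and Corollary \ref{Cor_ex_un_reg} for the $W^{2,2}$ regularity under the extra assumptions on the data. You are in fact slightly more explicit than the paper on one point: the paper asserts strong monotonicity of the operators directly from the ellipticity estimates (\ref{elipticidadA})--(\ref{elipticidadB}), whereas you correctly note that a Korn inequality in curvilinear coordinates (using $meas(\Gamma_0^\var)>0$) is also needed to pass from the strain norms to the full $\|\cdot\|_{1,\Omega^\var}$ norm.
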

\begin{proof}
Let $V=V(\Omega^\var)$ for simplicity.  By the Riesz Representation Theorem we find that there exist bounded linear operators  $B:V\longrightarrow V ,$ $A:V\longrightarrow V$ and $\bbf\in V$ such that
\begin{align*}
(B\bu^\var,\bv^\var)_{V}&:=\int_{\Omega^\varepsilon} B^{ijkl,\varepsilon}e^\varepsilon_{k||l}(\dot{\bu}^\varepsilon)e_{i||j}^\var(\bv^\varepsilon) \sqrt{g^\varepsilon}  dx^\varepsilon,
\\
(A \bu^\var,\bv^\var)_{V}&:=\int_{\Omega^\varepsilon}A^{ijkl,\varepsilon}e^\varepsilon_{k||l}(\bu^\varepsilon)e^\varepsilon_{i||j}(\bv^\varepsilon)\sqrt{g^\varepsilon} dx^\varepsilon, 
\\
(\bbf,\bv^\var)_{V}&:=\int_{\Omega^\varepsilon} f^{i,\varepsilon} v_i^\varepsilon \sqrt{g^\varepsilon} dx^\varepsilon + \int_{\Gamma_1^\varepsilon} h^{i,\varepsilon} v_i^\varepsilon\sqrt{g^\varepsilon}  d\Gamma^\varepsilon ,
\end{align*}
for all $\bu^\var, \bv^\var \in V$. The operators $B$ and $A$ are strongly monotone  as a consequence of the ellipticity of the fourth order tensors $(A^{ijkl,\var})$ and $(B^{ijkl,\var})$ in (\ref{elipticidadA})--(\ref{elipticidadB}).
Hence, the  Problem \ref{problema_eps} can be written as :
\begin{problem}
Find $\bu^\var:[0,T]\times\Omega^\var\longrightarrow \mathbb{R}^3$ such that,
\begin{align*} 
& \bu^\var(t)\in V \forallt,\\
& B \dot{\bu}^\var(t) + A \bu^\var(t)=\bbf(t) \ae,\\ 
&\bu^\var(0)=\bu_0^\var \ \textrm{in}\ V.
\end{align*}
\end{problem}
 Therefore,  we can apply Theorem \ref{teorema_existenciayunicidad} and conclude that $\bu^\var\in{W}^{1,2}(0,T;V)$. Moreover, if  $\dot{\fb}^{i,\var}\in L^{2}(0,T; L^2(\Omega^\var)) $, $\dot{\bh}^{i,\var}\in L^{2}(0,T; L^2(\Gamma_1^\var))$,  then we are in conditions of the Corollary \ref{Cor_ex_un_reg} and we conclude that $\bu^\var\in W^{2,2}(0,T;V)$.
\end{proof}

 \begin{theorem}\label{Theorema_exist_escalado_sin_orden} 
  Let $\Omega$ be a domain in $\mathbb{R}^3$ defined as in Section \ref{seccion_dominio_ind} and let $\bTheta$ be a $\mathcal{C}^2$-diffeomorphism of $\bar{\Omega}$ onto its image $\bTheta(\bar{\Omega})$, such that the three vectors $\bg_i=\d_i\bTheta(\bx)$ are linearly independent for all $\bx\in\bar{\Omega}$. Let $\fb^{i}(\var)\in L^{2}(0,T; L^2(\Omega)) $, $\bh^{i}(\var)\in L^{2}(0,T; L^2(\Gamma_1))$, where $\Gamma_1:= \Gamma_+\cup\Gamma_-$. Let  $\bu_0(\var)\in V(\Omega). $ Then, there exists a unique solution $\bu(\var)=(u_i(\var)):[0,T]\times\Omega \rightarrow \mathbb{R}^3$ satisfying the Problem \ref{problema_escalado}. Moreover $\bu(\var)\in W^{1,2}(0,T;V(\Omega))$. In addition to that, if $\dot{\fb}^{i}(\var)\in L^{2}(0,T; L^2(\Omega)) $, $\dot{\bh}^{i}(\var)\in L^{2}(0,T; L^2(\Gamma_1))$, then $\bu(\var)\in W^{2,2}(0,T;V(\Omega))$.
  \end{theorem}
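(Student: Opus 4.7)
The plan is to recast Problem \ref{problema_escalado} as an abstract Cauchy problem on the Hilbert space $V := V(\Omega)$ and apply Theorem \ref{teorema_existenciayunicidad} together with Corollary \ref{Cor_ex_un_reg}, following the same strategy as in the proof of Theorem \ref{Thexistunic}. By the Riesz representation theorem I define bounded linear operators $A(\var), B(\var) : V \to V$ and, for a.e.\ $t \in (0,T)$, an element $\bbf(\var)(t) \in V$ by
\begin{align*}
(A(\var)\bu, \bv)_V &:= \int_{\Omega} A^{ijkl}(\var) e_{k||l}(\var;\bu) e_{i||j}(\var;\bv) \sqrt{g(\var)}\, dx, \\
(B(\var)\bu, \bv)_V &:= \int_{\Omega} B^{ijkl}(\var) e_{k||l}(\var;\bu) e_{i||j}(\var;\bv) \sqrt{g(\var)}\, dx, \\
(\bbf(\var)(t), \bv)_V &:= \int_{\Omega} \fb^{i}(\var)(t) v_i \sqrt{g(\var)}\, dx + \frac{1}{\var}\int_{\Gamma_+\cup\Gamma_-} \bh^{i}(\var)(t) v_i \sqrt{g(\var)}\, d\Gamma,
\end{align*}
for all $\bu,\bv \in V$. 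These are well defined and $\bbf(\var) \in L^2(0,T;V)$, since $\Gamma_{ij}^p(\var)$, $g(\var)$, $A^{ijkl}(\var)$ and $B^{ijkl}(\var)$ all lie in $\mathcal{C}^0(\bar{\Omega})$ (Theorem \ref{Th_simbolos2D_3D}) and the data satisfy the assumed $L^2$ integrability. In these terms Problem \ref{problema_escalado} becomes: find $\bu(\var):[0,T]\to V$ with $\bu(\var)(t) \in V$ satisfying
\begin{align*}
B(\var)\dot{\bu}(\var)(t) + A(\var)\bu(\var)(t) = \bbf(\var)(t) \ \textrm{a.e. in} \ (0,T), \qquad \bu(\var)(0) = \bu_0(\var).
\end{align*}

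The main technical step is to verify that $A(\var)$ and $B(\var)$ are strongly monotone; their Lipschitz continuity is then automatic from linearity and boundedness. For $B(\var)$, the $\var$-uniform ellipticity (\ref{elipticidadB_eps}) combined with the positivity lower bound $g(\var) \geq g_0 > 0$ from Theorem \ref{Th_simbolos2D_3D} yields
\begin{align*}
(B(\var)\bv, \bv)_V \geq C_v^{-1}\sqrt{g_0}\, \sum_{i,j} \|e_{i||j}(\var;\bv)\|_{0,\Omega}^2,
\end{align*}
and a three-dimensional Korn-type inequality in curvilinear coordinates on $V$ (valid since $\var > 0$ is fixed and $meas(\gamma_0) > 0$) bounds the right-hand side below by $c_B(\var)\|\bv\|_{1,\Omega}^2$. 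Here we allow the coercivity constant to depend on $\var$, since the theorem only asserts existence for each fixed $\var$. The argument for $A(\var)$ is identical, using (\ref{elipticidadA_eps}) in place of (\ref{elipticidadB_eps}).

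With strong monotonicity of $B(\var)$ and Lipschitz continuity of both operators secured, Theorem \ref{teorema_existenciayunicidad} produces a unique $\bu(\var) \in W^{1,2}(0,T;V)$. If, moreover, $\dot{\fb}^{i}(\var) \in L^{2}(0,T;L^2(\Omega))$ and $\dot{\bh}^{i}(\var) \in L^{2}(0,T;L^2(\Gamma_1))$, then $\dot{\bbf}(\var) \in L^2(0,T;V)$, and since $A(\var)$ and $B(\var)$ are linear, Corollary \ref{Cor_ex_un_reg} upgrades the regularity to $\bu(\var) \in W^{2,2}(0,T;V)$. The hardest part of the argument is the coercivity step, because the scaled strains $e_{i||j}(\var;\bv)$ mix intrinsic differential operators with powers of $1/\var$; an equivalent and conceptually cleaner alternative is to observe that $\pi^\var$ is a bi-Lipschitz diffeomorphism which maps Problem \ref{problema_eps} onto Problem \ref{problema_escalado}, so the desired existence, uniqueness and regularity can simply be pulled back from Theorem \ref{Thexistunic} through $\pi^\var$.
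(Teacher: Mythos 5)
Your proposal is correct and takes essentially the same approach as the paper: the paper's proof simply declares the argument analogous to that of Theorem \ref{Thexistunic}, i.e.\ Riesz representation of $A(\var)$, $B(\var)$ and $\bbf(\var)$ on $V(\Omega)$, strong monotonicity via the scaled ellipticity (\ref{elipticidadA_eps})--(\ref{elipticidadB_eps}), then Theorem \ref{teorema_existenciayunicidad} followed by Corollary \ref{Cor_ex_un_reg} for the $W^{2,2}$ regularity. Your explicit invocation of an ($\var$-dependent) Korn-type inequality to pass from the scaled strain norms to $\|\cdot\|_{1,\Omega}$, and your closing remark that the result can alternatively be pulled back from Theorem \ref{Thexistunic} through $\pi^\var$, are sound refinements of details the paper leaves implicit.
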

  \begin{proof}
  The proof of this theorem is analogous to the proof in Theorem \ref{Thexistunic}, taking into account the ellipticity of the scaled fourth-order tensors in (\ref{elipticidadA_eps})--(\ref{elipticidadB_eps})
  and applying a corollary of Theorem \ref{teorema_existenciayunicidad} with $V=V(\Omega)$. Moreover, if  $\dot{\fb}^{i}(\var)\in L^{2}(0,T; L^2(\Omega)) $, $\dot{\bh}^{i}(\var)\in L^{2}(0,T; L^2(\Gamma_1))$,  then we are in conditions of the Corollary \ref{Cor_ex_un_reg} and we conclude that $\bu(\var)\in W^{2,2}(0,T;V(\Omega))$.
  \end{proof}

Now, let $\tilde{V}:= W^{1,2}(0,T; \calQ)$, where $\mathcal{Q}:=\{ (\Phi_{\alpha\beta})\in \mathbb{S}^2 ; \Phi_{\alpha\beta}\in L^2(\omega)\}$. Notice that $\left(\calQ, (\cdot, \cdot) \right)$ is a Hilbert space, where $ (\cdot, \cdot)  $ denotes its inner product.  We define the operators  $a: \calQ\times \calQ \longrightarrow \mathbb{R}$,  $b: \calQ\times \calQ \longrightarrow \mathbb{R}$ and  $c: \calQ\times \calQ \longrightarrow \mathbb{R}$ by
\begin{align} \label{operador_a}
a(\Sigma, \Phi):= \into \a \Sigma_{\sigma \tau} \Phi_{\alpha \beta} \sqrt{a} dy,
\\ \label{operador_b}
b(\Sigma, \Phi):= \into \b \Sigma_{\sigma \tau} \Phi_{\alpha \beta} \sqrt{a} dy,
\\ \label{operador_c}
c(\Sigma, \Phi):= \into \c \Sigma_{\sigma \tau} \Phi_{\alpha \beta} \sqrt{a} dy,
\end{align}
for all $ \Sigma, \Phi \in\calQ, $ where $\a, \b$ and $\c$ denote the contravariant components of three fourth order two-dimensional elliptic tensors. 

\begin{theorem} \label{teorema_existencia_bidimensional}
Let $f\in L^p( 0, T ;\calQ)$ with $p \geq 2$ , $\Sigma_0 \in \calQ$ and a constant $k>0$. Consider the strongly monotone, Lipschitz-continuous  operators $a,b,c :\calQ \times \calQ \longrightarrow \mathbb{R}$ defined  in (\ref{operador_a})--(\ref{operador_c}). Then, there exists   $\Sigma:[0,T]\longrightarrow\calQ $ unique solution to the problem
\begin{align} \label{ecuacion_operadores}
&a(\Sigma, \Phi) + b(\dot{\Sigma}, \Phi) - c\left( \int_0^t e^{-k(t-s)} \Sigma(s) ds, \Phi\right) = \left( f(t), \Phi\right), \ \forall \Phi\in\calQ, \aes,
\\ \label{condicion_operadores}
&\Sigma(0)=\Sigma_0.
\end{align}
Moreover, $\Sigma\in \tilde{V}$.  In addition, if  $\dot{\fb}\in L^{2}(0,T; \calQ) $,  then $\Sigma\in W^{2,2}(0,T;\calQ)$.
\end{theorem}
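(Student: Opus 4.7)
My plan is to mimic the strategy of Theorem \ref{Thexistunic}: rewrite the variational equation as an abstract Cauchy problem on $\calQ$ via Riesz representation, and then handle the Volterra-type memory term by a Banach fixed-point argument in $L^2(0,T;\calQ)$.

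Since $a,b,c$ are bounded bilinear forms on the Hilbert space $\calQ$, the Riesz representation theorem yields bounded linear operators $\mathcal{A},\mathcal{B},\mathcal{C}:\calQ\to\calQ$ with $a(\Sigma,\Phi)=(\mathcal{A}\Sigma,\Phi)$, $b(\Sigma,\Phi)=(\mathcal{B}\Sigma,\Phi)$, $c(\Sigma,\Phi)=(\mathcal{C}\Sigma,\Phi)$; strong monotonicity of the forms transfers to the operators. The equation (\ref{ecuacion_operadores})--(\ref{condicion_operadores}) is then equivalent to
\begin{equation*}
\mathcal{B}\dot{\Sigma}(t)+\mathcal{A}\Sigma(t)=f(t)+\mathcal{C}\!\int_0^t e^{-k(t-s)}\Sigma(s)\,ds,\quad \Sigma(0)=\Sigma_0.
\end{equation*}

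Next, I would define a fixed-point map $\Lambda:L^2(0,T;\calQ)\to L^2(0,T;\calQ)$ by letting $\Lambda(\eta)$ be the unique solution, granted by Theorem \ref{teorema_existenciayunicidad}, of
\begin{equation*}
\mathcal{B}\dot{\Sigma}+\mathcal{A}\Sigma=f+\mathcal{C}\!\int_0^t e^{-k(t-s)}\eta(s)\,ds,\quad \Sigma(0)=\Sigma_0,
\end{equation*}
the right-hand side being in $L^2(0,T;\calQ)$ whenever $\eta$ and $f$ are. Subtracting two such equations, testing against $\Lambda(\eta_1)(t)-\Lambda(\eta_2)(t)$, and using the strong monotonicity of $\mathcal{B}$, the Lipschitz continuity of $\mathcal{A}$ and $\mathcal{C}$, and the bound $|e^{-k(t-s)}|\le 1$, I expect an estimate of the form
\begin{equation*}
\|\Lambda(\eta_1)(t)-\Lambda(\eta_2)(t)\|_\calQ^2 \le C\int_0^t \|\eta_1(s)-\eta_2(s)\|_\calQ^2\,ds.
\end{equation*}
Iterating this inequality in the usual Volterra fashion will make some power $\Lambda^n$ a strict contraction on $L^2(0,T;\calQ)$, so the Banach fixed-point theorem produces a unique $\Sigma^{\ast}$ with $\Lambda(\Sigma^{\ast})=\Sigma^{\ast}$ that solves (\ref{ecuacion_operadores})--(\ref{condicion_operadores}) and, by construction, inherits the regularity $\Sigma^{\ast}\in W^{1,2}(0,T;\calQ)=\tilde{V}$.

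For the extra regularity when $\dot{f}\in L^2(0,T;\calQ)$, I would rewrite the equation as $\mathcal{B}\dot{\Sigma}+\mathcal{A}\Sigma=F(t)$ with $F(t):=f(t)+\mathcal{C}\!\int_0^t e^{-k(t-s)}\Sigma(s)\,ds$, differentiate the convolution to get $\dot{F}=\dot{f}+\mathcal{C}\Sigma-k\mathcal{C}\!\int_0^t e^{-k(t-s)}\Sigma(s)\,ds\in L^2(0,T;\calQ)$ (using $\Sigma\in L^2$ and the boundedness of $\mathcal{C}$), and then invoke Corollary \ref{Cor_ex_un_reg} to conclude $\Sigma\in W^{2,2}(0,T;\calQ)$. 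The hard part will be the fixed-point step, in particular deriving the Volterra-type contraction estimate in a form that cleanly admits iteration, in the spirit of the argument indicated for Theorem \ref{teorema_existenciayunicidad}.
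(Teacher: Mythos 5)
Your proposal is correct and follows essentially the same route as the paper: an auxiliary linear problem handled through Riesz representation and Theorem \ref{teorema_existenciayunicidad}, a Banach fixed-point argument with a Volterra/Gronwall-type estimate to absorb the exponential memory term, and Corollary \ref{Cor_ex_un_reg} for the $W^{2,2}(0,T;\calQ)$ regularity. The only cosmetic difference is that your contraction map returns the solution itself and acts on $L^2(0,T;\calQ)$, whereas the paper iterates the map $\theta\mapsto\int_0^t e^{-k(t-s)}\Sigma_\theta(s)\,ds$ on $\tilde{V}$; both lead to the same estimate (using the symmetry and strong monotonicity of $b$ together with Gronwall's inequality) and the same conclusion.
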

\begin{proof}
We first consider the auxiliary problem
\begin{align} \label{ecuacion_auxiliar}
&a(\Sigma_\theta, \Phi) + b(\dot{\Sigma}_\theta, \Phi)  = \left( f(t), \Phi\right) + c\left( \theta, \Phi\right), \ \forall \Phi\in\calQ \aes,
\\ \label{condicion_auxiliar}
&\Sigma_\theta(0)=\Sigma_0,
\end{align}
where $\theta\in \tilde{V}$. Notice that by the Riesz Representation Theorem we find that there exist bounded linear operators  $\tilde{B}:\calQ\longrightarrow \calQ ,$ $\tilde{A}:\calQ\longrightarrow \calQ$ and $\tilde{f}\in \calQ$ such that
\begin{align*}
(\tilde{B}\Sigma_\theta,\Phi)&:=b({\Sigma}_\theta, \Phi), \\
(\tilde{A }\Sigma_\theta,\Phi)&:= a(\Sigma_\theta, \Phi), \\
(\tilde{f},\Phi)&:=\left( f(t), \Phi\right) + c\left( \theta, \Phi\right),
\end{align*}
for all $\Sigma_\theta,\Phi \in \calQ$. Moreover, the operators $\tilde{A}$ and $\tilde{B}$ are strongly monotone by the definitions (\ref{operador_a})--(\ref{operador_b}). Therefore,  following similar arguments as in the proof of  Theorem \ref{teorema_existenciayunicidad}, we conclude that there exists a unique solution of the auxiliary problem satisfying $\Sigma_\theta\in \tilde{V}$. Now, we consider the operator $\Psi: \tilde{V}\longrightarrow \tilde{V}$ given by,
\begin{align*}
\Psi\theta(t)= \int_0^t e^{-k(t-s)}\Sigma_\theta(s) ds,
\end{align*}     
where $\Sigma_\theta$ is the solution of (\ref{ecuacion_auxiliar})--(\ref{condicion_auxiliar}). Let $\theta_1,\theta_2, \Sigma_{\theta_1}, \Sigma_{\theta_2} \in \tilde{V}$, hence by (\ref{ecuacion_auxiliar}) we can find that,
\begin{align*}
a(\Sigma_{\theta_1}- \Sigma_{\theta_2}, \Sigma_{\theta_1}- \Sigma_{\theta_2}) + \frac{1}{2}\frac{\d}{\d t}\left(b({\Sigma}_{\theta_1} - {\Sigma}_{\theta_2}, \Sigma_{\theta_1}- \Sigma_{\theta_2}) \right) =  - c\left( \theta_1 - \theta_2,  \Sigma_{\theta_2}- \Sigma_{\theta_1}\right).
\end{align*}
Since the operator $a$ is  strongly monotone we find that,
\begin{align*}
\frac{1}{2}\frac{\d}{\d t}\left(b({\Sigma}_{\theta_1} - {\Sigma}_{\theta_2}, \Sigma_{\theta_1}- \Sigma_{\theta_2}) \right) \leq  - c\left( \theta_1 - \theta_2,  \Sigma_{\theta_2}- \Sigma_{\theta_1}\right).
\end{align*}
Integrating with respect to the time variable we find that,
\begin{align}\label{refA}
b({\Sigma}_{\theta_1} - {\Sigma}_{\theta_2}, \Sigma_{\theta_1}- \Sigma_{\theta_2})  \leq  - \int_0^t c\left( \theta_1 - \theta_2,  \Sigma_{\theta_2}- \Sigma_{\theta_1}\right) ds.
\end{align}
In what follows let $||\cdot||$ denote a norm induced by the inner product in $\calQ$. Moreover, by the continuity of the operator $c$ , there exists a constant $c_1>0$ such that
\begin{align}\nonumber
 &- \int_0^t c\left( \theta_1 - \theta_2,  \Sigma_{\theta_2}- \Sigma_{\theta_1}\right) ds \leq ||   \int_0^t c\left( \theta_1 - \theta_2,  \Sigma_{\theta_2}- \Sigma_{\theta_1}\right) ds|| 
 \\ \nonumber
 \qquad &\leq   \int_0^t ||c\left( \theta_1 - \theta_2,  \Sigma_{\theta_2}- \Sigma_{\theta_1}\right)|| ds \leq c_1 \int_0^t || \theta_1 - \theta_2|| || \Sigma_{\theta_2}- \Sigma_{\theta_1}|| ds
\\ \label{refA2}
& \qquad \leq \frac{c_1}{2} \int_0^t\left( || \theta_1 - \theta_2||^2 + || \Sigma_{\theta_2}- \Sigma_{\theta_1}||^2 \right)ds.
\end{align} 
On the other hand, since $b$ is a strongly monotone operator, there exists a constant $c_2>0$ such that
\begin{align*}
\frac{1}{2}b({\Sigma}_{\theta_1} - {\Sigma}_{\theta_2}, \Sigma_{\theta_1}- \Sigma_{\theta_2}) \geq c_2 ||{\Sigma}_{\theta_1} - {\Sigma}_{\theta_2}||^2, 
\end{align*}
hence, together with (\ref{refA})--(\ref{refA2}) we obtain the following inequality,
\begin{align*}
c_2 ||{\Sigma}_{\theta_1} - {\Sigma}_{\theta_2}||^2 \leq \frac{c_1}{2} \int_0^t || \theta_1 - \theta_2||^2ds + \frac{c_1}{2} \int_0^t|| \Sigma_{\theta_2}- \Sigma_{\theta_1}||^2 ds.
\end{align*}
Applying Gronwall's inequality we find that there exists a $C>0$ such that
\begin{align*}
||{\Sigma}_{\theta_1}(t) - {\Sigma}_{\theta_2}(t)||^2 \leq C  \int_0^t || \theta_1(s) - \theta_2(s)||^2ds.
\end{align*}
for all $t\in[0,T].$ Therefore,
\begin{align*}
||\Psi\theta_1(t) - \Psi\theta_2(t)||^2 \leq C  \int_0^t || \theta_1(s) - \theta_2(s)||^2ds.
\end{align*}
for all $t\in[0,T].$ Furthermore,
\begin{align*}
\frac{\d}{\d t}\left( \Psi\theta(t)\right)= \Sigma_\theta(t)- k \int_0^t e^{-k(t-s)}\Sigma_\theta(s)ds.
\end{align*} 
As a consequence, there exists a $n\in \mathbb{N}$ such that $||\Psi^n\theta_1- \Psi^n\theta_2||_{\tilde{V}}< || \theta_1-\theta_2||_{\tilde{V}}$. By the Banach fixed point theorem, there exists a unique $\theta^*$ such that $\Psi\theta^*(t)=\theta^*(t)$, $\forall t \in [0,T]$. Hence, the auxiliary problem  (\ref{ecuacion_auxiliar})--(\ref{condicion_auxiliar}) for $\theta=\theta^*$ is a reformulation of the original problem (\ref{ecuacion_operadores})--(\ref{condicion_operadores}). Therefore, there exists a unique solution of the original problem  satisfying $\Sigma\in\tilde{V}$. Moreover, if  $\dot{\fb}\in L^{2}(0,T; \calQ) $,  applying a modified version of the arguments in Corollary \ref{Cor_ex_un_reg} we conclude that $\Sigma\in W^{2,2}(0,T;\calQ)$.
\end{proof}

\section{Formal Asymptotic Analysis} \setcounter{equation}{0} \label{procedure}

In this section, we highlight some relevant steps in the construction of the formal asymptotic expansion of the scaled unknown variable $\bu(\var)$ including the characterization of the zeroth-order term, and the derivation of some key results which will lead to the two-dimensional equations of the viscoelastic shell problems. We define the scaled applied forces as,
 \begin{align*} 
 & \bbf(\varepsilon)(t, \bx)=\varepsilon^p\bbf^p(t,\bx) \ \forall \bx\in \Omega \ \textrm{and} \ \forall t\in[0,T], \\ 
 & \bbh(\varepsilon)(t, \bx)=\varepsilon^{p+1}\bbh^{p+1}(t,\bx) \ \forall  \bx\in \Gamma_+\cup\Gamma_- \ \textrm{and} \ \forall t\in[0,T],
 \end{align*}
where $p$ is a natural number that will show the order of the volume and surface forces, respectively. We substitute in  (\ref{ec_problema_escalado}) to obtain the following problem:
 \begin{problem}\label{problema_orden_fuerzas}
  Find $\bu(\varepsilon):[0,T]\times\Omega\longrightarrow \mathbb{R}^3$ such that,
  \begin{align} \nonumber
  & \bu(\varepsilon)(t,\cdot)\in V(\Omega) \forallt, \\ \nonumber
      &\int_{\Omega}A^{ijkl}(\varepsilon)e_{k||l}(\varepsilon;\bu(\varepsilon))e_{i||j}(\varepsilon;\bv)\sqrt{g(\varepsilon)} dx
   + \int_{\Omega} B^{ijkl}(\varepsilon)e_{k||l}(\varepsilon;\dot{\bu}(\varepsilon))e_{i||j}(\varepsilon;\bv) \sqrt{g(\varepsilon)}  dx
     \\\label{ecuacion_orden_fuerzas}
     &\quad= \int_{\Omega} \var^p \fb^{i,p}v_i \sqrt{g(\varepsilon)} dx + \int_{\Gamma_+\cup\Gamma_-} \var^{p}\bh^{i,p+1} v_i\sqrt{g(\varepsilon)}  d\Gamma  \quad \forall \bv\in V(\Omega), \aes,
     \\\displaystyle \nonumber
     & \bu(\var)(0,\cdot)= \bu_0(\var)(\cdot).
    \end{align} 
  \end{problem}
  
\begin{remark}
The existence and uniqueness of solution of Problem \ref{problema_orden_fuerzas} follows using analogous arguments as in  Theorem \ref{Theorema_exist_escalado_sin_orden}.
\end{remark}


 Assume that $\btheta\in\mathcal{C}^3(\bar{\omega};\mathbb{R}^3)$ and that the scaled unknown $\bu(\varepsilon)$ and scaled initial displacement $\bu_0(\var)$ admit an asymptotic expansion of the form
\begin{align}\label{desarrollo_asintotico}
\bu(\varepsilon)&= \bu^0 + \varepsilon \bu^1 + \varepsilon^2 \bu^2 +... \quad \textrm{with} \ \bu^0\neq \mathbf{0},  \\ \nonumber
\bu_0(\varepsilon)&= \bu_0^0 + \varepsilon \bu_0^1 + \varepsilon^2 \bu_0^2 +... . \quad \textrm{with} \ \bu^0_0=\bu^0(0,\cdot),
\end{align}
where $\bu^0(t)\in V(\Omega),$ $ \bu^q(t)\in [H^1(\Omega)]^3 \ae$ and $\bu^0_0\in V(\Omega),$ $ \bu^q_0\in [H^1(\Omega)]^3$ with $q\geq1$. The assumption (\ref{desarrollo_asintotico}) implies an asymptotic expansion of the scaled linear strain as follows
\begin{align*}
\eij(\var)\equiv\eij(\varepsilon;\bu(\varepsilon))&=\frac1{\varepsilon}\eij^{-1}+ \eij^0 + \varepsilon\eij^1 + \varepsilon^2\eij^2 + \varepsilon^3\eij^3+...
\end{align*}
where,

\begin{align*}
\left\{\begin{aligned}[c]
\eab^{-1}&=0, \\
  \eatres^{-1}&=\frac{1}{2}\d_3u_\alpha^0, 
   \\
  \edtres^{-1}&=\d_3u_3^0,
\end{aligned}\right.
\qquad \qquad \qquad
\left\{\begin{aligned}[c] 
 \eab^0&=\frac{1}{2}(\d_\beta u_\alpha^0 + \d_\alpha u_\beta^0) - \Gamma_{\alpha\beta}^\sigma u_\sigma^0 - b_{\alpha\beta}u_3^0, 
 \\
\eatres^0&=\frac{1}{2}(\d_3 u_\alpha^1 + \d_\alpha u_3^0) +   b_{\alpha}^\sigma u_\sigma^0, 
\\
\edtres^0&=\d_3u_3^1,
\end{aligned}\right.\qquad
\end{align*}
\begin{equation}\label{eij_terminos_expansion_u}
\end{equation}
\begin{align*}
\left\{\begin{aligned}[c]
\eab^1&=\frac{1}{2}(\d_\beta u_\alpha^1 + \d_\alpha u_\beta^1) - \Gamma_{\alpha\beta}^\sigma u_\sigma^1 - b_{\alpha\beta}u_3^1 + x_3(b_{\beta|\alpha}^\sigma u_\sigma^0  + b_\alpha^\sigma b_{\sigma\beta}u_3^0), \\
\eatres^1&=\frac{1}{2}(\d_3 u_\alpha^2 + \d_\alpha u_3^1) +   b_{\alpha}^\sigma u_\sigma^1 + x_3b_\alpha^\tau  b_\tau^\sigma u_\sigma^0, \\
\edtres^1&=\d_3 u_3^2.
\end{aligned}\right. \qquad \quad \qquad 
\end{align*}

In addition, the functions $\eij(\varepsilon;\bv) $ admit the following expansion,
\begin{align*}
\eij(\varepsilon;\bv)=\frac{1}{\varepsilon}\eij^{-1}(\bv) + \eij^0(\bv) + \varepsilon\eij^1(\bv)+...
\end{align*}
where,
\begin{align*}
\left\{\begin{aligned}[c]
\eab^{-1}(\bv)&=0,\\
 \eatres^{-1}(\bv)&=\frac{1}{2}\d_3v_\alpha, 
   \\
\edtres^{-1}(\bv)&=\d_3v_3,
\end{aligned}\right.
\qquad \qquad \quad
\left\{\begin{aligned}[c] 
 \eab^0(\bv)&=\frac{1}{2}(\d_\beta v_\alpha + \d_\alpha v_\beta) - \Gamma_{\alpha\beta}^\sigma v_\sigma - b_{\alpha\beta}v_3, 
 \\
\eatres^0(\bv)&=\frac{1}{2} \d_\alpha v_3 +   b_{\alpha}^\sigma v_\sigma, 
\\
\edtres^0(\bv)&=0,
\end{aligned}\right.
\end{align*}
\begin{equation}\label{eij_terminos_expansion}
\end{equation}
\begin{align*}
\left\{\begin{aligned}[c]
\eab^1(\bv)&=  x_3b_{\beta|\alpha}^\sigma v_\sigma  + x_3b_\alpha^\sigma b_{\sigma\beta}v_3, \\\nonumber
\eatres^1(\bv)&= x_3b_\alpha^\tau b_\tau^\sigma v_\sigma, \\\nonumber
\edtres^1(\bv)&=0.
\end{aligned}\right. \qquad \quad \qquad \qquad \qquad \qquad \qquad \qquad \qquad \qquad 
\end{align*}

Upon substitution on (\ref{ecuacion_orden_fuerzas}), we proceed to characterize the different terms involved in the asymptotic expansions considering different values for $p$, that is, taking different orders for the applied forces. Assume that
\begin{equation}\label{condicion_inicial_indep_3}
\d_3 \bu_0^0=\bcero,
\end{equation} this is, that the zeroth-order term of the initial displacement is independent of the transversal variable. Also,  we assume that the initial condition for the scaled linear strains is such that
\begin{equation} \label{condicion_inicial_def}
\eij^0(0,\cdot)=\eij^1(0,\cdot)=0,
\end{equation}
this is, the strains  at the beginning of the period of observation are of order $O(\var^2)$ at least (since by (\ref{eij_terminos_expansion_u}) and (\ref{condicion_inicial_indep_3})  we have that $\eij^{-1}(0,\cdot)=0$).


We shall now identify the leading term $\bu^0$ of the expansion (\ref{desarrollo_asintotico}) by canceling the other terms of the successive powers of $\var$ in the equations of the Problem \ref{problema_orden_fuerzas}. We will show that $\bu^0$ is solution of a two-dimensional problem of a viscoelastic membrane or flexural shell depending on several factors, and that the orders of applied forces are determined in both cases. Given $\beeta=(\eta_i)\in [H^1(\omega)]^3,$ let
 \begin{equation} \label{def_gab}
 \gab(\beeta):= \frac{1}{2}(\d_\beta\eta_\alpha + \d_\alpha\eta_\beta) - \Gamma_{\alpha\beta}^\sigma\eta_\sigma -  b_{\alpha\beta}\eta_3,
 \end{equation}
 denote the covariant components of the linearized change of metric tensor associated with a displacement field $\eta_i\ba^i$ of the surface $S$. Let us define the spaces, 
 \begin{align}\nonumber
 V(\omega)&:=\{\beeta=(\eta_i)\in[H^1(\omega)]^3 ; \eta_i=0 \ \textrm{on} \ \gamma_0 \}, \\ \nonumber
 V_0(\omega)&:=\{\beeta=(\eta_i)\in V(\omega), \gamma_{\alpha\beta}(\beeta)=0  \ \textrm{in} \ \omega \}, \\ \nonumber
 V_F(\omega)&:= \{ \beeta=(\eta_i) \in H^1(\omega)\times H^1(\omega)\times H^2(\omega) ; \eta_i=\d_\nu \eta_3=0 \ \textrm{on} \ \gamma_0, \gab(\beeta)=0 \en \omega \}.
 \end{align}

\begin{theorem}
Consider the Problem \ref{problema_orden_fuerzas} upon substitution of the expansion for $\bu(\var)$ proposed in (\ref{desarrollo_asintotico}). Identifying the terms multiplied by the same powers of $\var$ we find that:

\begin{enumerate}[label={{(\roman*)}}, leftmargin=0em ,itemindent=3em]
\item The main leading term $\bu^0$ of the asymptotic expansion is independent of the transversal variable $x_3$. Therefore, it can be identified with a function $\bxi^0\in [H^1(\omega)]^3$ such that $\bxi^0=\bcero$ on $\gamma_0$ and also we can identify $\bu_0^0$ with a function $\bxi^0_0(\cdot)=\bxi^0(0,\cdot)$. As a consequence,
\begin{equation*}
\eij^{-1}(t)=0 \en \Omega, \forallt.
\end{equation*}
\item The following zeroth-order terms of the scaled linearized strains are identified. On one hand,
\begin{equation*}
\eatres^0(t)=0 \en \Omega, \forallt.
\end{equation*}
On the other hand, if we assume  $\theta>0$ we obtain that 
\begin{align}\label{edtres_cero}
\edtres^0(t)= - \frac{\theta}{\theta + \rho} \left( a^{\alpha \beta }\eab^0(t) + \Lambda\int_0^te^{-k(t-s)}a^{\alpha\beta}\eab^0(s) ds \right), \en \Omega, \forallt,
\end{align}
  where,
\begin{align}\label{Constantes}
\Lambda:=\left(  \frac{\lambda}{\theta} - \frac{\lambda+ 2 \mu}{\theta + \rho}  \right), \quad k:=\frac{\lambda+ 2 \mu}{\theta + \rho} .
\end{align}
Moreover,
\begin{align*}
\dedtres^0(t)= - \frac{\lambda}{\theta + \rho} a^{\alpha \beta} \eab^0(t)- \frac{\lambda + 2\mu}{\theta + \rho} \edtres^0(t) - \frac{\theta}{\theta + \rho}  a^{\alpha\beta}\deab^0(t),
\end{align*}
 $\en \Omega \ae$.

\item  The following equality is verified,
\begin{align*}
&\frac{1}{2}\intO \a \est^0\eab^0(\beeta)\sqrt{a}dx + \frac{1}{2}\intO \b \dest^0 \eab^0(\beeta)\sqrt{a}dx 
 \\
 & \qquad- \frac{1}{2}\int_0^te^{-k(t-s)}\intO \c \est^0(s)\eab^0(\beeta)\sqrt{a}dx ds 
 \\ & \quad = \intO f^{i,0}\eta_i \sqrt{a}dx + \intG h^{i,1} \eta_i \sqrt{a} d \Gamma, \ \forall\beeta\in V(\omega) \aes,
\end{align*}
where  $\a$ , $\b$ and $\c$ denote the contravariant components of the   fourth order two-dimensional tensors,  defined as follows: 
 \begin{align} \label{tensor_a_bidimensional}
   \a&:=\frac{2\lambda\rho^2 + 4\mu\theta^2}{(\theta + \rho)^2}a^{\alpha\beta}a^{\sigma\tau} + 2\mu \ten, 
  \\ \label{tensor_b_bidimensional}
    \b&:=\frac{2\theta\rho}{\theta + \rho}a^{\alpha\beta}a^{\sigma\tau} + \rho\ten, 
     \\ \label{tensor_c_bidimensional}
   \c&:=\frac{2 \left(\theta \Lambda \right)^2}{\theta + \rho} a^{\alpha\beta}a^{\sigma\tau}.  
 \end{align}

 Moreover, 
\begin{align} \label{et3}
\eab^0(t)=\gab(\bxi^0(t)) \ \textrm{and} \ \eab^0(\beeta(t))=\gab(\beeta(t)) \ \textrm{for all}\ \beeta\in V(\omega) \forallt.
\end{align}
\item Assume that  $V_0(\omega)=\{\bcero\}$. Then we have that  $\bxi^0$ is solution of the two-dimensional limit equations, known as the viscoelastic membrane shell equations: Find $\bxi^0:[0,T] \times\omega \longrightarrow \mathbb{R}^3$ such that,
    \begin{align*}\nonumber 
    & \bxi^0(t,\cdot)\in V(\omega) \forallt,\\ \nonumber
   &\int_{\omega} \a\gst(\bxi^0)\gab(\beeta)\sqrt{a}dy +\int_{\omega}\b\gst(\dot{\bxi}^0)\gab(\beeta)\sqrt{a}dy
   \\ 
   &- \int_0^te^{-k(t-s)}\into \c \gst(\bxi^0(s))\gab(\beeta)\sqrt{a}dyds 
   \\ 
   &\quad=\int_{\omega}p^{i,0}\eta_i\sqrt{a}dy \ \forall \beeta=(\eta_i)\in V(\omega), \aes,
    \\
    &\bxi^0(0,\cdot)=\bxi^0_0(\cdot),
   \end{align*} 
   where,
   \begin{align}\label{p0}
   p^{i,0}(t):=\int_{-1}^{1}\fb^{i,0}(t)dx_3+h_+^{i,1}(t)+h_-^{i,1}(t) \ \textrm{and} \ h_{\pm}^{i,1}(t)=\bh^{i,1}(t,\cdot,\pm 1) \forallt.
   \end{align}

\item Assume that $ V_0(\omega)\neq\{\bcero\} $. We find that
\begin{align*}
\eij^0(t)&=0 \en \Omega, \Forallt,\\
\bxi^0(t)&\in V_F(\omega) \Forallt.
\end{align*}
Moreover, assume that $\bu^1(t)\in V(\Omega) \Forallt$. Then, there exists a function $\bxi^1(t)=(\xi_i^1(t))\in V(\omega) \Forallt$, such that 
\begin{align*}
u_\alpha^1(t)&=\xi_\alpha^1(t) - x_3( \d_\alpha\xi_3^0(t) + 2 b_\alpha^\sigma \xi_\sigma^0(t)),\\
u_3^1(t)&=\xi_3^1(t),
\end{align*}
$\Forallt.$ Also, the following first-order terms of the scaled linearized strains  are identified. On one hand,
\begin{align*}
\eatres^1(t)=0 \en \Omega, \Forallt.
\end{align*}
On the other hand, we obtain that,
\begin{align*}
\edtres^1(t)= - \frac{\theta}{\theta + \rho} \left( a^{\alpha \beta }\eab^1(t) + \Lambda\int_0^te^{-k(t-s)}a^{\alpha\beta}\eab^1(s) ds \right), \en \Omega, \Forallt,
\end{align*}
 and where $\Lambda$ and $k$ are defined as in (\ref{Constantes}). Moreover,
\begin{align*}
\dedtres^1(t)= - \frac{\lambda}{\theta + \rho} a^{\alpha \beta} \eab^1(t)- \frac{\lambda + 2\mu}{\theta + \rho} \edtres^1(t) - \frac{\theta}{\theta + \rho}  a^{\alpha\beta}\deab^1(t),
\end{align*}
 $\en \Omega, \ae$.
Furthermore, let
\begin{equation} \label{rab}
\rho_{\alpha\beta}(\beeta):= \d_{\alpha\beta}\eta_3 - \Gamma_{\alpha\beta}^\sigma \d_\sigma\eta_3 - b_\alpha^\sigma b_{\sigma\beta} \eta_3 + b_\alpha^\sigma (\d_\beta\eta_\sigma- \Gamma_{\beta\sigma}^\tau \eta_\tau) + b_\beta^\tau(\d_\alpha\eta_\tau-\Gamma_{\alpha\tau}^\sigma\eta_\sigma ) + b^\tau_{\beta|\alpha} \eta_\tau,
\end{equation}
denote the covariant components of the linearized change of curvature tensor associated with a displacement  field $\eta_i \ba^i$ of the surface $S$. Then
\begin{equation}\label{ref1}
\eab^1(t)=\gab(\bxi^1(t))- x_3\rab(\bxi^0(t)) \Forallt.
\end{equation}

\item Assume that $ V_0(\omega)\neq\{\bcero\} $, then
\begin{align*}
\bxi^1(t) \in V_0(\omega) \Forallt.
\end{align*}

\item For the case where $V_0(\omega)\neq\{\bcero\}$, we find  that $\bxi^0$ is solution of the  two-dimensional limit  equations known as viscoelastic flexural shell equations: Find $\bxi^0:(0,T) \times\omega \longrightarrow \mathbb{R}^3$ such that,
    \begin{align*}\nonumber
    & \bxi^0(t,\cdot)\in V_F(\omega) \forallt,\\ \nonumber
   &\frac{1}{3}\int_{\omega} \a\rst(\bxi^0)\rab(\beeta)\sqrt{a}dy +\frac{1}{3}\int_{\omega}\b\rst(\dot{\bxi}^0)\rab(\beeta)\sqrt{a}dy
   \\ \nonumber
   &- \frac{1}{3}\int_0^te^{-k(t-s)}\into \c \rst(\bxi^0(s))\rab(\beeta)\sqrt{a}dyds
   \\ 
   &\quad=\int_{\omega}p^{i,2}\eta_i\sqrt{a}dy \ \forall \beeta=(\eta_i)\in V_F(\omega), \aes,
    \\
    &\bxi^0(0,\cdot)=\bxi^0_0(\cdot),
   \end{align*} 

   where,
   \begin{align} \label{p2}
 p^{i,2}(t):=\int_{-1}^{1}\fb^{i,2}(t)dx_3+h_+^{i,3}(t)+h_-^{i,3}(t) \ \textrm{and} \ h_{\pm}^{i,3}(t)=\bh^{i,3}(t,\cdot,\pm 1) \forallt.
   \end{align}
\end{enumerate}
\end{theorem}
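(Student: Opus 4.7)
The plan is to substitute the Ansatz (\ref{desarrollo_asintotico}) together with the strain expansions (\ref{eij_terminos_expansion_u})--(\ref{eij_terminos_expansion}) and the tensor expansions (\ref{tensorA_tildes})--(\ref{tensorB_tildes}) into the variational equation (\ref{ecuacion_orden_fuerzas}) and match coefficients of each power of $\var$. The left-hand side begins at order $\var^{-2}$ and the right-hand side at order $\var^p$; to isolate each level I will use Theorem \ref{th_int_nula} together with carefully chosen test functions. Parts (i)--(iv) correspond to $p=0$ and parts (v)--(vii) to $p=2$.

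\textbf{Steps (i)--(ii).} At order $\var^{-2}$ only products $\eij^{-1}\cdot\eij^{-1}(\bv)$ survive, and the vanishing pattern $A^{\alpha\beta\sigma 3}(0)=A^{\alpha 333}(0)=B^{\alpha\beta\sigma 3}(0)=B^{\alpha 333}(0)=0$ from Theorem \ref{Th_comportamiento asintotico} decouples the contributions of $\eatres^{-1}$ and $\edtres^{-1}$. Testing with $\bv=(v_\alpha,0,0)$ and then $\bv=(0,0,v_3)$ produces two homogeneous linear ODEs in time whose unique solution with the initial data $\eij^{-1}(0)=0$ (a consequence of (\ref{condicion_inicial_indep_3})) is $\eij^{-1}=0$. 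This forces $\d_3\bu^0=\bcero$ and yields the identification $\bu^0=\bxi^0\in V(\omega)$. At order $\var^{-1}$, once $\eij^{-1}=0$ has been established, only $\eij^0\cdot\eij^{-1}(\bv)$ survives. Testing with $\bv=(v_\alpha,0,0)$ gives $2\mu a^{\alpha\sigma}\eatres^0+\rho a^{\alpha\sigma}\deatres^0=0$ which, with $\eatres^0(0)=0$ from (\ref{condicion_inicial_def}), forces $\eatres^0=0$. Testing with $\bv=(0,0,v_3)$ gives
\begin{align*}
(\theta+\rho)\dedtres^0+(\lambda+2\mu)\edtres^0=-\lambda\,a^{\alpha\beta}\eab^0-\theta\,a^{\alpha\beta}\deab^0.
\end{align*}
Solving this linear ODE with $\edtres^0(0)=0$ via the integrating factor $e^{kt}$ and integrating the term containing $\deab^0$ by parts in time (using $\eab^0(0)=0$) produces precisely (\ref{edtres_cero}); the expression for $\dedtres^0$ is then the ODE itself.

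\textbf{Steps (iii)--(iv).} At order $\var^0$ we choose $\bv\in V(\Omega)$ independent of $x_3$, identified with $\beeta\in V(\omega)$, so that $\eij^{-1}(\bv)=0$, $\eab^0(\bv)=\gab(\beeta)$, $\edtres^0(\bv)=0$. Only $A(0)$ and $B(0)$ contributions of the form $\ekl^0\cdot\eab^0(\bv)$ remain (since $\eatres^0=0$). Substituting the formulas of step (ii) for $\edtres^0$ and $\dedtres^0$ into the products $A^{\alpha\beta 33}(0)\edtres^0\gab(\beeta)$ and $B^{\alpha\beta 33}(0)\dedtres^0\gab(\beeta)$ and combining with $A^{\alpha\beta\sigma\tau}(0)\est^0\gab(\beeta)$ and $B^{\alpha\beta\sigma\tau}(0)\dest^0\gab(\beeta)$, the coefficients of $a^{\alpha\beta}a^{\sigma\tau}$ in front of $\est^0$, $\dest^0$ and of the memory kernel reorganize into exactly the components (\ref{tensor_a_bidimensional})--(\ref{tensor_c_bidimensional}), the tangential components $a^{\alpha\sigma}a^{\beta\tau}+a^{\alpha\tau}a^{\beta\sigma}$ being inherited directly from $A^{\alpha\beta\sigma\tau}(0),B^{\alpha\beta\sigma\tau}(0)$. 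This produces the identity asserted in (iii); integration in $x_3\in(-1,1)$ (a factor $2$ that cancels the $\tfrac12$) together with (\ref{p0}) yields (iv), where the unique solvability under $V_0(\omega)=\{\bcero\}$ follows from Theorem \ref{teorema_existencia_bidimensional} with the ellipticity of $\a,\b,\c$.

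\textbf{Steps (v)--(vii) and main obstacle.} When $V_0(\omega)\neq\{\bcero\}$ we set $p=2$, so the right-hand side vanishes at order $\var^0$. Taking $\beeta=\bxi^0$ in the identity of (iii), integrating in time, and handling the $\b\,\dest^0\eab^0$ and convolution terms by the energy-plus-Gronwall argument used in the proof of Theorem \ref{teorema_existencia_bidimensional}, one deduces $\gab(\bxi^0)=0$, hence $\bxi^0\in V_0(\omega)$; the additional regularity $\bxi^0\in V_F(\omega)$ follows as in the elastic flexural case of \cite{Ciarlet4b}. Once $\eij^0=0$, the $\var^{-1}$-level equation re-examined with $\d_3 v_i$ arbitrary yields $\d_3 u_3^1=0$ and $\d_3 u_\alpha^1=-\d_\alpha u_3^0-2b_\alpha^\sigma u_\sigma^0$, which, integrated in $x_3$ and combined with $\bu^1\in V(\Omega)$, provides the stated representation of $u_i^1$ through some $\bxi^1\in V(\omega)$; substituting these into the defining formulas (\ref{eij_terminos_expansion_u}) gives (\ref{ref1}). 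Repeating the ODE argument of step (ii) one order higher, with initial data $\eij^1(0)=0$, yields the formulas for $\eatres^1,\edtres^1,\dedtres^1$, while property (vi) follows by substituting $\beeta=\bxi^1$ in the $\var^2$-level analogue of the identity of (iii). Finally, collecting the $\var^2$ terms of (\ref{ecuacion_orden_fuerzas}), testing with $\beeta\in V_F(\omega)$ identified with $\bv$ independent of $x_3$, eliminating $\edtres^1,\dedtres^1$ via step (v), and integrating in $x_3$ (the factor $\tfrac13$ arising from $\int_{-1}^{1}x_3^2\,dx_3$) yields the flexural equation of (vii) with right-hand side (\ref{p2}). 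The central novelty over the elastic analysis of \cite{Ciarlet4b} is that the transversal constitutive relations for $\edtres^0$ and $\edtres^1$ are ODEs in time rather than algebraic identities; solving them with the prescribed initial conditions is what generates the exponential kernel $e^{-k(t-s)}$ and the tensor $\c$, and it requires a careful integration by parts in time to place the kernel in the form appearing in (\ref{edtres_cero}) and in the final variational identities.
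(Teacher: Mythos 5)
Your steps (i)--(v) are in substance the paper's argument: the same grouping of powers of $\var$, the same use of Theorem \ref{th_int_nula} with test functions having only one nonzero component, the invertibility of $(a^{\alpha\sigma})$ to decouple the tangential relations, and the integrating-factor/integration-by-parts solution of $(\theta+\rho)\dedtres^0+(\lambda+2\mu)\edtres^0=-\lambda a^{\alpha\beta}\eab^0-\theta a^{\alpha\beta}\deab^0$, which indeed reproduces (\ref{edtres_cero}) with the constants (\ref{Constantes}); your Gronwall argument for $\gab(\bxi^0)=0$ is the uniqueness argument the paper delegates to Theorem \ref{teorema_existencia_bidimensional}. (The only minor deviation is that for $\d_3 u^0_\alpha$ you use pointwise ODEs instead of the paper's energy argument, which is fine.)

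The genuine gap is in step (vii), with a related slip in (vi). Collecting the $\var^2$ terms and simply ``testing with $\beeta\in V_F(\omega)$ identified with $\bv$ independent of $x_3$'' does not yield the flexural equations, for two reasons. First, for such $\bv$ one has $\eatres^0(\bv)=b_\alpha^\sigma\eta_\sigma+\frac{1}{2}\d_\alpha\eta_3\neq 0$ in general, so the unknown second-order strains survive through the terms $4A^{\alpha 3\sigma 3}(0)\estres^2\eatres^0(\bv)$ and $4B^{\alpha 3\sigma 3}(0)\destres^2\eatres^0(\bv)$, together with the correction terms $\tilde{A}^{\alpha 3\sigma 3,1}\estres^1$ and $\tilde{B}^{\alpha 3\sigma 3,1}\destres^1$; none of these is ``eliminated via step (v)''. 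Second, this test pairing only produces $\eab^1(\beeta)=x_3\bigl(b^\sigma_{\beta|\alpha}\eta_\sigma+b_\alpha^\sigma b_{\sigma\beta}\eta_3\bigr)$ on the test side, which is not $-x_3\rab(\beeta)$, so the change-of-curvature tensor never appears in your identity. The missing idea is the paper's auxiliary test function $v_\alpha(\beeta)=x_3\bigl(2b_\alpha^\sigma\eta_\sigma+\d_\alpha\eta_3\bigr)$, $v_3(\beeta)=0$, inserted into the $\var^1$-level equation (where $f^{i,1}=h^{i,2}=0$), and the subsequent subtraction of the resulting identity from the $\var^2$-level one: this cancels exactly the $\estres^2$, $\destres^2$ and tilde-tensor terms and yields $\eab^1(\beeta)-\eab^0(\bv(\beeta))=-x_3\rab(\beeta)$, which combined with $\est^1=-x_3\rst(\bxi^0)$ and $\int_{-1}^{1}x_3^2\,dx_3=\frac{2}{3}$ gives the factor $\frac{1}{3}$ and the flexural forms. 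Relatedly, property (vi) is obtained from the terms multiplied by $\var^1$ (the membrane-type identity for $\bxi^1$ with vanishing right-hand side), not from a ``$\var^2$-level analogue'' as you state; at the $\var^2$ level the choice $\beeta=\bxi^1$ would again run into the unresolved $\estres^2$ terms.
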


\begin{proof} For the proof of this theorem firstly, we will take values for $p$ on the Problem \ref{problema_orden_fuerzas}. Then, we group terms multiplied by the same powers of $ \var$, canceling the terms of the expansion proposed.

\begin{enumerate}[label={{(\roman*)}}, leftmargin=0em ,itemindent=3em] 
\item Let $p=-2$ in (\ref{ecuacion_orden_fuerzas}). Hence, grouping the terms multiplied by $\var^{-2}$ (see (\ref{tensorA_tildes})--(\ref{tensorB_tildes})) we find that 
\begin{align}\nonumber
&\int_{\Omega} A^{ijkl}(0)\ekl^{-1}\eij^{-1}(\bv) \sqrt{a}dx + \int_{\Omega} B^{ijkl}(0)\dekl^{-1}\eij^{-1}(\bv) \sqrt{a}dx
\\ \label{et2}
&\quad=\int_{\Omega } f^{i,-2} v_i \sqrt{a} dx + \intG h^{i,-1} v_i \sqrt{a}d\Gamma.
\end{align}
Considering $\bv\in V(\Omega)$ independent of $x_3$ (see  (\ref{eij_terminos_expansion})),  the left-hand side of the  equation (\ref{et2}) cancels. Hence, in order to avoid compatibility conditions between the applied forces we must take $f^{i,-2}=0$ and $h^{i,-1}=0$. So that, back on  the equation (\ref{et2}), using (\ref{eij_terminos_expansion_u}), (\ref{eij_terminos_expansion}) and Theorem \ref{Th_comportamiento asintotico}, leads to
\begin{align}\nonumber
&\intO A^{ijkl}(0)\ekl^{-1}\eij^{-1}(\bv)\sqrt{a} dx + \intO B^{ijkl}(0)\dekl^{-1}\eij^{-1}(\bv)\sqrt{a}dx
\\\nonumber
&\quad = \intO\left(4A^{\alpha 3 \sigma 3}(0)\estres^{-1}\eatres^{-1}(\bv) + A^{3333}(0)\edtres^{-1} \edtres^{-1}(\bv)  \right) \sqrt{a}dx
\\\nonumber
& \qquad +\intO\left(4B^{\alpha 3 \sigma 3}(0)\destres^{-1}\eatres^{-1}(\bv) + B^{3333}(0)\dedtres^{-1} \edtres^{-1}(\bv)  \right) \sqrt{a}dx
\\\nonumber
&\quad = \intO \left( \mu a^{\alpha\sigma} \d_3u_\sigma^0 \d_3v_\alpha + (\lambda + 2\mu) \d_3u_3^0 \d_3 v_3 \right) \sqrt{a} dx
\\ \label{ecuacion_int_ui}
& \qquad + \intO \left( \frac{\rho}{2} a^{\alpha\sigma} \d_3\dot{u}_\sigma^0 \d_3v_\alpha + (\theta + \rho) \d_3\dot{u}_3^0 \d_3 v_3 \right) \sqrt{a} dx=0,
\end{align}
for all $\bv=(v_i)\in V(\Omega), \aes$. Let $\bv=(v_i)\in V(\Omega)$ such that $v_\alpha=0$. By the Theorem \ref{th_int_nula}, we obtain the following differential equation
\begin{equation*}
(\lambda + 2 \mu) \d_3 u_3^0 + (\theta + \rho) \d_3 \dot{u}_3^0=0.
\end{equation*}
This equation together with the initial condition $(\ref{condicion_inicial_indep_3})$, leads to
\begin{equation*}
\d_3u_3^0(t)=0 \en \Omega, \ \textrm{for all} \ t\in[0,T].
\end{equation*}
Now, taking $v_\alpha=u_\alpha^0$ in  (\ref{ecuacion_int_ui}), we have
\begin{align*}
\intO \mu a^{\alpha\sigma} \d_3u_\sigma^0 \d_3 u_\alpha^0 \sqrt{a} dx + \intO \frac{\rho}{2} a^{\alpha\sigma} \d_3 \dot{u}_\sigma^0\d_3u_\alpha^0 \sqrt{a}dx=0, \ \textrm{a.e in} \ (0,T),
\end{align*}
that is equivalent to
\begin{align*}
\intO \mu a^{\alpha\sigma} \d_3u_\sigma^0 \d_3 u_\alpha^0 \sqrt{a} dx + \frac{\d}{\d t}\intO \frac{\rho}{4} a^{\alpha\sigma} \d_3 {u}_\sigma^0\d_3u_\alpha^0 \sqrt{a}dx=0, \ \textrm{a.e in} \ (0,T).
\end{align*}
Since the matrix $\left(a^{\alpha\sigma}\right)$ is positive definite, we have
\begin{align*}
 \frac{\d}{\d t}\intO \frac{\rho}{4} a^{\alpha\sigma} \d_3 {u}_\sigma^0\d_3u_\alpha^0 \sqrt{a}dx\leq0, \ \textrm{a.e in} \ (0,T).
\end{align*}
By integrating with respect to the time variable and by (\ref{condicion_inicial_indep_3}), we deduce
\begin{align*}
\intO  a^{\alpha\sigma} \d_3 {u}_\sigma^0\d_3u_\alpha^0 \sqrt{a}dx\leq0 \Forallt,
\end{align*}
and using again the positive definiteness of $\left(a^{\alpha\sigma}\right)$ we conclude 
\begin{equation*}
\d_3 u_\alpha^0(t)=0 \en \Omega,\Forallt.
\end{equation*}
Therefore, we have found that the main term $\bu^0$ of the asymptotic expansion is independent of the transversal variable$\Forallt$, hence, it can be identified with a function $\bxi^0(t)\in [H^1(\omega)]^3 \Forallt$ such that $\bxi^0=\bcero$ on $\gamma_0$, this is, $\bxi^0(t)\in V(\omega) \Forallt$. Moreover, as $\bu^0_0$ does not depend on $x_3$ as well by (\ref{condicion_inicial_indep_3}), we can identify $\bu_0^0$ with a  function $\bxi^0_0\in V(\omega)$ and it is verified that $\bxi^0_0(\cdot)=\bxi^0(0,\cdot)$.
Moreover, by (\ref{eij_terminos_expansion}) we obtain that
\begin{equation*}
\eij^{-1}(t)=0 \en \Omega, \Forallt.
\end{equation*}

\item Let now $p=-1$ in (\ref{ecuacion_orden_fuerzas}). Grouping the terms multiplied by $\var^{-1}$, we find (taking into account the results from the previous step $(i)$) that
\begin{align}\nonumber
&\intO A^{ijkl}(0)\ekl^0\eij^{-1}(\bv)\sqrt{a}dx + \intO B^{ijkl}(0)\dekl^0\eij^{-1}(\bv)\sqrt{a}dx 
\\ \label{ec_step2}
& \quad=\intO f^{i,-1}v_i \sqrt{a}dx + \intG h^{i,0}v_i \sqrt{a} d\Gamma,
\end{align}
for all $\bv\in V(\Omega), \aes$. Analogously to step $(i)$, considering a test function $\bv$ independent of $x_3$, we obtain  that $f^{i,-1}$ and $h^{i,0}$ must be zero. Therefore, from the left-hand side of the last equation we have
\begin{align} \nonumber
&\intO A^{ijkl}(0)\ekl^0\eij^{-1}(\bv)\sqrt{a}dx + \intO B^{ijkl}(0)\dekl^0\eij^{-1}(\bv)\sqrt{a}dx 
\\\nonumber
&\quad =\intO 4A^{\alpha 3 \sigma 3}(0)\eatres^0\estres^{-1}(\bv)\sqrt{a}dx + \intO\left( A^{\alpha\beta 33}(0)\eab^0 + A^{3333}(0)\edtres^0  \right)\edtres^{-1}(\bv) \sqrt{a}dx
\\\nonumber
& \qquad + \intO 4B^{\alpha 3 \sigma 3}(0)\deatres^0\estres^{-1}(\bv)\sqrt{a}dx + \intO\left( B^{\alpha\beta 33}(0)\deab^0 + B^{3333}(0)\dedtres^0  \right)\edtres^{-1}(\bv) \sqrt{a}dx
\\\nonumber 
&\quad= \intO \left(2\mu a^{\alpha \sigma}\eatres^0\d_3v_\sigma + \left(\lambda a^{\alpha\beta}\eab^0+ (\lambda + 2\mu)\edtres^0 \right)\d_3v_3\right)\sqrt{a}dx
\\ \label{ecuacion_integral1}
&\qquad+ \intO \left(\rho a^{\alpha \sigma}\deatres^0\d_3v_\sigma + \left(\theta a^{\alpha\beta}\deab^0+ (\theta + \rho)\dedtres^0 \right)\d_3v_3\right)\sqrt{a}dx=0.
\end{align}
On one hand, if we take $\bv\in V(\Omega)$ such that $v_2=v_3=0$ and using the Theorem \ref{th_int_nula}, we have
\begin{align} \label{ec_dif1}
2\mu a^{\alpha 1} \eatres^0 + \rho a^{\alpha 1} \deatres^0 =0 \aes.
\end{align}
On the other hand, if we take $\bv\in V(\Omega)$ such that $v_1=v_3=0$ and using the Theorem \ref{th_int_nula}, we have
\begin{align} \label{ec_dif2}
2\mu a^{\alpha 2} \eatres^0 + \rho a^{\alpha 2} \deatres^0 =0 \aes.
\end{align}

Multiplying (\ref{ec_dif1}) by $a^{22}$ and (\ref{ec_dif2}) by $-a^{21}$ and adding both expressions we have
\begin{align*}
2\mu \left(a^{22}a^{11} - a^{21}a^{12} \right) e_{1||3}^0 + \rho \left( a^{22}a^{11} - a^{21}a^{12}\right)\dot{e}_{1||3}^0 = 2\mu ae_{1||3}^0  + \rho a\dot{e}_{1||3}^0 =  0,
\end{align*}
$\aes$, by (\ref{definicion_a}). Now, by the initial condition in (\ref{condicion_inicial_def}) we conclude
\begin{align*}
e_{1||3}^0(t) =0 \en \Omega ,\Forallt.
\end{align*}
Multiplying (\ref{ec_dif1}) by $a^{12}$ and (\ref{ec_dif2}) by $-a^{11}$ and adding both expressions we have
\begin{align*}
2\mu ae_{2||3}^0  + \rho a\dot{e}_{2||3}^0 =  0 \aes,
\end{align*}
 Now, by the initial condition in (\ref{condicion_inicial_def}) we conclude
\begin{align*}
e_{2||3}^0(t) =0 \en \Omega ,\Forallt.
\end{align*}
Taking  $\bv\in V(\Omega)$ in (\ref{ecuacion_integral1}) such that $v_\alpha=0$, we obtain
\begin{align*} \nonumber
&\intO  \left(\lambda a^{\alpha\beta}\eab^0+ (\lambda + 2\mu)\edtres^0 \right)\d_3v_3\sqrt{a}dx
\\ 
&\qquad + \intO  \left(\theta a^{\alpha\beta}\deab^0+ (\theta + \rho)\dedtres^0 \right)\d_3v_3\sqrt{a}dx=0,
\end{align*}
for all $v_3\in H^1(\Omega)$ with $v_3=0 \en \Gamma_0, \aes$. By Theorem \ref{th_int_nula}, we obtain the following differential equation
\begin{align} \label{ecuacion_casuistica}
\lambda a^{\alpha\beta} \eab^0 + (\lambda+2\mu) \edtres^0 +\theta a^{\alpha \beta} \deab^0 + (\theta + \rho) \dedtres^0=0.
\end{align}
\begin{remark} \label{nota_desvio}
Note that removing time dependency and viscosity, that is taking $\theta=\rho=0$, the equation leads to the one studied in \cite{Ciarlet4b}, that is, the elastic case. 
\end{remark}
In order to solve the equation (\ref{ecuacion_casuistica}) in the more general case, we assume that the viscosity coefficient $\theta$ is strictly positive. Moreover, we can prove that this equation is equivalent to
\begin{align*} 
\theta e^{-\frac{\lambda}{\theta}t} \frac{\d}{\d t}\left(a^{\alpha\beta}\eab^0 e^{\frac{\lambda}{\theta}t}\right)=-\left(\theta + \rho \right)e^{-\frac{\lambda + 2\mu}{\theta + \rho}t} \frac{\d}{\d t}\left(\edtres^0e^{\frac{\lambda + 2\mu}{\theta + \rho}t}\right).
\end{align*}
Integrating with respect to the time variable and using (\ref{condicion_inicial_def}) we find that,
\begin{align*}
\edtres^0e^{\frac{\lambda + 2\mu}{\theta + \rho}t}= - \frac{\theta}{\theta + \rho} \int_0^t e^{\left(\frac{\lambda +2 \mu}{\theta + \rho} - \frac{\lambda}{\theta}\right) s}  \frac{\d}{\d s} \left(a^{\alpha\beta}\eab^0(s) e^{\frac{\lambda}{\theta}s}\right)ds,
\end{align*}
integrating by parts 
and simplifying we conclude that,
\begin{align*}
\edtres^0(t)= - \frac{\theta}{\theta + \rho} \left( a^{\alpha \beta }\eab^0(t) + \Lambda\int_0^te^{-k(t-s)}a^{\alpha\beta}\eab^0(s) ds \right),
\end{align*}
in $\Omega$,$\Forallt$, with the definitions introduced in (\ref{Constantes}). Moreover, from (\ref{ecuacion_casuistica}) we obtain that,
\begin{align*}
\dedtres^0(t)= - \frac{\lambda}{\theta + \rho} a^{\alpha \beta} \eab^0(t)- \frac{\lambda + 2\mu}{\theta + \rho} \edtres^0(t) - \frac{\theta}{\theta + \rho}  a^{\alpha\beta}\deab^0(t),
\end{align*}
 $\en \Omega, \ae$.

\item Let $p=0$ in (\ref{ecuacion_orden_fuerzas}). Grouping the terms multiplied by $\var^0$, taking into account (\ref{tensorA_tildes})--(\ref{tensorB_tildes}) and by step $(i)$ we find 
\begin{align}\nonumber
&\intO A^{ijkl}(0) \left(\ekl^0 \eij^0(\bv) + \ekl^1\eij^{-1}(\bv)  \right) \sqrt{a} dx + \intO \tilde{A}^{ijkl,1}\ekl^0\eij^{-1}(\bv) dx
\\ \nonumber
 &\qquad  +  \intO B^{ijkl}(0) \left(\dekl^0 \eij^0(\bv) + \dekl^1\eij^{-1}(\bv)  \right) \sqrt{a} dx + \intO \tilde{B}^{ijkl,1}\dekl^0\eij^{-1}(\bv)dx
 \\ \label{ec_step3}
 & \quad = \intO f^{i,0} v_i \sqrt{a} dx + \intG h^{i,1}v_i \sqrt{a} d\Gamma,
\end{align}
for all $\bv\in V(\Omega), \aes$. Taking  $\bv\in V(\Omega)$ such that it is independent of the transversal variable $x_3$, this is, such that we can identify $\bv$ with a function $\beeta\in V(\omega)$, we have by (\ref{eij_terminos_expansion}) that $\eij^{-1}(\bv)=0$. Moreover, since $\eatres^0=0$ by step $(ii)$, we have
\begin{align}\nonumber
&\intO A^{ijkl}(0)\ekl^0 \eij^0(\beeta) \sqrt{a} dx + \intO B^{ijkl}(0)\dekl^0\eij^0(\beeta) \sqrt{a} dx \\ \nonumber
& \quad = \intO \left( \lambda a^{\alpha \beta} a^{\sigma \tau} + \mu (a^{\alpha \sigma}a^{\beta \tau} + a^{\alpha \tau}a^{\beta\sigma})  \right) \est^0 \eab^0(\beeta) \sqrt{a} dx 
 + \intO \lambda a^{\alpha\beta}\edtres^0 \eab^0 (\beeta)\sqrt{a} dx
\\ \nonumber
& \qquad + \intO \left( \theta a^{\alpha \beta} a^{\sigma \tau} + \frac{\rho}{2} (a^{\alpha \sigma}a^{\beta \tau} + a^{\alpha \tau}a^{\beta\sigma})  \right) \dest^0 \eab^0(\beeta) \sqrt{a} dx
+ \intO \theta a^{\alpha\beta}\dedtres^0 \eab^0 (\beeta)\sqrt{a} dx 
\\ \label{ref2}
&\quad = \intO f^{i,0} \eta_i \sqrt{a} dx + \intG h^{i,1}\eta_i \sqrt{a} d\Gamma .
\end{align}
Using the expressions of $\edtres$ and its time derivative found in step $(ii)$, we have that
\begin{align*}\nonumber
 &\intO \left( \lambda a^{\alpha \beta} a^{\sigma \tau} + \mu (a^{\alpha \sigma}a^{\beta \tau} + a^{\alpha \tau}a^{\beta\sigma})  \right) \est^0 \eab^0(\beeta) \sqrt{a} dx 
 + \intO \lambda a^{\alpha\beta}\edtres^0 \eab^0 (\beeta)\sqrt{a} dx
\\ \nonumber
& \qquad + \intO \left( \theta a^{\alpha \beta} a^{\sigma \tau} + \frac{\rho}{2} (a^{\alpha \sigma}a^{\beta \tau} + a^{\alpha \tau}a^{\beta\sigma})  \right) \dest^0 \eab^0(\beeta) \sqrt{a} dx
+ \intO \theta a^{\alpha\beta}\dedtres^0 \eab^0 (\beeta)\sqrt{a} dx 
\\ 
&\quad =  \intO \left( \lambda a^{\alpha \beta} a^{\sigma \tau} + \mu (a^{\alpha \sigma}a^{\beta \tau} + a^{\alpha \tau}a^{\beta\sigma})  \right) \est^0 \eab^0(\beeta) \sqrt{a} dx 
 \\
 & \qquad + \intO \left( \theta a^{\alpha \beta} a^{\sigma \tau} + \frac{\rho}{2} (a^{\alpha \sigma}a^{\beta \tau} + a^{\alpha \tau}a^{\beta\sigma})  \right) \dest^0 \eab^0(\beeta) \sqrt{a} dx
\\ \nonumber
& \qquad + \intO \left(\lambda - \theta \frac{\lambda + 2\mu}{\theta + \rho } \right) \left( - \frac{\theta}{\theta + \rho} \left( a^{\sigma \tau }\est^0 + \Lambda\int_0^te^{-k(t-s)}a^{\sigma \tau}\est^0(s) ds \right)  \right) a^{\alpha\beta} \eab^0(\beeta) \sqrt{a} dx
\\
& \qquad - \intO \frac{\theta}{\theta + \rho} \left( \lambda a^{\sigma \tau}\est^0 + \theta a^{\sigma \tau}\dest^0  \right) a^{\alpha \beta} \eab^0 (\beeta) \sqrt{a} dx,
\end{align*}
which is equivalent to,
\begin{align*}
&\intO \left( \left(\lambda - \frac{\theta}{\theta + \rho} \left( \theta \Lambda + \lambda \right)\right)a^{\alpha\beta}a^{\sigma \tau} + \mu\ten  \right) \est^0\eab^0(\beeta)\sqrt{a}dx
\\
&\qquad + \intO \left( \frac{\theta \rho }{\theta + \rho} a^{\alpha \beta} a^{\sigma \tau}  + \frac{\rho}{2}\ten  \right) \dest^0\eab^0(\beeta)\sqrt{a}dx
\\
& \qquad - \intO \frac{\left( \theta \Lambda\right)^2}{\theta + \rho} \int_0^t e^{-k(t-s)}a^{\sigma \tau} \est^0(s) ds a^{\alpha \beta} \eab^0(\beeta) \sqrt{a} dx
\\ 
&\quad = \intO f^{i,0} \eta_i \sqrt{a} dx + \intG h^{i,1}\eta_i \sqrt{a} d\Gamma,
\end{align*}
hence, we obtain that
\begin{align*}
&\frac{1}{2}\intO \a \est^0\eab^0(\beeta)\sqrt{a}dx + \frac{1}{2}\intO \b \dest^0 \eab^0(\beeta)\sqrt{a}dx 
 \\
 & \qquad- \frac{1}{2}\int_0^te^{-k(t-s)}\intO \c \est^0(s)\eab^0(\beeta)\sqrt{a}dx ds 
 \\ & \quad = \intO f^{i,0}\eta_i \sqrt{a}dx + \intG h^{i,1} \eta_i \sqrt{a} d \Gamma, \ \forall\beeta\in V(\omega) \aes,
\end{align*}
where  $\a$ , $\b$ and $\c$ denote the contravariant components of the fourth order two-dimensional   tensors, defined in (\ref{tensor_a_bidimensional})--(\ref{tensor_c_bidimensional}).

Note that if $\beeta=(\eta_i)\in H^1(\omega)\times H^1(\omega) \times L^2(\omega),$ then
\begin{align*}
\gab(\beeta)\in L^2(\omega).
\end{align*}
Hence, the equalities in (\ref{et3})
\begin{align*}
\eab^0(t)=\gab(\bxi^0(t)) \ \textrm{and} \ \eab^0(\beeta(t))=\gab(\beeta(t)) \ \textrm{for all}\ \beeta\in V(\omega) \Forallt,
\end{align*}
 follow from the definitions (\ref{eij_terminos_expansion_u}), (\ref{eij_terminos_expansion}) and (\ref{def_gab}).

\item Assume that $V_0(\omega)=\{\bcero\}$. By the previous step we have the following variational problem: 

 Find $\bxi^0:[0,T] \times\omega \longrightarrow \mathbb{R}^3$ such that, 
\begin{align}\nonumber
&\bxi^0(t)\in V(\omega) \forallt,
\\ \nonumber
&\into \a \gst(\bxi^0)\gab(\beeta)\sqrt{a}dy + \into \b \gst(\dot{\bxi}^0) \gab(\beeta)\sqrt{a}dy
\\ \nonumber
 & \qquad- \int_0^te^{-k(t-s)}\into \c \gst(\bxi^0(s))\gab(\beeta)\sqrt{a}dy ds 
 \\ \label{ec_paso4}
 & \quad = \into p^{i,0}\eta_i \sqrt{a}dy, \ \forall\beeta\in V(\omega) \aes,
 \\ \nonumber
 & \bxi^0(0,\cdot)=\bxi_0^0(\cdot),
\end{align}
where $p^{i,0}$ is defined in (\ref{p0}).  This problem will be known as the two-dimensional variational problem for a viscoelastic membrane shell.

\item Assume that $V_0(\omega)\neq\{\bcero\}$. Taking $\beeta\in (V_0(\omega)\setminus\{\bcero\})$ in (\ref{ec_paso4}) we have that
\begin{align*}
\into p^{i,0}\eta_i \sqrt{a}dy = \intO f^{i,0}\eta_i \sqrt{a}dx + \intG h^{i,1} \eta_i \sqrt{a} d \Gamma=0.
\end{align*}
Hence, in order to avoid compatibility conditions between the applied forces we must take $f^{i,0}=0$ and $h^{i,1}=0$. Therefore, taking $\beeta=\bxi^0$ in the equation (\ref{ec_paso4}) leads to
\begin{align*} \nonumber
&\into \a \gst(\bxi^0)\gab(\bxi^0)\sqrt{a}dy + \into \b \gst(\dot{\bxi}^0) \gab(\bxi^0)\sqrt{a}dy
\\ 
& \qquad- \int_0^te^{-k(t-s)}\into \c \gst(\bxi^0(s))\gab(\bxi^0)\sqrt{a}dy ds =0.
\end{align*}
By  (\ref{condicion_inicial_def}) and the  first equality in (\ref{et3}), we have that $\gab(\bxi^0(0))=0.$ This initial condition  together with  the Theorem \ref{teorema_existencia_bidimensional} imply that $\gab(\bxi^0(t))=0 \Forallt$, that is, $\bxi^0\in V_0(\omega)$. Therefore, again by (\ref{et3}), we find that $\eab^0=\gab(\bxi^0)=0$. Moreover, by (\ref{eij_terminos_expansion_u}) and (\ref{edtres_cero}) we have that
\begin{equation*}
\d_3 u_3^1(t)=\edtres^0(t)=0 \en \Omega, \Forallt.
\end{equation*}
By the definition of $\eatres^0$ in (\ref{eij_terminos_expansion_u}) and steps $(i)$--$(ii)$ we have
\begin{align*}
\eatres^0=\frac{1}{2}\left( \d_\alpha \xi^0_3+ \d_3 u_\alpha^1 \right)+ b_\alpha^\sigma \xi^0_\sigma =0,
\end{align*}
hence,
\begin{align*}
\d_3 u_\alpha^1(t)=- \left(\d_\alpha \xi_3^0(t) + 2b_\alpha^\sigma \xi^0_\sigma(t)\right)  \en \Omega, \Forallt.
\end{align*}
Since we are assuming that $\bu^1(t)\in V(\Omega) \Forallt$ and since $\bxi^0$ is independent of $x_3$ by step $(i)$, there exists a field $\bxi^1(t)\in V(\omega) \Forallt$ such that
\begin{align*}
u_\alpha^1 (t)&= \xi^1_\alpha(t) - x_3 \left(\d_\alpha\xi_3^0(t) + 2 b_\alpha^\sigma \xi_\sigma^0(t) \right),
\\
u_3^1(t)&=\xi_3^1(t),
\end{align*}
in $\Omega, \Forallt$. Notice that this implies that $\xi^0_3(t)\in H^2(\Omega) \Forallt$. Now, since $\xi_\alpha^0=0$ on $\gamma_0$, then $\d_\nu\xi_3^0=0$, where $\d_\nu$ denotes the outer normal derivative along the boundary. Therefore, we have $\bxi^0(t)\in V_F(\omega) \Forallt$. Since $\eij^0=0$, coming back to the terms multiplied by $\var^0$ (see (\ref{ec_step3}) in step $(iii)$), we have 
\begin{align*}
\intO A^{ijkl}(0) \ekl^1\eij^{-1}(\bv)  \sqrt{a} dx +   \intO B^{ijkl}(0)  \dekl^1\eij^{-1}(\bv) \sqrt{a} dx =0,
\end{align*}
for all $\bv\in V(\Omega), \aes$. Notice that this equation is analogous to the one obtained in the step $(ii)$ involving the terms $\eij^1$ instead of the terms $\eij^0$ (see (\ref{ec_step2})). Therefore, using similar arguments, we conclude that 
\begin{align*}
\eatres^1(t)=0 \en \Omega, \Forallt,
\end{align*}
 and moreover,
 \begin{align*}
 \edtres^1(t)= - \frac{\theta}{\theta + \rho} \left( a^{\alpha \beta }\eab^1(t) + \Lambda\int_0^te^{-k(t-s)}a^{\alpha\beta}\eab^1(s) ds \right), \en \Omega, \Forallt,
 \end{align*}
  where $\Lambda$ and $k$ are defined  in (\ref{Constantes}). Furthermore,
 \begin{align*}
 \dedtres^1(t)= - \frac{\lambda}{\theta + \rho} a^{\alpha \beta} \eab^1(t)- \frac{\lambda + 2\mu}{\theta + \rho} \edtres^1(t) - \frac{\theta}{\theta + \rho} a^{\alpha\beta}\deab^1(t),
 \end{align*}
  $\en \Omega, \ae$.

Now by the the definitions in (\ref{eij_terminos_expansion_u}) in terms of $\xi^0_i$ and $\xi^1_i$ and replacing $\d_\beta b_\alpha^\sigma$ terms from (\ref{b_barra}), after some computations  we have that
\begin{align} \nonumber
\eab^1&= \frac{1}{2} \left( \d_\beta \xi_\alpha^1 + \d_\alpha \xi_\beta^1  \right) - \Gamma_{\alpha\beta}^\sigma \xi_\sigma^1 - b_{\alpha\beta} \xi_3^1 
- x_3\left( \d_{\alpha\beta}\xi_3^0 - \Gamma_{\alpha\beta}^\sigma\d_\sigma\xi_3^0 - b_\alpha^\sigma b_{\sigma\beta}\xi_3^0 \right.
\\ \label{ec_paso5} 
&\qquad \left.
+ b_\alpha^\sigma\left(\d_\beta\xi_\sigma^0 - \Gamma_{\beta\sigma}^\tau\xi_\tau^0    \right)   + b_\beta^\tau\left( \d_\alpha\xi_\tau^0 - \Gamma_{\alpha\tau}^\sigma \xi_\sigma^0 \right)+ b^\tau_{\beta|\alpha}\xi_\tau^0 \right).
\end{align}
Note that if $\beeta=(\eta_i)\in H^1(\omega)\times H^1(\omega) \times L^2(\omega),$ then (see (\ref{rab}))
\begin{align*}
\rho_{\alpha\beta} (\beeta) \in L^2(\Omega).
\end{align*}
Hence, by (\ref{def_gab}) for $\beeta=\bxi^1(t)$ and (\ref{rab}) for $\beeta=\bxi^0(t)$, it follows from (\ref{ec_paso5}) the equality 
\begin{equation*}
\eab^1(t)=\gab(\bxi^1(t))- x_3\rab(\bxi^0(t)) \en \Omega ,\Forallt.
\end{equation*}

\item Assume that $V_0(\Omega)\neq\{\bcero\}.$ Let $p=1$ in (\ref{ecuacion_orden_fuerzas}). Grouping the terms multiplied by $\var$, taking into account steps $(i)-(v)$ we have
\begin{align}\nonumber
&\intO A^{ijkl}(0)\left(\ekl^1\eij^0(\bv) + \ekl^2\eij^{-1}(\bv) \right) \sqrt{a} dx+ \intO \tilde{A}^{ijkl,1}\ekl^1\eij^{-1}(\bv)dx 
\\ \nonumber
&\qquad + \intO B^{ijkl}(0)\left(\dekl^1\eij^0(\bv) + \dekl^2\eij^{-1}(\bv) \right) \sqrt{a} dx+ \intO \tilde{B}^{ijkl,1}\dekl^1\eij^{-1}(\bv)dx 
\\ \label{ref3}
&\quad =\intO f^{i,1}v_i \sqrt{a}dx + \intG h^{i,2} v_i \sqrt{a}d\Gamma,
\end{align}
for all $\bv\in V(\Omega), \aes$. Taking $\bv=\beeta\in V(\omega)$, this is, $\bv$ independent of $x_3$, by (\ref{eij_terminos_expansion}) we obtain
\begin{align*}
&\intO A^{ijkl}(0)\ekl^1\eij^0(\beeta) \sqrt{a} dx
+ \intO B^{ijkl}(0)\dekl^1\eij^0(\beeta)  \sqrt{a} dx
\\
&\quad =\intO f^{i,1}\eta_i \sqrt{a}dx + \intG h^{i,2} \eta_i \sqrt{a}d\Gamma,
\end{align*}
for all $\beeta\in V(\omega), \aes$. Since $\eatres^1=0$ by $(v)$ we obtain
\begin{align*}
&\intO A^{ijkl}(0)\ekl^1\eij^0(\beeta) \sqrt{a} dx
+ \intO B^{ijkl}(0)\dekl^1\eij^0(\beeta)  \sqrt{a} dx
\\
&\quad =\intO \left( \lambda a^{\alpha \beta} a^{\sigma \tau} + \mu (a^{\alpha \sigma}a^{\beta \tau} + a^{\alpha \tau}a^{\beta\sigma})  \right) \est^1 \eab^0(\beeta) \sqrt{a} dx 
 + \intO \lambda a^{\alpha\beta}\edtres^1 \eab^0 (\beeta)\sqrt{a} dx
\\ \nonumber
& \qquad + \intO \left( \theta a^{\alpha \beta} a^{\sigma \tau} + \frac{\rho}{2} (a^{\alpha \sigma}a^{\beta \tau} + a^{\alpha \tau}a^{\beta\sigma})  \right) \dest^1 \eab^0(\beeta) \sqrt{a} dx
+ \intO \theta a^{\alpha\beta}\dedtres^1 \eab^0 (\beeta)\sqrt{a} dx 
\\
&\quad = \intO f^{i,1} \eta_i \sqrt{a} dx + \intG h^{i,2}\eta_i \sqrt{a} d\Gamma ,
\end{align*}
for all $\beeta\in V(\omega), \aes$, which is analogous to the expression obtained in (\ref{ref2}). Therefore, following the same arguments made there, taking into account $(v)$,  we find that
\begin{align}\nonumber
&\into \a \gst(\bxi^1)\gab(\beeta)\sqrt{a}dy + \into \b \gst(\dot{\bxi}^1) \gab(\beeta)\sqrt{a}dy
\\ \nonumber
& \qquad- \int_0^te^{-k(t-s)}\into \c \gst(\bxi^1(s))\gab(\beeta)\sqrt{a}dy ds 
 \\ \label{et6}
&\quad = \intO f^{i,1} \eta_i \sqrt{a} dx + \intG h^{i,2}\eta_i \sqrt{a} d\Gamma ,
\end{align}
for all $\beeta\in V(\omega), \aes$,  where the contravariant components of the fourth order two-dimensional tensors $\a,\b,\c$ are defined  in  (\ref{tensor_a_bidimensional})--(\ref{tensor_c_bidimensional}). Taking $\beeta\in \left( V_0(\omega)\setminus\{\bcero\} \right)$ we have that
\begin{align*}
\intO f^{i,1} \eta_i \sqrt{a} dx + \intG h^{i,2}\eta_i \sqrt{a} d\Gamma =0,
\end{align*}
hence, in order to avoid compatibility conditions between the applied forces we must take $f^{i,1}=0$ and $h^{i,2}=0$. Therefore, letting $\beeta=\bxi^1$ in (\ref{et6}) leads to
\begin{align*}
&\into \a \gst(\bxi^1)\gab(\bxi^1)\sqrt{a}dy + \into \b \gst(\dot{\bxi}^1) \gab(\bxi^1)\sqrt{a}dy
\\
& \qquad- \int_0^te^{-k(t-s)}\into \c \gst(\bxi^1(s))\gab(\bxi^1(t))\sqrt{a}dy ds =0.
\end{align*}
By (\ref{condicion_inicial_def}) and the relation (\ref{ref1}) found in the step $(v)$, we obtain that $ \gab(\bxi^1(0))=0$, hence, by the Theorem \ref{teorema_existencia_bidimensional} we deduce that $ \gab(\bxi^1(t))=0 \Forallt$. Therefore,
\begin{align*}
\bxi^1(t)\in V_0(\omega) \Forallt.
\end{align*}

\item On one hand, coming back to the equation (\ref{ref3}), with $f^{i,1}=0$ and $h^{i,2}=0$, leads to
\begin{align}\nonumber
&\intO A^{ijkl}(0)\left(\ekl^1\eij^0(\bv) + \ekl^2\eij^{-1}(\bv) \right) \sqrt{a} dx+ \intO \tilde{A}^{ijkl,1}\ekl^1\eij^{-1}(\bv)dx 
\\ \nonumber
&\qquad + \intO B^{ijkl}(0)\left(\dekl^1\eij^0(\bv) + \dekl^2\eij^{-1}(\bv) \right) \sqrt{a} dx+ \intO \tilde{B}^{ijkl,1}\dekl^1\eij^{-1}(\bv)dx =0
\end{align}
Given $\beeta\in V_F(\omega)$, we define $\bv(\beeta)=(v_i(\beeta))$ as
\begin{align*}
v_\alpha(\beeta)&:= x_3 \left( 2b_\alpha^\sigma \eta_\sigma + \d_\alpha\eta_3 \right),\\
v_3(\beeta)&:=0,
\end{align*}
and take $\bv=\bv(\beeta)$ in the previous equation, leading to (see (\ref{eij_terminos_expansion}))
\begin{align}\nonumber
&\intO A^{ijkl}(0)\ekl^1\eij^0(\bv(\beeta)) \sqrt{a}dx + 4\intO  A^{\alpha 3\sigma 3}(0)\estres^2\left( b_\alpha^\tau\eta_\tau + \frac{1}{2} \d_\alpha\eta_3   \right) \sqrt{a} dx 
\\\nonumber
& \qquad + 4\intO \tilde{A}^{\alpha 3 \sigma 3,1}\estres^1\left( b_\alpha^\tau\eta_\tau + \frac{1}{2} \d_\alpha\eta_3   \right)dx 
\\ \nonumber
&\qquad +\intO B^{ijkl}(0)\dekl^1\eij^0(\bv(\beeta)) \sqrt{a}dx + 4\intO  B^{\alpha 3\sigma 3}(0)\destres^2\left( b_\alpha^\tau\eta_\tau + \frac{1}{2} \d_\alpha\eta_3   \right) \sqrt{a} dx 
\\ \label{ref4}
& \qquad + 4\intO \tilde{B}^{\alpha 3 \sigma 3,1}\destres^1\left( b_\alpha^\tau\eta_\tau + \frac{1}{2} \d_\alpha\eta_3   \right)dx =0,
\end{align}
for all $\beeta\in V_F(\omega), \aes$. On the other hand, let $p=2$ in  (\ref{ecuacion_orden_fuerzas}). Grouping the terms multiplied by $\var^2$ and using steps $(i)$ and $(v)$ we find that
\begin{align*}
&\intO A^{ijkl}(0) \left(\ekl^1\eij^1(\bv) + \ekl^2\eij^0(\bv) + \ekl^{3}\eij^{-1}(\bv)  \right)\sqrt{a}dx 
\\
&\qquad + \intO \tilde{A}^{ijkl,1}\left(\ekl^1\eij^0(\bv) + \ekl^2\eij^{-1}(\bv)   \right) dx +
\intO \tilde{A}^{ijkl,2} \ekl^1\eij^{-1}(\bv) dx
\\
& \qquad
+\intO  B^{ijkl}(0) \left(\dekl^1\eij^1(\bv) + \dekl^2\eij^0(\bv) + \dekl^{3}\eij^{-1}(\bv)  \right)\sqrt{a}dx 
\\
&\qquad + \intO\tilde{B}^{ijkl,1}\left(\dekl^1\eij^0(\bv) + \dekl^2\eij^{-1}(\bv)   \right) dx +
\intO \tilde{B}^{ijkl,2} \dekl^1\eij^{-1}(\bv) dx
\\
& \quad  = \intO f^{i,2} v_i \sqrt{a} dx + \intG h^{i,3}v_i \sqrt{a} d\Gamma ,
\end{align*}
for all $\bv\in V(\Omega), \aes$. Consider now any $\bv$ which can be identified with a function $\beeta\in V_F(\omega)$; hence by steps $(i)$, $(v)$ and (\ref{eij_terminos_expansion}) we have
\begin{align*}
&\intO A^{ijkl}(0) \ekl^1\eij^1(\beeta)\sqrt{a}dx + 4\intO A^{\alpha 3 \sigma 3}(0) \estres^2 \left(  b_\alpha^\tau\eta_\tau + \frac{1}{2} \d_\alpha\eta_3   \right) \sqrt{a}dx
\\
&\qquad + \intO \tilde{A}^{ijkl,1}\estres^1 \left(  b_\alpha^\tau\eta_\tau + \frac{1}{2} \d_\alpha\eta_3   \right) dx
\\
& \qquad
+\intO B^{ijkl}(0) \dekl^1\eij^1(\beeta)\sqrt{a}dx + 4\intO B^{\alpha 3 \sigma 3}(0) \destres^2 \left(  b_\alpha^\tau\eta_\tau + \frac{1}{2} \d_\alpha\eta_3   \right) \sqrt{a}dx
\\
&\qquad + \intO \tilde{B}^{ijkl,1}\destres^1 \left(  b_\alpha^\tau\eta_\tau + \frac{1}{2} \d_\alpha\eta_3   \right) dx
\\
& \quad  = \into p^{i,2} \eta_i \sqrt{a} dy , 
\end{align*}
 for all $\beeta\in V_F( \omega), \aes$, where $p^{i,2}$ is defined  in (\ref{p2}). By subtracting (\ref{ref4}), we obtain
\begin{align} \nonumber
&\intO A^{ijkl}(0) \ekl^1 \left(\eij^1(\beeta) - \eij^0(\bv(\beeta)) \right) \sqrt{a}dx 
 + \intO B^{ijkl}(0) \dekl^1 \left(\eij^1(\beeta) - \eij^0(\bv(\beeta)) \right) \sqrt{a}dx 
 \\ \label{ref5}
 & \quad  = \into p^{i,2} \eta_i \sqrt{a} dy ,
\end{align}
 for all $\beeta\in V_F( \omega), \aes.$ Now,  by step $(v)$ and (\ref{eij_terminos_expansion})  we have that
\begin{align*}
A^{ijkl}(0)\ekl^1 \left( \eij^1(\beeta) - \eij^0(\bv(\beeta))  \right)
&= A^{\alpha\beta\sigma\tau}(0)\est^1\left(\eab^1 (\beeta)- \eab^0(\bv(\beeta) \right)  \\
&\quad+ A^{\alpha\beta 33}(0)\edtres^1\left(\eab^1 (\beeta)- \eab^0(\bv(\beeta)) \right).
\end{align*}
We also have the analogous equality for the components of the viscosity tensor multiplying the time derivatives of the strain components. Moreover, by steps $(v)$ and $(vi)$ we have
\begin{align} \label{est_1_rst}
\est^1(t)= -x_3 \rst(\bxi^0(t)) \Forallt. 
\end{align}
Furthermore, by (\ref{eij_terminos_expansion}) we also find that
\begin{align*}
\eab^1(\beeta)-\eab^0(\bv(\beeta))&= x_3 \left( b^\sigma_{\beta|\alpha} \eta_\sigma + b_\alpha^\sigma b_{\sigma\beta}\eta_3  \right) \\
& \quad- x_3 \left(\d_\alpha(b_\beta^\tau \eta_\tau) + \d_\beta(b_\alpha^\sigma\eta_\sigma) + \d_{\alpha\beta}\eta_3 - \Gamma_{\alpha\beta}^\sigma \d_\sigma \eta_3 -2\Gamma_{\alpha\beta}^\sigma b_\sigma^\tau \eta_\tau   \right),
\end{align*}
and making some calculations we conclude that 
\begin{align*}
\eab^1(\beeta) - \eab^0(\bv(\beeta)) = - x_3 \rab(\beeta), \ \forall \beeta\in V_F(\omega).
\end{align*}
Therefore, the left-hand side of the equation (\ref{ref5}) leads to
\begin{align} \nonumber
&\intO A^{ijkl}(0) \ekl^1 \left(\eij^1(\beeta) - \eij^0(\bv(\beeta)) \right) \sqrt{a}dx 
 + \intO B^{ijkl}(0) \dekl^1 \left(\eij^1(\beeta) - \eij^0(\bv(\beeta)) \right) \sqrt{a}dx 
 \\ \nonumber
 & \quad  =\intO \left( \lambda a^{\alpha \beta} a^{\sigma \tau} + \mu (a^{\alpha \sigma}a^{\beta \tau} + a^{\alpha \tau}a^{\beta\sigma})  \right)\est^1\left( -x_3 \rab(\beeta)\right) \sqrt{a} dx 
 \\ \nonumber
 &\qquad + \intO \lambda a^{\alpha\beta}\edtres^1 \left(-x_3 \rab(\beeta)  \right)\sqrt{a} dx
 \\ \nonumber
 & \qquad + \intO \left( \theta a^{\alpha \beta} a^{\sigma \tau} + \frac{\rho}{2} (a^{\alpha \sigma}a^{\beta \tau} + a^{\alpha \tau}a^{\beta\sigma})  \right) \dest^1 \left( -x_3 \rab(\beeta)\right) \sqrt{a} dx
 \\ \label{ref6}
 &\qquad + \intO \theta a^{\alpha\beta}\dedtres^1 \left(-x_3 \rab(\beeta)  \right)\sqrt{a} dx .
\end{align}
Now, by the findings in step $(v)$, we have that (\ref{ref6}) leads to
\begin{align*}
&\intO \left( \left(\lambda - \frac{\theta}{\theta + \rho} \left( \theta \Lambda + \lambda \right)\right)a^{\alpha\beta}a^{\sigma \tau} + \mu\ten  \right) \est^1\left(-x_3 \rab(\beeta)  \right)\sqrt{a}dx
\\
&\qquad + \intO \left( \frac{\theta \rho }{\theta + \rho} a^{\alpha \beta} a^{\sigma \tau}  + \frac{\rho}{2}\ten  \right) \dest^1\left(-x_3 \rab(\beeta)  \right)\sqrt{a}dx
\\
& \qquad - \intO \frac{\left( \theta \Lambda\right)^2}{\theta + \rho} \int_0^t e^{-k(t-s)}a^{\sigma \tau} \est^1(s) ds \left(-x_3 a^{\alpha \beta} \rab(\beeta)  \right) \sqrt{a} dx,
\end{align*}
which using (\ref{est_1_rst}) is equivalent to
\begin{align*}
&\intO \frac{x_3^2}{2}\a \rst(\bxi^0)\rab(\beeta)\sqrt{a}dx + \intO \frac{x_3^2}{2}\b \rst(\dot{\bxi}^0)\rab(\beeta)\sqrt{a}dx 
 \\
 & \qquad- \int_0^te^{-k(t-s)}\intO \frac{x_3^2}{2}\c \rst(\bxi^0(s))\rab(\beeta)\sqrt{a}dx ds 
 \\ & \quad = \frac{1}{3}\into \a \rst(\bxi^0)\rab(\beeta)\sqrt{a}dy + \frac{1}{3}\into \b \rst(\dot{\bxi}^0)\rab(\beeta)\sqrt{a}dy 
  \\
  & \qquad- \frac{1}{3}\int_0^te^{-k(t-s)}\into \c \rst(\bxi^0(s))\rab(\beeta)\sqrt{a}dy ds, 
\end{align*}
 for all $\beeta\in V_F(\omega), \aes,$ where  $\a$ , $\b$ and $\c$ denote the contravariant components of the fourth order two-dimensional  tensors, defined in (\ref{tensor_a_bidimensional})--(\ref{tensor_c_bidimensional}). Hence, we have obtained the following variational problem:
 
  Find $\bxi^0:[0,T] \times\omega \longrightarrow \mathbb{R}^3$ such that
\begin{align}\nonumber
&\bxi^0(t)\in V_F(\omega) \forallt,
\\ \nonumber
&\frac{1}{3}\into \a \rst(\bxi^0)\rab(\beeta)\sqrt{a}dy + \frac{1}{3}\into \b \rst(\dot{\bxi}^0)\rab(\beeta)\sqrt{a}dy 
  \\ \nonumber
  & \qquad- \frac{1}{3}\int_0^te^{-k(t-s)}\into \c \rst(\bxi^0(s))\rab(\beeta)\sqrt{a}dy ds 
  \\ \label{et1}
& \quad= \into p^{i,2}\eta_i \sqrt{a}dy \ \forall \beeta\in V_F(\omega), \aes, 
\\ \nonumber
& \bxi^0(0,\cdot)=\bxi^0_0(\cdot).
\end{align}
 This problem will be known as the two-dimensional variational problem for a viscoelastic flexural shell.
 
\end{enumerate}

\end{proof}

\begin{remark}
The mathematical variational models found in (\ref{ec_paso4}) and in (\ref{et1}) show a long-term memory that takes into account the deformations in  previous times, represented by an integral on the time variable. Notice that the weight coefficient term makes the older strain states less influential than the newer ones. Analogous behavior has been presented in beam models for the bending-stretching of viscoelastic rods \cite{AV}, obtained by using asymptotic methods as well. Also, this kind of viscoelasticity has been described in \cite{DL,Pipkin}, for example.
\end{remark}

\section{Existence and uniqueness of the solution of the two-dimensional problems} \setcounter{equation}{0}\label{Existencia}

In what follows, we study the existence and uniqueness of solution of the two-dimensional limit problems found in the previous section: the membrane and flexural shell cases. To that aim, we first give the following result regarding the ellipticity of the fourth order two-dimensional tensors defined by their contravariant components in  (\ref{tensor_a_bidimensional})--(\ref{tensor_c_bidimensional}).

 \begin{theorem} 
 Let $\omega$  be a domain in $\mathbb{R}^2$, let $\btheta\in\mathcal{C}^1(\bar{\omega};\mathbb{R}^3)$ be an injective mapping such that the two vectors $\ba_\alpha=\d_\alpha\btheta$ are linearly independent at all points of $\bar{\omega}$, let $a^{\alpha\beta}$ denote the contravariant components of the metric tensor of $S=\btheta(\bar{\omega})$.  Let us consider the contravariant components of the scaled fourth order two-dimensional  tensors of the shell, $\a,\b,$
  defined in (\ref{tensor_a_bidimensional})--(\ref{tensor_b_bidimensional}). Assume that $\lambda\geq0$ and $\mu,\theta,\rho>0$. Then there exist two constants $c_e>0$ and $c_v>0$   independent of the variables and $\var$,  such that
 \begin{align} \label{tensor_a_elip}
   \sum_{\alpha,\beta}|t_{\alpha\beta}|^2\leq c_e \a(\by)t_{\sigma\tau}t_{\alpha\beta},
   \\\label{tensor_b_elip}
   \sum_{\alpha,\beta}|t_{\alpha\beta}|^2 \leq c_v \b(\by)t_{\sigma\tau}t_{\alpha\beta},
 \end{align}
 for all $\by\in\bar{\omega}$ and all $\bt=(t_{\alpha\beta})\in\mathbb{S}^2$.
 \end{theorem}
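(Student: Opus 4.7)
The plan is to exploit the structural similarity between the 2D tensors $\a$ and $\b$ and the 3D tensors whose ellipticity is stated in (\ref{elipticidadA_eps})--(\ref{elipticidadB_eps}), reducing everything to a single lower bound for the ``shear'' quadratic form built from $(a^{\alpha\beta})$. Both tensors have the form $C_1\, a^{\alpha\beta}a^{\sigma\tau} + C_2\, \ten$ with $C_1 \geq 0$ and $C_2 > 0$ under the sign assumptions $\lambda \geq 0$, $\mu,\theta,\rho > 0$: for $\a$ one reads off $C_1 = \frac{2\lambda\rho^2 + 4\mu\theta^2}{(\theta+\rho)^2}$ and $C_2 = 2\mu$, while for $\b$ one has $C_1 = \frac{2\theta\rho}{\theta+\rho}$ and $C_2 = \rho$. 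Since $(a^{\alpha\beta}a^{\sigma\tau})t_{\sigma\tau}t_{\alpha\beta} = (a^{\alpha\beta}t_{\alpha\beta})^2 \geq 0$, the dilation contribution can be dropped in a lower bound, and the problem reduces to controlling $\ten\, t_{\sigma\tau} t_{\alpha\beta}$ from below by $|\bt|^2$.

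Using the symmetry $t_{\sigma\tau} = t_{\tau\sigma}$, one immediately obtains $\ten\, t_{\sigma\tau} t_{\alpha\beta} = 2\, a^{\alpha\sigma} a^{\beta\tau} t_{\sigma\tau} t_{\alpha\beta}$. The core step is then to produce a constant $c_0 > 0$, independent of $\by \in \bar\omega$, such that
\begin{equation*}
a^{\alpha\sigma} a^{\beta\tau} t_{\sigma\tau} t_{\alpha\beta} \geq c_0 \sum_{\alpha,\beta} |t_{\alpha\beta}|^2
\end{equation*}
for every symmetric $\bt = (t_{\alpha\beta})$ and every $\by \in \bar\omega$.

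To prove this I would identify the left-hand side with $\mathrm{tr}\bigl((AT)^2\bigr)$, where $A = (a^{\alpha\beta}(\by))$ and $T = (t_{\alpha\beta})$. Writing $A = B^2$ with $B = A^{1/2}$ symmetric positive definite, cyclicity of the trace gives $\mathrm{tr}\bigl((AT)^2\bigr) = \mathrm{tr}\bigl((BTB)^2\bigr) = \|BTB\|_F^2$, since $BTB$ is symmetric. Applying the elementary estimate $\|BX\|_F \geq \lambda_{\min}(B)\|X\|_F$ twice (once with $X = TB$, and again, after transposition, with $X = T$) then yields $\|BTB\|_F^2 \geq \lambda_{\min}(A)^2 \|T\|_F^2$. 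A uniform lower bound $\inf_{\bar\omega} \lambda_{\min}(a^{\alpha\beta}) > 0$ follows from continuity of $a^{\alpha\beta}$ on the compact set $\bar\omega$ together with pointwise positive definiteness; equivalently, from the identity $\det(a^{\alpha\beta}) = 1/a$, the lower bound $a \geq a_0 > 0$ of Theorem \ref{Th_simbolos2D_3D}, and the uniform upper bound on $\mathrm{tr}(a^{\alpha\beta})$ provided by continuity.

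Combining the three steps, (\ref{tensor_a_elip}) holds with $c_e = (4\mu c_0)^{-1}$ and (\ref{tensor_b_elip}) with $c_v = (2\rho c_0)^{-1}$, both independent of $\by$, and trivially of $\var$ since $\a$ and $\b$ carry no $\var$-dependence. The only step with any real content is the uniform spectral lower bound on $(a^{\alpha\beta})$ over $\bar\omega$, a standard compactness argument that is also implicit in the 3D estimates (\ref{elipticidadA})--(\ref{elipticidadB}); everything specific to the viscoelastic setting is confined to checking the signs of $C_1$ and $C_2$ for both tensors, which is immediate from $\lambda \geq 0$ and $\mu, \theta, \rho > 0$.
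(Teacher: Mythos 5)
Your proof is correct, and it is essentially the route the paper intends: the paper gives no argument beyond the remark that the result follows as in Theorem 3.3-2 of Ciarlet's shell book, which is exactly your reduction — drop the nonnegative $a^{\alpha\beta}a^{\sigma\tau}$ part (using $\lambda\geq0$, $\mu,\theta,\rho>0$) and bound the remaining form by the uniform positive definiteness of $\bigl(a^{\alpha\beta}(\by)\bigr)$ on the compact set $\bar{\omega}$, which you make explicit via $\mathrm{tr}\bigl((AT)^2\bigr)=\|A^{1/2}TA^{1/2}\|_F^2\geq\lambda_{\min}(A)^2\|T\|_F^2$. One small slip in your parenthetical alternative: since $\det\bigl(a^{\alpha\beta}\bigr)=1/a$, a lower bound on $\lambda_{\min}\bigl(a^{\alpha\beta}\bigr)$ needs an \emph{upper} bound on $a$ over $\bar{\omega}$ (available by continuity and compactness), not the lower bound $a\geq a_0>0$ you cite — immaterial here, since your primary continuity-plus-compactness argument is complete.
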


\begin{remark}The proof of this result is straightforward following similar arguments as in  Theorem 3.3-2, \cite{Ciarlet4b}.

\end{remark}

We shall present the limit problems in a de-scaled form. The details of the convergence and the physical interpretation of the solutions for those problems are subject of forthcoming papers (\cite{eliptico,flexural,generalizada}). There we shall see that in fact, the subspace which plays the key role in differentiating viscoelastic membrane shells from viscoelastic flexural shells is $V_F(\omega)$ instead of $V_0(\omega)$, as happened in the elastic case (see \cite{Ciarlet4b}).
\subsection{Viscoelastic membrane shell}
Let us first consider that $V_F(\omega)=\{\bcero\}$.
In order to obtain a well posed problem we must consider a larger space, completion of $V(\omega)$ , which will be denoted by $V_M(\omega)$. Specifically, we will distinguish the different types of membranes  depending on the type of middle surface of the family of shells and the subset where the boundary condition of place is considered. For example, if the middle surface $S$ is elliptic and $\gamma=\gamma_0$, we take $V_M(\omega):=H^1_0(\omega)\times H^1_0(\omega)\times L^2(\omega)$. In this type of membranes it is verified the two-dimensional Korn's type inequality (see, for example, Theorem 2.7-3, \cite{Ciarlet4b}): there exists a constant $c_M=c_M(\omega,\btheta)$ such that
 \begin{align} \label{Korn_elipticas}
 \left( \sum_\alpha||\eta_\alpha||^2_{1,\omega} + ||\eta_3||_{0,\omega}^2       \right)^{1/2} \leq c_M \left( \sum_{\alpha,\beta} ||\gab(\beeta)||_{0,\omega}^2  \right)^{1/2} \ \forall \beeta\in V_M(\omega). 
 \end{align}
 Complete studies will be presented in detail in two forthcoming papers (\cite{eliptico,generalizada}). We can enunciate the de-scaled variational problem for a viscoelastic membrane shell:

\begin{problem}\label{problema_ab_eps}
 Find $\bxi^\var:[0,T] \times\omega \longrightarrow \mathbb{R}^3$ such that,
    \begin{align*}\nonumber
    & \bxi^\var(t,\cdot)\in V_M(\omega) \forallt,\\ \nonumber
   &\var\int_{\omega} \aeps\gst(\bxi^\var)\gab(\beeta)\sqrt{a}dy +\var\int_{\omega}\beps\gst(\dot{\bxi}^\var)\gab(\beeta)\sqrt{a}dy
   \\\nonumber
    & \qquad- \var\int_0^te^{-k(t-s)}\into \ceps \gst(\bxi^\var(s))\gab(\beeta)\sqrt{a}dy ds 
    \\ 
   &\quad=\int_{\omega}p^{i,\var}\eta_i\sqrt{a}dy \ \forall \beeta=(\eta_i)\in V_M(\omega), \aes,
    \\
    &\bxi^\var(0,\cdot)=\bxi^\var_0(\cdot),
   \end{align*}

   where,
   \begin{align*}
   &\gab(\beeta):= \frac{1}{2}(\d_\alpha\eta_\beta + \d_\beta\eta_\alpha) - \Gamma_{\alpha\beta}^\sigma\eta_\sigma -b_{\alpha\beta}\eta_3,
   \\
   & p^{i,\var}(t):=\int_{-\var}^{\var}\fb^{i,\var}(t)dx_3^\var +h_+^{i,\var}(t)+h_-^{i,\var}(t) \ \textrm{and} \ h_{\pm}^{i,\var}(t)=\bh^{i,\var}(t,\cdot,\pm \var),
   \end{align*}
  and where the contravariant components of the fourth order two-dimensional tensors $\aeps,$ $\beps, $ $\ceps$ are defined as  rescaled versions of  (\ref{tensor_a_bidimensional})--(\ref{tensor_c_bidimensional}). The space $V_M(\omega)$ denotes a space completion of $V(\omega)$ where the viscoelastic membrane problem is well posed (to be detailed in forthcoming papers).
\end{problem}

\begin{theorem} \label{Th_exist_unic_bid_cero}
Let $\omega$  be a domain in $\mathbb{R}^2$, let $\btheta\in\mathcal{C}^2(\bar{\omega};\mathbb{R}^3)$ be an injective mapping such that the two vectors $\ba_\alpha=\d_\alpha\btheta$ are linearly independent at all points of $\bar{\omega}$. Let $\fb^{i,\var}\in L^{2}(0,T; L^2(\Omega^\var)) $, $\bh^{i,\var}\in L^{2}(0,T; L^2(\Gamma_1^\var))$, where $\Gamma_1^\var:= \Gamma_+^\var\cup\Gamma_-^\var$. Let  $\bxi_0^\var\in V_M(\omega). $  Then the Problem \ref{problema_ab_eps}, has a unique solution  $\bxi^\var\in W^{1,2}(0,T;V_M(\omega))$.  In addition to that, if $\dot{\fb}^{i,\var}\in L^{2}(0,T; L^2(\Omega^\var)) $, $\dot{\bh}^{i,\var}\in L^{2}(0,T; L^2(\Gamma_1^\var))$, then $\bxi^\var\in W^{2,2}(0,T;V_M(\omega))$. 
\end{theorem}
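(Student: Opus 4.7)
The plan is to mirror the strategy used in the proof of Theorem \ref{teorema_existencia_bidimensional}, adapting it to the Hilbert space $V_M(\omega)$ instead of $\calQ$. First, I would equip $V_M(\omega)$ with an inner product whose associated norm is equivalent to the natural norm of the completion; this is where the two-dimensional Korn-type inequality (\ref{Korn_elipticas}) plays its role, since it guarantees that the seminorm $\beeta \mapsto \bigl(\sum_{\alpha,\beta}\|\gab(\beeta)\|_{0,\omega}^2\bigr)^{1/2}$ controls the full norm of $V_M(\omega)$ (for the particular geometry/boundary data, e.g.\ elliptic middle surface with $\gamma=\gamma_0$).

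Next, by the Riesz representation theorem I would introduce bounded linear operators $\tilde A,\tilde B,\tilde C : V_M(\omega)\to V_M(\omega)$ associated with the bilinear forms built from $\aeps$, $\beps$, $\ceps$ (the $\var$-rescaled versions of (\ref{tensor_a_bidimensional})--(\ref{tensor_c_bidimensional})), together with $\tilde{\bbf}(t)\in V_M(\omega)$ representing the right-hand side $\into p^{i,\var}(t)\eta_i\sqrt{a}dy$. Using the ellipticity estimates (\ref{tensor_a_elip})--(\ref{tensor_b_elip}) jointly with (\ref{Korn_elipticas}), both $\tilde A$ and $\tilde B$ are strongly monotone and Lipschitz-continuous; $\tilde C$ is merely Lipschitz-continuous. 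The hypotheses on $\fb^{i,\var}$ and $\bh^{i,\var}$ ensure $\tilde{\bbf}\in L^2(0,T;V_M(\omega))$, with $\dot{\tilde{\bbf}}\in L^2(0,T;V_M(\omega))$ under the extra regularity assumptions on the data.

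Then I would handle the memory integral by a fixed-point argument. Given $\theta \in \tilde V_M:=W^{1,2}(0,T;V_M(\omega))$, I would solve the auxiliary Cauchy problem
\begin{equation*}
\tilde B \dot{\bxi}_\theta(t) + \tilde A \bxi_\theta(t) = \tilde{\bbf}(t) + \tilde C\!\left(\int_0^t e^{-k(t-s)}\theta(s)\,ds\right),\qquad \bxi_\theta(0)=\bxi_0^\var,
\end{equation*}
whose unique solution in $\tilde V_M$ is given by Theorem \ref{teorema_existenciayunicidad}. Defining $\Psi\theta(t):=\bxi_\theta(t)$ and subtracting the equations for two inputs $\theta_1,\theta_2$, then testing with $\bxi_{\theta_1}-\bxi_{\theta_2}$, using strong monotonicity of $\tilde A$ and Lipschitz-continuity of $\tilde C$, integrating in time and applying Gronwall's inequality, I would obtain the contraction-in-powers estimate $\|\Psi^n\theta_1-\Psi^n\theta_2\|_{\tilde V_M}\le \kappa\|\theta_1-\theta_2\|_{\tilde V_M}$ with $\kappa<1$ for $n$ large; Banach's fixed-point theorem then furnishes a unique $\theta^*$, and $\bxi^\var:=\bxi_{\theta^*}$ is the sought solution in $W^{1,2}(0,T;V_M(\omega))$. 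Finally, the additional regularity $\bxi^\var\in W^{2,2}(0,T;V_M(\omega))$ under the extra hypotheses on $\dot\fb^{i,\var}$ and $\dot{\bh}^{i,\var}$ would follow by applying (a modified version of) Corollary \ref{Cor_ex_un_reg} to the fixed-point equation, exactly as in the closing lines of Theorem \ref{teorema_existencia_bidimensional}.

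The main obstacle, I expect, will not be the abstract fixed-point machinery (which is structurally identical to that of Theorem \ref{teorema_existencia_bidimensional}) but rather justifying that the operators $\tilde A$ and $\tilde B$ are genuinely strongly monotone on $V_M(\omega)$ after the completion: one needs the Korn-type inequality (\ref{Korn_elipticas}) to hold in the \emph{completed} space (so that the $\gab$-seminorm is actually a norm equivalent to that of $V_M(\omega)$), and this is precisely the point where the geometric classification of the middle surface $S$ and the boundary subset $\gamma_0$ intervenes. Since the statement assumes that $V_M(\omega)$ has been chosen as the appropriate completion making the membrane problem well posed, and the paper defers a full discussion to the companion works \cite{eliptico,generalizada}, I would invoke (\ref{Korn_elipticas}) as a standing hypothesis for this theorem and proceed with the rest of the argument verbatim.
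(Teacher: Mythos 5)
Your proposal is correct and follows essentially the same route as the paper: the authors also define the bilinear forms $a^\var,b^\var,c^\var$ on $V_M(\omega)$, invoke the ellipticity estimates (\ref{tensor_a_elip})--(\ref{tensor_b_elip}) together with the Korn-type inequality (\ref{Korn_elipticas}) to obtain strong monotonicity, and then cast the problem into the framework of (\ref{ecuacion_operadores})--(\ref{condicion_operadores}) so that the fixed-point/Gronwall argument of Theorem \ref{teorema_existencia_bidimensional} (and the regularity upgrade of Corollary \ref{Cor_ex_un_reg}) applies verbatim. Your closing caveat about Korn holding in the completed space $V_M(\omega)$ matches the paper's own stance, which defers that geometric discussion to the companion works.
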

\begin{proof}
 Let us consider the bilinear forms  $a^\var,b^\var,c^\var: V_M(\omega)\times V_M(\omega)\longrightarrow \mathbb{R}$ defined by,
\begin{align*}
a^\var(\bxi^\var,\beeta)&: = \var\int_{\omega} \aeps\gst(\bxi^\var)\gab(\beeta)\sqrt{a}dy,
\\
b^\var(\bxi^\var,\beeta )&:=\var\int_{\omega}\beps\gst({\bxi}^\var)\gab(\beeta)\sqrt{a}dy,
\\
c^\var(\bxi^\var,\beeta)&:=\var\into \ceps \gst(\bxi^\var)\gab(\beeta)\sqrt{a}dy  ,
\end{align*}
 for all $\bxi^\var,\beeta\in V_M(\omega) $ and for each $\var>0$. Therefore the Problem \ref{problema_ab_eps} can be cast into an analogous framework of the formulation (\ref{ecuacion_operadores})--(\ref{condicion_operadores}), since $p^{i,\var}\in L^2(0,T;L^2(\omega))$ and by the ellipticity of the two-dimensional tensors in (\ref{tensor_a_elip})--(\ref{tensor_b_elip}). Therefore, combining a Korn's type inequality (see (\ref{Korn_elipticas}) for the elliptic case) with similar arguments as in the proof of the Theorem \ref{teorema_existencia_bidimensional}, we  find  that the Problem \ref{problema_ab_eps} has uniqueness of solution and such that $\bxi^\var\in W^{1,2}(0,T;V_M(\omega))$. Moreover, with the additional regularity of $f^{i,\var}$ and $h^{i,\var}$, we conclude that $\bxi^\var\in W^{2,2}(0,T;V_M(\omega))$.
\end{proof}

\subsection{Viscoelastic flexural shell}

Let us consider now that the space $ V_F(\omega) $ contains non-zero functions. Therefore, we can enunciate the de-scaled variational problem for a viscoelastic flexural shell:

\begin{problem}\label{problema_flexural_eps}
 Find $\bxi^\var:[0,T] \times\omega \longrightarrow \mathbb{R}^3$ such that,
    \begin{align*}\nonumber
    & \bxi^\var(t,\cdot)\in V_F(\omega) \forallt,\\ \nonumber
   &\frac{\var^3}{3}\int_{\omega} \aeps\rst(\bxi^\var)\rab(\beeta)\sqrt{a}dy +\frac{\var^3}{3}\int_{\omega}\beps\rst(\dot{\bxi}^\var)\rab(\beeta)\sqrt{a}dy
    \\ \nonumber
      &- \frac{\var^3}{3}\int_0^te^{-k(t-s)}\into \c \rst(\bxi^\var(s))\rab(\beeta)\sqrt{a}dyds
   \\
   &\quad=\int_{\omega}p^{i,\var}\eta_i\sqrt{a}dy \ \forall \beeta=(\eta_i)\in V_F(\omega),\aes,
    \\
    &\bxi^\var(0,\cdot)=\bxi^\var_0(\cdot),
   \end{align*} 

   where,
   \begin{align*}
   &\rho_{\alpha\beta}(\beeta):= \d_{\alpha\beta}\eta_3 - \Gamma_{\alpha\beta}^\sigma \d_\sigma\eta_3 - b_\alpha^\sigma b_{\sigma\beta} \eta_3 + b_\alpha^\sigma (\d_\beta\eta_\sigma- \Gamma_{\beta\sigma}^\tau \eta_\tau) + b_\beta^\tau(\d_\alpha\eta_\tau-\Gamma_{\alpha\tau}^\sigma\eta_\sigma ) + b^\tau_{\beta|\alpha} \eta_\tau,
   \\
   & p^{i,\var}(t):=\int_{-\var}^{\var}\fb^{i,\var}(t)dx_3^\var+h_+^{i,\var}(t)+h_-^{i,\var}(t) \ \textrm{and} \ h_{\pm}^{i,\var}(t)=\bh^{i,\var}(t,\cdot,\pm \var),
   \end{align*}
 and where the contravariant components of the fourth order two-dimensional tensors $\aeps,$ $\beps, $ $\ceps$ are defined as rescaled versions of  (\ref{tensor_a_bidimensional})--(\ref{tensor_c_bidimensional}).  
\end{problem}

If $\btheta\in\mathcal{C}^3(\bar{\omega};\mathbb{R}^3),$ it is verified the following Korn's type inequality (see, for example, Theorem 2.6-4, \cite{Ciarlet4b}): there exists a constant $c=c(\omega, \gamma_0, \btheta)$ such that
 \begin{align} \label{Korn_flexural}
 \left( \sum_\alpha||\eta_\alpha||^2_{1,\omega} + ||\eta_3||_{2,\omega}^2       \right)^{1/2} \leq c \left( \sum_{\alpha,\beta} ||\rab(\beeta)||_{0,\omega}^2  \right)^{1/2} \ \forall \beeta\in V_F(\omega). 
 \end{align}
\begin{theorem} \label{Th_exist_unic_bid_dos}
Let $\omega$  be a domain in $\mathbb{R}^2$, let $\btheta\in\mathcal{C}^3(\bar{\omega};\mathbb{R}^3)$ be an injective mapping such that the two vectors $\ba_\alpha=\d_\alpha\btheta$ are linearly independent at all points of $\bar{\omega}$. Let $\fb^{i,\var}\in L^{2}(0,T; L^2(\Omega^\var)) $, $\bh^{i,\var}\in L^{2}(0,T; L^2(\Gamma_1^\var))$, where $\Gamma_1^\var:= \Gamma_+^\var\cup\Gamma_-^\var$. Let  $\bxi_0^\var\in V_F(\omega). $  Then the Problem \ref{problema_flexural_eps}, has a unique solution  $\bxi^\var\in W^{1,2}(0,T;V_F(\omega))$.  In addition to that, if $\dot{\fb}^{i,\var}\in L^{2}(0,T; L^2(\Omega^\var)) $, $\dot{\bh}^{i,\var}\in L^{2}(0,T; L^2(\Gamma_1^\var))$, then $\bxi^\var\in W^{2,2}(0,T;V_F(\omega))$. 
\end{theorem}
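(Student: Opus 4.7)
The plan is to mirror the proof strategy of Theorem \ref{Th_exist_unic_bid_cero} for the membrane case, replacing the change of metric tensor $\gab$ by the change of curvature tensor $\rab$, and the space $V_M(\omega)$ by $V_F(\omega)$. First, I would recast Problem \ref{problema_flexural_eps} in abstract form by introducing the bilinear forms $a^\var, b^\var, c^\var : V_F(\omega) \times V_F(\omega) \to \mathbb{R}$ defined by
\begin{align*}
a^\var(\bxi,\beeta) &:= \frac{\var^3}{3}\into \aeps \rst(\bxi)\rab(\beeta)\sqrt{a}\,dy, \\
b^\var(\bxi,\beeta) &:= \frac{\var^3}{3}\into \beps \rst(\bxi)\rab(\beeta)\sqrt{a}\,dy, \\
c^\var(\bxi,\beeta) &:= \frac{\var^3}{3}\into \ceps \rst(\bxi)\rab(\beeta)\sqrt{a}\,dy,
\end{align*}
together with the load functional $L^\var(t)(\beeta):=\into p^{i,\var}(t)\eta_i\sqrt{a}\,dy$, which belongs to $L^2(0,T;V_F(\omega)')$ by the hypotheses on $f^{i,\var}$ and $h^{i,\var}$ and the trace theorem.

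Next I would verify that $a^\var, b^\var, c^\var$ are continuous and that $a^\var, b^\var$ are strongly monotone on $V_F(\omega)$. Continuity follows from the regularity assumption $\btheta\in\mathcal{C}^3(\bar\omega;\mathbb{R}^3)$, which ensures that the Christoffel symbols $\Gamma^\sigma_{\alpha\beta}$, the curvature coefficients $b_{\alpha\beta},b^\sigma_\alpha$, and the covariant derivatives $b^\tau_{\beta|\alpha}$ appearing in the expression \eqref{rab} of $\rab$ are all elements of $\mathcal{C}^0(\bar\omega)$, together with the boundedness of the components of $\aeps,\beps,\ceps$. Strong monotonicity is the key step: combining the uniform ellipticity estimates \eqref{tensor_a_elip}--\eqref{tensor_b_elip} with the Korn-type inequality \eqref{Korn_flexural} on $V_F(\omega)$ produces constants $C_a,C_b>0$ such that $a^\var(\beeta,\beeta)\geq C_a\|\beeta\|^2$ and $b^\var(\beeta,\beeta)\geq C_b\|\beeta\|^2$ for every $\beeta\in V_F(\omega)$, where the norm is the natural $(H^1\times H^1\times H^2)$-norm inherited from the ambient product space.

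With these properties in hand, Problem \ref{problema_flexural_eps} falls into the abstract framework of \eqref{ecuacion_operadores}--\eqref{condicion_operadores}. An adaptation of Theorem \ref{teorema_existencia_bidimensional} (in which the Hilbert space $\mathcal{Q}$ is replaced by $V_F(\omega)$ and the three bilinear forms $a,b,c$ by $a^\var,b^\var,c^\var$) then yields a unique $\bxi^\var\in W^{1,2}(0,T;V_F(\omega))$ satisfying the problem. For the additional regularity, assuming $\dot{f}^{i,\var}\in L^2(0,T;L^2(\Omega^\var))$ and $\dot{h}^{i,\var}\in L^2(0,T;L^2(\Gamma_1^\var))$, one has $\dot{L}^\var\in L^2(0,T;V_F(\omega)')$; invoking the linear version of Corollary \ref{Cor_ex_un_reg} lifts the solution to $\bxi^\var\in W^{2,2}(0,T;V_F(\omega))$.

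The main obstacle is verifying that Theorem \ref{teorema_existencia_bidimensional}, stated for the tensor-valued space $\mathcal{Q}$, carries over to the displacement-valued space $V_F(\omega)$ when the bilinear forms are built from the change of curvature tensor rather than the change of metric tensor. This transfer is essentially cosmetic: the proof of Theorem \ref{teorema_existencia_bidimensional} only uses Hilbert space structure, strong monotonicity and continuity of the bilinear forms, and a Banach fixed-point argument to handle the convolution memory term—properties that are all preserved in the current flexural setting thanks precisely to the Korn inequality \eqref{Korn_flexural}, which transfers the ellipticity on $\rab$ to coercivity on the full norm of $V_F(\omega)$.
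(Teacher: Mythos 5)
Your proposal is correct and follows essentially the same route as the paper: define the bilinear forms $a^\var,b^\var,c^\var$ on $V_F(\omega)\times V_F(\omega)$ via the change of curvature tensor, obtain coercivity from the ellipticity estimates (\ref{tensor_a_elip})--(\ref{tensor_b_elip}) combined with the Korn-type inequality (\ref{Korn_flexural}), cast the problem into the abstract framework (\ref{ecuacion_operadores})--(\ref{condicion_operadores}) and repeat the fixed-point argument of Theorem \ref{teorema_existencia_bidimensional}, with the extra regularity following as in Corollary \ref{Cor_ex_un_reg}. The additional details you supply (continuity of the forms from $\btheta\in\mathcal{C}^3$, the load functional) only make explicit what the paper leaves implicit.
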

\begin{proof}
 Let us consider the bilinear forms $a^\var,b^\var,c^\var: V_F(\omega)\times V_F(\omega)\longrightarrow \mathbb{R}$  defined by,
\begin{align*}
a^\var(\bxi^\var,\beeta)&: = \frac{\var^3}{3}\int_{\omega} \aeps\rst(\bxi^\var)\rab(\beeta)\sqrt{a}dy,
\\
b^\var(\bxi^\var,\beeta )&:=\frac{\var^3}{3}\int_{\omega}\beps\rst({\bxi}^\var)\rab(\beeta)\sqrt{a}dy,
\\
c^\var(\bxi^\var,\beeta)&:=\frac{\var^3}{3}\into \ceps \rst(\bxi^\var)\rab(\beeta)\sqrt{a}dy ,
\end{align*}
for all $\bxi^\var,\beeta\in V_F(\omega) $ and for each $\var>0$. Therefore the Problem \ref{problema_flexural_eps} can be cast into an analogous framework of the formulation (\ref{ecuacion_operadores})--(\ref{condicion_operadores}), since $p^{i,\var}\in L^2(0,T;L^2(\omega))$  and by the ellipticity of the two-dimensional tensors in (\ref{tensor_a_elip})--(\ref{tensor_b_elip}). Therefore, combining a Korn's type inequality (see (\ref{Korn_flexural})) with similar arguments as in the proof of the Theorem \ref{teorema_existencia_bidimensional}, we  find that the Problem \ref{problema_flexural_eps} has uniqueness of solution and such that $\bxi^\var\in W^{1,2}(0,T;V_F(\omega))$. Moreover, with the additional regularity of $f^{i,\var}$ and $\bh^{i,\var}$, we conclude that $\bxi^\var\in W^{2,2}(0,T;V_F(\omega))$.
\end{proof}

\section{Conclusions} \setcounter{equation}{0} \label{conclusiones}

We have found limit two-dimensional  models for  viscoelastic membrane shells  and viscoelastic flexural shells. To this end we used the  asymptotic expansion method to identify the variational equations from the scaled three-dimensional viscoelastic shell problem. We have provided an analysis of the existence and uniqueness of solution for the three-dimensional problems and announced the corresponding results for the two-dimensional limit problems as well. Particularly interesting is that in the process of passing to the limit a long-term memory arises naturally (see (\ref{ec_paso4}) and  (\ref{et1})). Long-term memory is a well known phenomenon associated to a variety of viscoelastic materials  that takes into account the deformations of  previous times, represented by an integral on the time variable. Analogous behavior has been presented in beam models for the bending-stretching of viscoelastic rods \cite{AV}, obtained by using asymptotic methods as well.  Also, this kind of viscoelasticity has been described in \cite{DL,Pipkin}, for example.

 As the viscoelastic case differs from the elastic case on time dependent constitutive law and external forces, we must consider the possibility that these models generalize the elastic case (studied in \cite{Ciarlet4b}). However, as the reader can easily check, when the ordinary differential equation (\ref{ecuacion_casuistica}) was presented, we  had to use assumptions that make it impossible to consider the elastic case. For instance, we could try to reduce the viscoelastic model to the elastic case by neglecting the viscosity constants and considering the various functions involved to be stationary. We show in the Remark \ref{nota_desvio}, the last step where these arguments can be considered that, indeed, we would obtain the same models  obtained in \cite{Ciarlet4b} for the corresponding elastic cases. Nevertheless, in what follows, the viscosity coefficient $\theta$  can  not be zero, so  the same proof can not be followed from that point. Hence, the viscoelastic and elastic problems must be treated separately in order to reach reasonable and justified conclusions.

The asymptotic approaches need to be mathematically justified in order to ensure robust results. To this end,  guided by the formal analysis developed in this paper, a more deep and robust study including convergence theorems will be presented in forthcoming papers (\cite{eliptico,flexural,generalizada}), regarding the different cases that have appeared in this work.

 The formal asymptotic procedure made in this work has placed the two dimensional limit equations for the membrane case on spaces where the problems were not well posed, so we need to find completions for these spaces. This will be done by  taking into account the type of the middle surface of the family of shells and the subset where the boundary condition of place is considered. Therefore, on one hand, we shall study in \cite{eliptico} the case when $S$ is elliptic and when $\gamma_0=\gamma$, this is $V_0(\omega)= \{\bcero\}$ (which implies $V_F(\omega)=\{\bcero\}$). These are known as viscoelastic elliptic membrane shells. On the other hand, in \cite{generalizada} we shall consider the cases when the membrane is not elliptic or $\gamma_0\neq\gamma$ but still $V_F(\omega)= \{\bcero\}$. For these cases, additional spaces must be considered in order to obtain well posed problems. They are the so-called  viscoelastic generalized membranes, where we also distinguish the cases where $V_0(\omega)$ contains only the zero function (first kind) or not (second kind). Further, regarding the case where the space $V_F(\omega)$ contains non-zero functions, in \cite{flexural} we shall study the problem of  viscoelastic flexural shells.

\section*{Acknowledgements}
{\footnotesize \noindent This research was partially supported by Ministerio de Econom\'ia y Competitividad of Spain, under grants MTM2012-36452-C02-01 and MTM2016-78718-P, with the participation of FEDER.}


\section*{References}
\bibliographystyle{abbrv}

\end{document}